\newtheorem{teo}{Theorem}[section]
\newtheorem{lema}[teo]{Lemma}
\newtheorem{cor}[teo]{Corollary}
\newtheorem{rem}{Remark}[section]
\begin{document}

\title[A localized reaction-diffusion equation]{A nonlinear diffusion equation with reaction localized to the half-line}

\author{R. Ferreira and A. de Pablo}

\address{Ra\'{u}l Ferreira
\hfill\break\indent  Departamento de Matem\'{a}ticas,
\hfill\break\indent U.~Complutense de Madrid,
\hfill\break\indent
 28040 Madrid, Spain.
\hfill\break\indent  e-mail: {\tt raul$_-$ferreira@mat.ucm.es}}

\address{Arturo de Pablo
\hfill\break\indent  Departamento de Matem\'{a}ticas,
\hfill\break\indent U. Carlos III de Madrid,
\hfill\break\indent
 28911 Legan\'{e}s, Spain.
\hfill\break\indent  e-mail: {\tt arturop@math.uc3m.es}}

\maketitle

\begin{abstract}
We study the behaviour of  the solutions to the quasilinear heat equation  with a reaction restricted to a half-line
$$
u_t=(u^m)_{xx}+a(x) u^p,
$$
$m, p>0$ and $a(x)=1$ for $x>0$, $a(x)=0$ for $x<0$. We first characterize the global existence exponent $p_0=1$ and the Fujita exponent $p_c=m+2$. Then we pass to study the grow-up rate in the case $p\le1$ and the blow-up rate for $p>1$. In particular we show that the grow-up rate is different as for global reaction if $p>m$ or $p=1\neq m$.
\end{abstract}

\

\emph{In memoriam of our friend Ireneo Peral. Master of Mathematics}

\

\section{Introduction}

We consider the following Cauchy problem
\begin{equation}\label{1.1}
\left\{
\begin{array}{ll}
u_t=(u^m)_{xx}+a(x)u^p, \quad & x\in\mathbb R,\; t>0,\\
u(x,0)=u_0(x).
\end{array}\right.
\end{equation}
We take exponents $m,p>0$ and the coefficient is the characteristic function of a half-line,
$a(x)=\mathds{1}_{(0,\infty)}(x)$. The initial value $u_0\in L^1(\mathbb{R})\cap L^\infty(\mathbb{R})$ is assumed to be continuous and nonnegative, so that nonnegative solutions $u\ge0$ are considered. We are interested in characterizing and describing the phenomena of blow-up and grow-up for the solutions to \eqref{1.1} in terms of the parameters of the problem, the exponents $m$ and $p$ and the initial datum $u_0$. By a solution $u$ having blow-up we mean that there exists a finite time $T$ such that $u$ is well defined and finite for $t<T$ and
\begin{equation}
\lim_{t\to T^-}\|u(\cdot,t)\|_\infty=\infty.
\end{equation}
When $T=\infty$ we say that $u$ has grow-up.

The problem with global reaction  $a(x)=1$ has been deeply studied in the last years mainly concerning blow-up and $p>1$, see for instance the survey book \cite{SamarskiiBook87}, but also in relation to grow-up, and thus $p\le1$, see \cite{AE86,dPV91}. In fact  there can exist blow-up solutions only if $p>1$, and in that range small initial data produce global solutions if and only if $p>m+2$. The global solutions are unbounded if $p\le1$, i.e. they have grow-up, while they are globally bounded if $p>m+2$. The exponents $p_0=1$ and $p_c=m+2$ are called, respectively, global existence exponent and Fujita exponent.
For the related case in which the reaction coefficient is $a(x)=\mathds{1}_{(-L,L)}(x)$, $0<L<\infty$, the exponents are  $p_0=\max\{1,\frac{m+1}2\}$ and $p_c=m+1$, see \cite{BaiZhouZheng,FdPV06,Pinsky}.

The first result in the paper establishes precisely for which exponents and data we have such phenomena of blow-up or grow-up. We prove that the exponents are the same as for the case $a(x)=1$.
\begin{teo}\label{teo-exponents}\

\begin{enumerate}
\item If $0<p\le p_0=1$ all the solutions to problem \eqref{1.1} are globally defined and unbounded.
\item If $1<p\le p_c=m+2$ all the solutions blow up in finite time.
\item If $p>m+2$  solutions may blow up in finite time or not depending on the initial data. Global solutions are bounded.
\end{enumerate}
\label{main-teo}\end{teo}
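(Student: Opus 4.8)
The plan is to prove all three parts by the comparison principle, constructing tailored sub- and supersolutions for the degenerate equation $u_t=(u^m)_{xx}+a(x)u^p$. Since $a\le 1$, any supersolution of the homogeneous-reaction problem $u_t=(u^m)_{xx}+u^p$ is automatically a supersolution of \eqref{1.1}, so all known upper barriers transfer for free; subsolutions, on the other hand, must be supported inside $\{x>0\}$ in order to feel the full reaction $a\,u^p=u^p$.

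For part (1) I would first note that any spatially constant $\overline u(t)=V(t)$ solving the ODE $V'=V^{p}$ is a supersolution of \eqref{1.1}, because $(\overline u^{m})_{xx}=0$ and $a\,\overline u^{p}\le \overline u^{p}$; for $p\le 1$ this ODE is globally solvable (polynomial growth if $p<1$, exponential if $p=1$), so taking $V(0)\ge\|u_0\|_\infty$ yields global existence by comparison. For unboundedness I would build a subsolution of plateau type, $\underline u(x,t)=g(t)\,W\big((x-x_0)/L\big)$, with $W$ a fixed compactly supported profile sitting in $\{x>0\}$ and $g$ increasing: on the flat part of the plateau the diffusion term $L^{-2}g^{m}(W^{m})''$ is negligible, the inequality reduces to $g'\le g^{p}$, and the corresponding $g$ is unbounded. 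Since a nontrivial solution becomes strictly positive on an interval inside $\{x>0\}$ after some time, it eventually dominates such a small plateau, which then grows without bound. Controlling the sign-changing diffusion near the edges of the plateau by taking $L$ large is the one place where the degeneracy must be handled with care.

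For parts (2)--(3) the governing heuristic I would make rigorous is a balance of the mass injected by the reaction against the diffusive spreading: along a Barenblatt-type evolution the height decays like $t^{-1/(m+1)}$ over a support of width $t^{1/(m+1)}$, so $\frac{d}{dt}\int u\sim\int_0^\infty u^{p}\sim t^{(1-p)/(m+1)}$, whose time integral diverges exactly when $p\le m+2$ (logarithmically at the endpoint) and converges when $p>m+2$. This is precisely why the half-line keeps the whole-line exponent $m+2$ rather than the bounded-interval value $m+1$: the overlap of the spreading support with $\{x>0\}$ itself grows like $t^{1/(m+1)}$, supplying one extra power of $t$ in the width. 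Concretely, for part (2) I would construct a self-similar subsolution carrying this injected mass and show that any solution eventually dominates a rescaled copy of it, so that it grows unboundedly; once the solution is large, the superlinearity $p>1$ lets a growing plateau obeying $g'\le g^{p}$ force $\|u\|_\infty\to\infty$ in finite time. For part (3), large-data blow-up follows from the same plateau subsolution (valid for every $p>1$), while for small data I would take as supersolution a Barenblatt profile with slowly increasing amplitude $C(t)$, the growth of $C$ absorbing the reaction; since $\int^\infty t^{(1-p)/(m+1)}\,dt<\infty$ precisely for $p>m+2$, the factor $C(t)$ stays bounded and produces a global bounded solution. Boundedness of every global solution then follows by combining the finite total injected mass with the standard $L^{1}$--$L^{\infty}$ smoothing of the porous medium flow.

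The step I expect to be the main obstacle is the sharp blow-up claim of part (2) at and near the critical exponent. The difficulty is twofold: the subsolution must be dominated by \emph{arbitrarily small} initial data, so that even tiny solutions blow up, and it must capture the critical balance $p=m+2$ while the degenerate diffusion simultaneously leaks mass into $\{x<0\}$, where there is no reaction. Showing that this leak does not lower the Fujita exponent below $m+2$ is the heart of the matter, and it is what forces the two-stage \emph{grow-up then localize} construction rather than a single self-similar comparison.
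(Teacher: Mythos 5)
Your flat supersolution in part (1) and your small-data comparison in part (3) are exactly the paper's arguments, and your mass-balance heuristic correctly identifies why the half-line keeps the whole-line exponent $m+2$ rather than $m+1$. The genuine problem is the plateau mechanism that carries both your unboundedness claim in part (1) and your blow-up claim in part (2). A compactly supported profile $W$ necessarily has a region where $(W^m)''<0$, so the subsolution inequality for $\underline u=g(t)\,W(x/L)$ is not $g'\le g^{p}$ but rather $g'\le g^{p}-C L^{-2}g^{m}$: the plateau leaks mass by diffusion at a rate scaling like $g^{m}$. Whenever $m>p$ this loss eventually dominates the reaction gain, so a single plateau saturates at height of order $L^{2/(m-p)}$, and taking $L$ large only raises the ceiling; it does not remove it. For part (1) this is patchable (use a sequence of ever wider plateaux, or, as the paper does, observe that solutions of the problem with $a=\mathds{1}_{(-L,L)}$ studied in \cite{FdP} are subsolutions and already have grow-up). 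For part (2) in the range $1<p\le m$ it is fatal: no fixed-support plateau blows up when $p<m$, and Kaplan-type eigenfunction arguments also fail there because Jensen's inequality goes the wrong way on the $\int u^{m}\varphi$ term. The paper's Lemma 2.4 supplies the missing idea: a self-similar subsolution $(T-t)^{-1/(p-1)}f\bigl(x(T-t)^{-\beta}\bigr)$ with $\beta=\frac{p-m}{2(p-1)}\le0$, whose support shrinks as $t\to T$ and whose compactly supported profile with $f(0)=0$ is produced by a shooting and rescaling argument; this, combined with the penetration property, gives blow-up of \emph{all} data for $1<p\le m$. Your proposal contains no substitute for this construction.

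The second gap is the critical exponent. For $m<p<m+2$ your ``injected mass diverges'' computation is the right heuristic but not a proof (growing $L^{1}$ norm does not by itself force pointwise blow-up); the paper closes it with the Levine--Sacks concavity argument, showing the energy \eqref{energy} of a Barenblatt subsolution becomes negative in finite time precisely when $p<m+2$. At $p=m+2$ the mass integral diverges only logarithmically and no comparison with a fixed self-similar object can work; the paper uses Galaktionov's rescaling $v(\xi,\tau)=t^{\alpha}u(x,t)$, $\xi=xt^{-\alpha}$, $\alpha=\frac1{m+1}$, and then a nonexistence argument for the limiting profile via the flux $\mathcal{E}=(f^m)'+\alpha\xi f$, which is constant for $\xi<0$ and decreasing for $\xi>0$ --- this is exactly where the asymmetry of $a(x)$ bites, and one must separately exclude a locally bounded limit and a locally unbounded one. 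You correctly flag this case as the main obstacle, but ``grow-up then localize'' is a description of the difficulty, not a method: even after the solution is shown to be large on a large set, finite-time blow-up must still be produced by one of the mechanisms above, and at $p=m+2$ showing the solution does become large is itself the hard step.
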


The second question to deal with is the speed at which the unbounded  solutions tend to infinity, both in the grow-up and in the blow-up cases. For global solutions we show that these rates are not the natural ones given by the corresponding no diffusion ODE \eqref{flat}. This in fact gives an upper estimate of the grow-up rate by comparison,
\begin{equation}\label{guprate-a=1}
u(x,t)\le  \left\{
\begin{array}{ll}
ct^{\frac1{1-p}}, \qquad &p<1,\\
ce^t,&p=1.
\end{array}\right.
\end{equation}

We remark that when  $p<1$ the reaction function is not Lipschitz, and uniqueness does not necessarily hold, neither comparison, see \cite{AE86,dPV91}. In that case  we can use for comparison a maximal solution or a minimal solution, \cite{dPV91}.

In the case of global reaction $a(x)=1$, it is proved in \cite{AE86,dPV90} that the above is indeed the grow-up rate when $0<p<1$, that is
\begin{equation}
  \label{a=1-p<1}
u(x,t)\sim t^{\frac1{1-p}}
\end{equation}
uniformly in compact sets. By $f\sim g$ we mean $0<c_1\le f/g\le c_2<\infty$.

However, for $p=1$ it is well known, through and easy change of variables that eliminates the reaction, that $u(x,t)\sim
t^{-1/2}e^{t}$ if $m=1$ and
\begin{equation*}\label{decay-a=1}
u(x,t)\sim
e^{\gamma t},\qquad \gamma=\min\{1,\frac{2}{m+1}\},
\end{equation*}
when $m\neq1$, for $t$ large uniformly in compact sets of $\mathbb{R}$, see \cite{vazquez} .

On the other hand, when $a(x)=\mathds{1}_{(-L,L)}$ it is proved in \cite{FdP} that  estimate \eqref{guprate-a=1} is far from being sharp in most of the cases. In particular
$$
u(x,t)\sim
\left\{
\begin{array}{ll}
t^{\frac{1}{m+1-2p}},\qquad & \text{if } p\le 1<m,\\
t^{\frac{1}{1-p}},\qquad & \text{if } m\le p<1,\\
e^t,&  \text{if } m<p=1,
\end{array}\right.
$$
uniformly in compact sets in the first case, only for $|x|<L$ in the last two cases. For $|x|>L$ the rate is different in the case $p>m$, namely
$$
u(x,t)\sim t^{\frac{1}{1-m}}.
$$
In the limit case of linear diffusion and linear reaction, $m=p=1$, it holds
$$
\lim_{t\to\infty}\frac{\log u(x,t)}t=\omega,
$$
where $\omega=\omega(L)\in(0,1)$, $\lim_{L\to\infty}\omega(L)=1$.

For our problem \eqref{1.1} we show that the rate is the same as for global reaction only if $p\le m$ with $p<1$  or $p=m=1$; it is the same as for $a(x)=\mathds{1}_{(-L,L)}$ if $p>m$, and strictly in between of those two problems if $p=1<m$. Again the rate is different for $p>m$ inside or outside the support of the reaction coefficient $a(x)$.

\begin{teo}\label{teo-guprates-p=1} Let  $u$ be a solution to problem \eqref{1.1} with $p=1$.

\begin{enumerate}
\item If $m>1$ then $u(x,t)\sim e^{\alpha t}$ uniformly in compact sets of $\mathbb{R}$, where $\alpha\in(1/m,2/(m+1))$ depends on the behaviour of $u_0$ at infinity.
\item If $m=1$ then $u(x,t)\sim e^t$ uniformly in compact sets of $\mathbb{R}$.
\item If $m<1$ then $u(x,t)\sim e^t$ uniformly in compact sets of $\mathbb{R}^+$ and $u(x,t)\sim t^{\frac1{1-m}}$ uniformly in compact sets of $\mathbb{R}^-$, provided $u_0(x)\sim |x|^{\frac{-2}{1-m}}\left( \log|x|\right)^{\frac{1}{1-m}}$ for $x\sim -\infty$.
\end{enumerate}

\end{teo}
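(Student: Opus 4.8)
The plan is to prove all three cases by comparison, after trading the linear reaction for a nonlinear time change. I would set $v=e^{-t}u$ and introduce the new time $\tau$ by $d\tau=e^{(m-1)t}\,dt$; a direct computation then gives $v_\tau=(v^m)_{xx}$ on $\{x>0\}$ and $v_\tau=(v^m)_{xx}-e^{(1-m)t}v$ on $\{x<0\}$, that is, the pure diffusion equation together with a sink on the reaction-free half-line. The trichotomy in $m$ is exactly the behaviour of the rescaled time and of this sink: for $m>1$ one has $\tau\to\infty$ and the sink coefficient $e^{(1-m)t}\to0$; for $m=1$ one has $\tau=t$ and constant sink coefficient $1$, so $v_t=v_{xx}-\mathds{1}_{(-\infty,0)}(x)v$; for $m<1$ one has $\tau\to\tau_\infty=\tfrac1{1-m}$ in finite rescaled time while the sink blows up. Since $u\sim e^{\alpha t}$ is equivalent to the power decay $v\sim\tau^{-\sigma}$ with $\sigma=\frac{1-\alpha}{m-1}$ when $m>1$, and to $v$ stabilizing to a positive profile when $m<1$, each grow-up rate is read off from the large-$\tau$ (respectively $\tau\to\tau_\infty$) behaviour of the diffusion problem for $v$.

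I would first settle the upper bounds. The bound $u\le ce^t$ is already given by \eqref{guprate-a=1}, which suffices for case (2) and for the $\mathbb{R}^+$ part of case (3). For $m>1$ a sharper supersolution is required, and I would use the self-similar form $\bar u(x,t)=e^{\alpha t}G(xe^{-\beta t})$ with $\beta=\frac{(m-1)\alpha}2$, chosen so that $\partial_t$, diffusion and reaction all scale like $e^{\alpha t}$; taking $G$ to be a supersolution of the resulting stationary profile inequality that dominates $u_0$ and carries the prescribed tail yields $u\le Ce^{\alpha t}$ by comparison. For the $\mathbb{R}^-$ part of case (3) I would instead build an algebraic supersolution adapted to the heavy datum $u_0(x)\sim|x|^{-2/(1-m)}(\log|x|)^{1/(1-m)}$, the exponents being forced by the fast-diffusion scaling on $\{x<0\}$.

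For the lower bounds I would construct subsolutions realizing each rate. In case (2) the clean device is the expanding-domain eigenfunction: on $(0,R)\subset\{x>0\}$ the function $\varepsilon e^{(1-(\pi/R)^2)t}\sin(\pi x/R)$, extended by zero, is a subsolution of $u_t=u_{xx}+u$ with convex kinks at the endpoints, so letting $R=R(t)\to\infty$ slowly gives $\liminf_{t\to\infty}t^{-1}\log u(x,t)\ge1$ on compacts of $\mathbb{R}^+$; one then propagates this to $\mathbb{R}^-$ by comparison with the pure-diffusion equation fed by the growing interface values, identifying the exponential order $e^t$ (subexponential corrections being immaterial for this rate). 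In cases (1) and (3) I would use self-similar subsolutions of the same type as the supersolutions: for (3) the point is that on $\{x>0\}$ the rescaled time saturates, so $v$ converges to a strictly positive fast-diffusion profile and $u\sim e^t$ there, whereas on $\{x<0\}$ the prescribed logarithmic tail drives a self-similar fast-diffusion subsolution producing the algebraic rate $t^{1/(1-m)}$.

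The hard part will be the precise value of $\alpha$ in case (1) and its dependence on $u_0$ at infinity. On $\{x>0\}$ the function $v$ obeys the porous medium equation on a half-line with an outflux at $x=0$ created by the (asymptotically vanishing) sink on the left, so the decay exponent $\sigma$ interpolates between two extremes. When $u_0$ decays fast the left region drains $v$ so efficiently that the relevant profile is the dipole solution with $v(0,\tau)\to0$, whose conserved first moment forces $\sigma=1/m$, i.e. $\alpha=1/m$; when $u_0(x)\sim x^{-\gamma}$ with $\gamma\to1^+$ the persistent tail sustains the solution and $\sigma\to\frac1{m+1}$, i.e. $\alpha\to\frac2{m+1}$, the whole-line source rate. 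Matching a profile $v=\tau^{-\sigma}F(x\tau^{-\beta})$ to the frozen tail, $\sigma=\beta\gamma$, together with the scaling $2\beta=1-(m-1)\sigma$, gives $\alpha=\frac2{\gamma(m-1)+2}$, which lies strictly between $1/m$ and $\frac2{m+1}$ for $\gamma\in(1,2)$, the dipole rate $1/m$ acting as a floor for faster decay. Turning this heuristic into rigorous two-sided bounds is the delicate step: it requires barriers that simultaneously reproduce the interior self-similar profile, the correct tail, and the outflux behaviour at $x=0$ coming from the coupling to the left region, together with a careful check that these profiles are genuine sub- and supersolutions across the interface $x=0$. Throughout, the linearity of the reaction ($p=1$) is what guarantees the comparison principle on which the whole argument rests.
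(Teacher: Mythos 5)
Your treatment of cases (2) and (3) is essentially the paper's: the upper bound $u\le ce^{t}$ from \eqref{flat}, explicit separated--variables subsolutions supported in $\mathbb{R}^+$ for the lower bound (the paper uses a fixed trigonometric/exponential profile and translates; your expanding eigenfunction achieves the same $e^{(1-\varepsilon)t}$ bound), and, for $x<0$ when $m<1$, comparison with multiples of the exponential self-similar solution of the fast diffusion equation whose profile has the tail $|\xi|^{-2/(1-m)}(\log|\xi|)^{1/(1-m)}$. Your formula $\alpha(\gamma)=\frac{2}{2+\gamma(m-1)}$ for $u_0\sim x^{-\gamma}$ also coincides with the paper's, and the change of variables $v=e^{-t}u$, $d\tau=e^{(m-1)t}\,dt$ is a harmless reformulation.

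The genuine gap is in case (1), exactly at the point you yourself flag as delicate, and your heuristic there gives a wrong answer. You claim that for rapidly decaying (e.g.\ compactly supported) data the rate saturates at the dipole exponent $\alpha=1/m$, on the grounds that the left half-line drains the solution so efficiently that $v(0,\tau)\to0$ and the first moment is conserved. This is false: the solution does not vanish at $x=0$ (it grows at the same exponential order there), and the relevant limit object is a self-similar solution whose support extends to $-K_-e^{\beta t}$ on the left, not a dipole anchored at the origin. The paper proves that the saturation exponent is a value $\alpha_*$ \emph{strictly} inside $(1/m,2/(m+1))$, determined by the nonexplicit matching condition $\lambda_-(\alpha_*)=\lambda_+(\alpha_*)$ between two shooting problems, one for the profile on $\xi>0$ (with reaction, problem \eqref{psi}) and one on $\xi<0$ (without, problem \eqref{phi}); establishing the existence and ordering of $\lambda_\pm(\alpha)$ requires the phase-plane analysis in the variables \eqref{variable} (Lemmas \ref{lem-psi}, \ref{lem-phi} and \ref{lema-alpha*}). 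The paper even checks that at $\alpha=1/m$ every profile reaches zero with nonvanishing flux, i.e.\ one only obtains subsolutions, so the true rate is strictly larger than $e^{t/m}$. Consequently your range $\gamma\in(1,2)$ for the intermediate regime is also incorrect: it must be $(1,\gamma_*)$ with $\gamma_*=\frac{2(1-\alpha_*)}{(m-1)\alpha_*}<2$. Without the two-sided compactly supported self-similar solution $w_*$ (for the lower bound) and the positive profiles $w_\lambda$ with $\lambda_+(\alpha)<\lambda<\lambda_-(\alpha)$ for $\alpha>\alpha_*$ (for the upper bound), item (1) of the theorem — in particular the assertion $\alpha\in(1/m,2/(m+1))$ — is not proved.
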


\begin{teo}\label{teo-guprates-p<1} Let  $u$ be a solution to problem \eqref{1.1} with $p<1$.

\begin{enumerate}
\item If $m\ge p$ then $u(x,t)\sim t^{\frac1{1-p}}$ uniformly in compact sets of $\mathbb{R}$.
\item If $m<p$ then $u(x,t)\sim t^{\frac1{1-p}}$ uniformly in compact sets of $\mathbb{R}^+$ and $u(x,t)\sim t^{\frac1{1-m}}$ uniformly in compact sets of $\mathbb{R}^-$, provided $u_0(x)\sim |x|^{\frac{-2}{1-m}}$ for $x\sim -\infty$.
\end{enumerate}
\end{teo}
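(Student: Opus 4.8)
The plan is to bracket $u$ between sub- and supersolutions that separate in the time variable and that reflect the two competing mechanisms: reaction–diffusion balance on $\mathbb{R}^+$, giving the ODE rate $t^{1/(1-p)}$, and pure porous medium dynamics on $\mathbb{R}^-$, giving the separable rate $t^{1/(1-m)}$. Since the reaction $u^p$ is not Lipschitz at $u=0$ when $p<1$, I would first record that every grow-up solution is bounded below away from zero on compact sets for large $t$ (by the spreading of positivity out of the reaction region), so that comparison applies on the relevant domains; where a subsolution reaches $u=0$ I would instead argue with the maximal and minimal solutions as in \cite{dPV91}. The upper and lower bounds are then treated separately on each half-line.

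For the upper bounds, the estimate $u(x,t)\le c\,t^{1/(1-p)}$ on all of $\mathbb{R}$ is already furnished by \eqref{guprate-a=1} through comparison with the global-reaction problem $a\equiv1$; this settles the upper bound in part (1) and on $\mathbb{R}^+$ in part (2). The delicate point is the sharper bound $u(x,t)\le c\,t^{1/(1-m)}$ on compact sets of $\mathbb{R}^-$ in part (2). On $\{x<0\}$ the equation is the pure porous medium equation, and I would dominate $u$ by the separable supersolution $\bar u(x,t)=(t+t_0)^{1/(1-m)}G(x)$, where $G>0$ solves the profile equation $(G^m)''=\frac1{1-m}G$ on $(-\infty,0)$ with $G(x)\to+\infty$ as $x\to0^-$ and $G(x)\sim c_\infty|x|^{-2/(1-m)}$ as $x\to-\infty$. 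The blow-up of $G$ at the interface is precisely what lets a supersolution growing only like $t^{1/(1-m)}$ on compacts nonetheless dominate the boundary value $u(0,t)\le c\,t^{1/(1-p)}$, using $m<p$; matching the tail of $G$ with the hypothesis $u_0(x)\sim|x|^{-2/(1-m)}$ and taking $t_0$ large gives $\bar u(\cdot,0)\ge u_0$, and comparison yields the claim.

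For the lower bounds I would use separable-in-time subsolutions on each side. On $\mathbb{R}^+$ I look for $\underline u(x,t)=t^{1/(1-p)}\phi(x)$; since the diffusion term scales as $t^{m/(1-p)}$ and the reaction as $t^{p/(1-p)}$, for large $t$ the subsolution inequality reduces, after taking $\phi$ bounded by the flat-solution constant $(1-p)^{1/(1-p)}$ on a long interval and cutting off smoothly, to a manageable ODE inequality in which the sign of $m-p$ governs whether diffusion helps or is negligible; this gives $u\ge c\,t^{1/(1-p)}$ on compacts of $\mathbb{R}^+$ in both parts. On $\mathbb{R}^-$ I distinguish the two regimes. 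In part (1), $m\ge p$, the penetration length $L(t)\sim t^{(m-p)/(2(1-p))}$ does not shrink, so the forcing from the interface reaches every compact set; a subsolution $\underline u=t^{1/(1-p)}\psi(x)$ with $\psi(0)$ small and $\psi$ positive on a compact interval transmits the rate $t^{1/(1-p)}$ into $\mathbb{R}^-$. In part (2), $m<p$, I instead use the separable subsolution $\underline u(x,t)=(t+t_0)^{1/(1-m)}H(x)$, with $H\ge0$ solving $(H^m)''=\frac1{1-m}H$ on $(-\infty,0)$, $H(0)=0$, and $H(x)\sim c_0|x|^{-2/(1-m)}$ at $-\infty$; the vanishing boundary value makes $\underline u(0,t)\le u(0,t)$ automatic, the tuned tail of $u_0$ guarantees $\underline u(\cdot,0)\le u_0$ for suitable $t_0$, and comparison on $\{x<0\}$ gives $u\ge c\,t^{1/(1-m)}$ on compacts of $\mathbb{R}^-$.

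I expect the main obstacle to be the construction and control of the singular separable profiles $G$ and $H$ — in particular producing the solution of $(G^m)''=\frac1{1-m}G$ that blows up like $(-x)^{-2/(1-m)}$ at the interface while decaying like $|x|^{-2/(1-m)}$ at $-\infty$, and verifying that the comparison on the half-line $\{x<0\}$ is legitimate across the singularity at $x=0$ and near the free boundary of the degenerate diffusion. The compatibility of these tails with the hypothesis $u_0(x)\sim|x|^{-2/(1-m)}$ is exactly what fixes the constants and forces the upper and lower separable rates to coincide.
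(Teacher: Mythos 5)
Your upper bounds are essentially right and close to the paper's: the bound $u\le ct^{1/(1-p)}$ is the flat supersolution \eqref{guprate-a=1}, and your separable supersolution on $\mathbb{R}^-$ in part (2) does exist, because the profile equation $(G^m)''=\frac{1}{1-m}G$ has the explicit solution $G(x)=C_m|x|^{-2/(1-m)}$, which is simultaneously singular at $x=0^-$ and has the required tail at $-\infty$; the paper instead multiplies the time-monotone self-similar solution $W(x,t)=t^{1/(1-p)}f(xt^{(p-m)/(2(1-p))})$ from \cite{FdP} by a constant $A>1$, which is essentially the same device. The genuine gap is your lower bound in part (1) when $m>p$. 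A time-separable subsolution $\underline u=t^{1/(1-p)}\psi(x)$ with $\psi$ supported in a fixed compact interval cannot satisfy the subsolution inequality for all large $t$: dividing by $t^{p/(1-p)}$ it reads $\alpha\psi\le t^{\frac{m-p}{1-p}}(\psi^m)''+\mathds{1}_{\{x>0\}}\psi^p$, and any nontrivial compactly supported $\psi$ has $(\psi^m)''<0$ on a set of positive measure (near its maximum), where the dominant diffusion term then drives the right-hand side to $-\infty$ as $t\to\infty$. Unbounded convex profiles do not help either, since then $\underline u(\cdot,t_1)\not\le u(\cdot,t_1)$ for the bounded solution $u$. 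This is exactly why the paper's subsolution is genuinely self-similar, $w=t^{\alpha}f(xt^{\beta})$ with $\alpha=\frac1{1-p}$ and $\beta=-\frac{m-p}{2}\alpha<0$: the expanding support makes $m\alpha+2\beta=p\alpha=\alpha-1$, rebalancing diffusion against reaction and $w_t$, and the profile is built by gluing four explicit pieces (zero, two parabolas in $f^m$, and a solution of the profile ODE). You correctly identified the penetration length $t^{(m-p)/(2(1-p))}$ but then discarded it; it must enter the ansatz itself.

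A second, smaller flaw: in part (2) the profile $H$ you want for the lower bound on $\mathbb{R}^-$, with $(H^m)''=\frac{1}{1-m}H$ on $(-\infty,0)$, $H(0)=0$, $H>0$ and $H\to0$ at $-\infty$, cannot exist: $H^m$ would be a nonnegative convex function tending to zero at both ends of the half-line, hence identically zero. The repair is easy and brings you back to the paper's scheme: use a profile that is positive (or singular only at some $\delta>0$), e.g.\ $H(x)=c(\delta-x)^{-2/(1-m)}$; the boundary comparison at $x=0$ is then supplied not by $H(0)=0$ but by the already-established lower bound $u(0,t)\ge ct^{1/(1-p)}\gg t^{1/(1-m)}$, together with positivity of $u$ for $t>0$ (infinite speed of propagation, $m<1$) to start the comparison. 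The paper does precisely this, taking $A W$ with $A<1$ small as subsolution of the one-phase problem on $\{x<1\}$ with boundary data $u(1,t)\ge ce^{t}$ (respectively $ct^{1/(1-p)}$). With these two repairs your outline coincides in substance with the paper's proof.
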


\

We compare in the next table the grow-up rates for the three problems, with global recation $a(x)=1$, localized reaction $a(x)=\mathds{1}_{(-L,L)}$, and reaction confined to the half-line $a(x)=\mathds{1}_{(0,\infty)}$.

\

\begin{center}

\begin{tabular}{|c|c|c|c|c|c|c|}
\hline
\multirow{2}{1cm}{}&\multicolumn{3}{c|}{$\mathbf{p=1}$}&\multicolumn{3}{c|}{$\mathbf{p<1}$} \\
\cline{2-7}
 &$m>1$ &$m=1$ &$m<1$&$m>p$&$m=p$&$m<p$ \\
\hline
$\mathbb{R}$  &$e^{ct}$ &$e^t$ &$e^t$&$t^a$ &$t^a$ &$t^a$ \\
\hline
$(-L,L)$ &$t^a$&$e^{\omega t}$ &$e^t\,/\,t^b$&$t^d$&$t^d$&$t^a\,/\,t^b$\\
\hline
$(0,\infty)$  &$e^{\alpha t}$& $e^t$& $e^t\,/\,t^b$&$t^a$ &$t^a$&$t^a\,/\,t^b$\\
\hline
\end{tabular}
\end{center}

\

The exponents are
$$
\begin{array}{l}
a=\dfrac1{1-p},\quad b=\dfrac1{1-m},\quad c=\dfrac2{m+1},\quad d=\dfrac1{m+1-2p},  \\
\omega<1 \;\text{ depends on } L,  \\
\alpha\le\alpha^*(m)<c \; \text{ depends on the behaviour of $u_0$ at infinity}.
\end{array}
$$
In the case $p>m$ we have two different rates, inside or outside the support of $a(x)$.

\

As for blow-up, the rate at which the solutions approach infinity in a finite time  has been studied for the case of global reaction under different conditions on the initial datum and exponents, with special care in the multidimensional case, see \cite{SamarskiiBook87} and the references therein. For dimension one, as is our situation, any solution with blow-up at time $t=T$ satisfies, for $t$ close to $T$,
$$
  \|u(\cdot,t)\|_\infty\sim (T-t)^{-\frac1{p-1}}.
  $$

For localized reaction $a(x)=\mathds{1}_{(-L,L)}$ the rates have been established in \cite{BaiZhouZheng,FdPV06}, giving a different rate depending on $p$ being bigger or smaller than $m$,
$$
\|u(\cdot,t)\|_\infty\sim (T-t)^{-\gamma},\qquad \gamma=\max\{\frac1{p-1},\frac1{2p-m-1}\}.
$$
In addition the property $\partial_tu\ge0$ is required in the proof of this result.

We prove here for problem \eqref{1.1} that the rate is the same as for global reaction, assuming again $\partial_tu\ge0$, but this is required only above the Fujita exponent, i.e. for $p>m+2$.

\begin{teo}\label{teo-buprates}
   Let $u$ be a solution to problem \eqref{1.1} with $p>1$ such that becomes infinity for $t\to T^-$, and assume further that $\partial_tu\ge0$ if $p>m+2$. Then
\begin{equation}\label{buprate}
  \|u(\cdot,t)\|_\infty\sim (T-t)^{-\frac1{p-1}}.
\end{equation}
\end{teo}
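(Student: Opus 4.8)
The plan is to prove the two matching one–sided bounds for $M(t):=\|u(\cdot,t)\|_\infty$, the lower one being elementary and valid for every $p>1$, the upper one being the substantial part. For the lower bound I would argue directly on $M$. Since $M$ is locally Lipschitz, for a.e. $t$ one has $M'(t)=u_t(x_0(t),t)$ at a maximizer $x_0(t)$; there $u_x=0$ and $u_{xx}\le0$, so $(u^m)_{xx}=mu^{m-1}u_{xx}\le0$, and using $0\le a\le1$ the equation \eqref{1.1} gives $M'(t)\le a(x_0)u^p(x_0)\le M^p$. Integrating $M'\le M^p$ from $t$ to $T$ and using $M(t)\to\infty$ yields $M(t)\ge[(p-1)(T-t)]^{-1/(p-1)}$. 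This step needs neither monotonicity nor any information on the location of the maximum.

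For the upper bound I would reduce matters to the pointwise inequality $u_t\ge\varepsilon u^p$ in the reaction region $\{x>0\}$, for some small $\varepsilon>0$ and $t$ close to $T$. Indeed, for a.e. $t$ near $T$ the maximum cannot be attained where $a=0$ (there $M'(t)=u_t=(u^m)_{xx}\le0$, incompatible with $M\to\infty$), so $x_0(t)>0$; then the pointwise bound gives $M'(t)=u_t(x_0,t)\ge\varepsilon u^p(x_0,t)=\varepsilon M^p$, and integrating $M'\ge\varepsilon M^p$ produces $M(t)\le[\varepsilon(p-1)(T-t)]^{-1/(p-1)}$, which together with the lower bound is exactly \eqref{buprate}.

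To obtain $u_t\ge\varepsilon u^p$ I would run a Friedman--McLeod argument for $J:=u_t-\varepsilon u^p$. Differentiating \eqref{1.1} in $t$ in $\{x>0\}$, substituting $u_t=J+\varepsilon u^p$, and expressing $(u^{m+p-1})_{xx}$ through $(u^m)_{xx}=u_t-u^p$ leads to
$$J_t=\bigl(mu^{m-1}J\bigr)_{xx}+\bigl[p+\varepsilon(m-1)\bigr]u^{p-1}J+\varepsilon\frac{(m+p-1)(p-1)}{m}\,u^{p-1-m}\bigl((u^m)_x\bigr)^2+\varepsilon(1-\varepsilon)(1-m)u^{2p-1}.$$
The gradient term is nonnegative. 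When $m\le1$ the last source term is nonnegative as well, so on any strip $[t_0,t_1]\subset[t_0,T)$, where $u$ is bounded and the coefficients are controlled, the maximum principle gives $J\ge0$ once $J\ge0$ on the parabolic boundary. When $m>1$ that source term has the wrong sign and must be absorbed by a refinement of the auxiliary function together with the available monotonicity $u_t\ge0$. The data at $x=\pm\infty$ are harmless, since $u_t$ and $u^p$ both vanish there, and it suffices to have $J\ge0$ at the starting time $t_0$, which may be taken close to $T$ because only large values of $M$ enter the final integration.

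\textbf{Main obstacle.} The delicate point is the interface $\{x=0\}$. The scheme requires $J\ge0$ up to the moving maximum $x_0(t)$, which lies in $(0,\infty)$ but may approach $0$, so one must control $J$ near $x=0$, exactly where $a$ jumps and the half–line analysis meets the pure–diffusion region $\{x<0\}$ in which $u_t\ge0$ but $u_t\ge\varepsilon u^p$ fails. Combined with the degeneracy of $(u^m)_{xx}$ on the level sets $\{u=0\}$ (forcing one to work on $\{u>0\}$ or to regularize) and with the sign difficulty for $m>1$, this is where a careful barrier/interface construction and the hypothesis $\partial_tu\ge0$ are essential; consistently with the statement, the monotonicity is imposed only for $p>m+2$ and is dispensable for $1<p\le m+2$.
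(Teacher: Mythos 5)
Your lower bound is fine and is essentially equivalent to the paper's (the authors compare with the flat supersolution $U(t)=C_p(T-t)^{-1/(p-1)}$ instead of differentiating $M$, but the content is the same). The upper bound, however, has a genuine gap. A Friedman--McLeod argument for $J=u_t-\varepsilon u^p$ needs $J\ge0$ on the whole parabolic boundary, and in particular $u_t(\cdot,t_0)\ge\varepsilon u^p(\cdot,t_0)$ at the starting time; this is a nontrivial hypothesis on the solution (ultimately on $u_0$) that is simply not available here. The theorem assumes nothing about $u_t$ when $1<p\le m+2$, and only $u_t\ge0$ (not $u_t\ge\varepsilon u^p$) when $p>m+2$, so your closing remark that the monotonicity is ``dispensable for $1<p\le m+2$'' has the logic backwards: it is your method that needs the extra hypothesis in \emph{every} case, whereas the statement imposes it only above the Fujita exponent. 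On top of this sit the unresolved interface problem at $x=0$ (you would need a sign for $J$ along $\{x=0\}$, where the maximum point $x_0(t)$ may accumulate and where the equation for $J$ breaks down), the wrong-signed source term for $m>1$, and the fact that $M'(t)\ge\varepsilon M^p(t)$ presupposes that the maximum sits in $\{x>0\}$ for a.e.\ $t$, which your one-line argument does not establish since $M(t)=\|u(\cdot,t)\|_\infty$ need not be monotone.

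The paper avoids all of this with a different mechanism, a rescaling argument in the spirit of Fila--Souplet. One picks doubling times $t_j$ with $M(t_{j+1})=2M(t_j)$; if $z_j=(t_{j+1}-t_j)M^{p-1}(t_j)$ stays bounded the rate follows by summing a geometric series, and otherwise the rescaled functions $\varphi_j$ are uniformly bounded by $2$ on time intervals of length $z_j\to\infty$ while carrying a fixed nontrivial lower datum $g$. For $p\le m+2$ every such datum forces finite-time blow-up by the Fujita part of Theorem~\ref{teo-exponents}; for $p>m+2$ the monotonicity $u_t\ge0$ lets one push $\varphi_j$ up to a flat profile (via the Polubarinova--Kochina solution) whose energy is negative, whence blow-up by the concavity argument. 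Either way one contradicts the uniform bound, and this is precisely why $\partial_t u\ge0$ enters only for $p>m+2$. To salvage your approach you would have to add hypotheses on $u_0$ guaranteeing $J\ge0$ initially and carry out the barrier construction at the interface, which would yield a strictly weaker theorem than the one stated.
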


\

We end the description of solutions of problem \eqref{1.1} by studying the set where the solution tends to infinity, the blow-up set
$$
B(u)=\{x\in\mathbb{R}\,:\,\exists\, x_j\to x,\;t_j\to T,\; u(x_j,t_j)\to\infty\}.
$$
In the global reaction case it has been proved the three possibilities according to the reaction exponent: single point blow-up, $B(u)$ is a discrete set, if $p>m$; regional blow-up, $B(u)$ is a compact set of positive measure, if $p=m$; and  global blow-up, $B(u)=\mathbb{R}$, if $p<m$. See again \cite{SamarskiiBook87}. The same happens for localized reaction $a(x)=\mathds{1}_{(-L,L)}$, at least for $m>1$ and symmetric nondecreasing initial values, see \cite{FdPV06}. In our case we prove that the same happens, and we additionally  show where this blow-up set can lie in the case where the blow-up is not the whole line. To do that we assume in the case $p\ge m$  that there exists some point $x_0$ for which the blow-up rate \eqref{buprate} holds, i.e. 
\begin{equation}\label{x0}
u(x_0,t)\ge c (T-t)^{-\frac1{p-1}}.
\end{equation}
\begin{teo}\label{teo-bupsets}
   Let $u$ be a blow-up solution to problem \eqref{1.1}, with compactly supported initial datum. Assume also \eqref{x0}. We have for the blow-up set $B(u)$:
\begin{enumerate}
  \item if $p>m$ then $B(u)\subset\mathbb{R}^+$. Moreover if $m>1$ it is bounded;
  \item if $p=m$ then $B(u)$ is bounded with nontrivial measure;
  \item if $p<m$ then $B(u)=\mathbb{R}$.
\end{enumerate}
\end{teo}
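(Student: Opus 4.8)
My approach is to compare the solution with explicit self-similar sub- and supersolutions, so let me first fix the relevant scaling. Since we are in the blow-up regime $p>1$, the nonlinearity $u^p$ is locally Lipschitz and the standard comparison principle is available for the weak solutions. Writing $\beta=\frac1{p-1}$ and
$$\gamma=\frac{m-p}{2(p-1)},$$
the balance $u_t\sim(u^m)_{xx}\sim u^p$ forces the amplitude $(T-t)^{-\beta}$ and the spatial scale $(T-t)^{-\gamma}$, so the natural self-similar variable is $\eta=x\,(T-t)^{\gamma}$. The sign of $\gamma$ governs the three regimes: $\gamma<0$ (i.e. $p>m$) means the blow-up region contracts, $\gamma=0$ ($p=m$) that it has a fixed size, and $\gamma>0$ ($p<m$) that it expands to fill the whole line. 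For $p\ge m$ the hypothesis \eqref{x0} furnishes a point $x_0$ realizing the amplitude $(T-t)^{-\beta}$, which I use as the anchor from which the profile is transported.

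\emph{Localization (the upper bounds).} For the boundedness statements in (1) (when $m>1$) and (2), and for the inclusion $B(u)\subset\mathbb{R}^+$ in (1), I would build bounded supersolutions. To the left of the support of $a$, on $\{x<0\}$, the equation is the pure porous medium equation $u_t=(u^m)_{xx}$; here I would use a self-similar supersolution $\overline u(x,t)=(T-t)^{-\beta}F(\eta)$ whose profile solves $(F^m)''+\gamma\eta F'-\beta F=0$ on $\eta<0$, dominates $u$ at $x=0$, and decays as $\eta\to-\infty$. When $p>m$ we have $\gamma<0$, so for fixed $x<0$ one has $\eta\to-\infty$ as $t\to T$ and the decay of $F$ keeps $\overline u(x,t)$ bounded; hence no point $x<0$ lies in $B(u)$. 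For the boundedness to the right (and, when $p=m$, also on a bounded left neighbourhood) I would exploit the finite speed of propagation valid for $m>1$: using that $u_0$ is compactly supported, the pressure $\frac{m}{m-1}u^{m-1}$ has an interface moving at finite speed away from the concentrated blow-up, so a travelling-wave supersolution confines $B(u)$ to a bounded set.

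\emph{Lower bounds (regional and global blow-up).} For (2), $p=m$, I would insert a separate-variables subsolution $\underline u(x,t)=(T-t)^{-\beta}\theta(x)$, where $\theta\ge0$ is a compactly supported solution of $(\theta^m)''+\theta^m-\beta\theta=0$ supported in a fixed interval $(0,\ell)\subset\mathbb{R}^+$, so that $a\equiv1$ on its support; comparison then forces blow-up on $(0,\ell)$ and gives $B(u)$ a nontrivial measure. For (3), $p<m$, I would use the expanding self-similar subsolution $\underline u(x,t)=(T-t)^{-\beta}V(\eta)$ with $\gamma>0$: on $\{x>0\}$ the profile $V$ solves the full self-similar equation $(V^m)''+\gamma\eta V'-\beta V+V^p=0$, while on $\{x<0\}$, where $a=0$, it need only satisfy the pure-medium subsolution inequality $(V^m)''+\gamma\eta V'-\beta V\le0$; the two pieces are matched at $\eta=0$ with an upward jump of the flux $(V^m)'$, which is the admissible direction for a subsolution. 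Since $\gamma>0$, every fixed $x\in\mathbb{R}$ satisfies $\eta\to0$ as $t\to T$, so $\underline u(x,t)\sim(T-t)^{-\beta}V(0)\to\infty$ and $B(u)=\mathbb{R}$.

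\emph{Main obstacle.} The delicate point is the transmission of blow-up across the edge $x=0$ of the reaction: I must show that pure diffusion on $\{x<0\}$ carries the blow-up to the entire left half-line when $p<m$ (for (3)) yet blocks it when $p>m$ (the inclusion in (1)). Both rest on the construction and the matching at $\eta=0$ of the porous medium profiles $F$ and $V$ — verifying that $F$ decays at $-\infty$ (so the left side stays bounded when $\gamma<0$) while $V$ stays positive at the origin with the correct one-sided flux condition (so the left side is dragged to infinity when $\gamma>0$). Controlling these profiles, and showing that the actual solution is asymptotically trapped between the corresponding sub- and supersolutions as $t\to T$, is the core of the argument.
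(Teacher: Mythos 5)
Your overall strategy --- comparison with self-similar profiles whose behaviour is governed by the sign of $\frac{m-p}{2(p-1)}$ --- is the paper's strategy, and your left-localization argument for $p>m$ (a supersolution $(T-t)^{-1/(p-1)}F(\eta)$ on $\{x<0\}$, anchored at $x=0$ by the upper blow-up rate, with $F$ decaying like $|\eta|^{-\beta/|\gamma|}$ so that the similarity factors cancel and $\overline u$ stays bounded at each fixed $x<0$) is essentially the paper's, which builds exactly this profile by a phase-plane analysis. The genuine gap is in the lower bounds, items (2) and (3): your subsolutions solve the \emph{full} equation with the reaction, hence have no free amplitude parameter (the term $V^p$ destroys the scaling invariance), and their blow-up time must coincide exactly with $T$. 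At the comparison time $t_0$ such a subsolution has height of order $(T-t_0)^{-1/(p-1)}$ on an interval of length $(T-t_0)^{-\gamma}$ (a fixed interval when $p=m$), so placing it below $u(\cdot,t_0)$ requires a lower bound of that order for $u$ on the \emph{whole} region --- information you never establish. This is precisely where hypothesis \eqref{x0} enters, and you have its role inverted: you use it as the anchor ``for $p\ge m$'', whereas in the paper the left localization needs only the upper rate of Theorem \ref{teo-buprates}, and \eqref{x0} is what drives the lower bounds for $p\le m$. The paper's fix is a two-step comparison: \eqref{x0} makes $u$ a supersolution of the \emph{pure} porous medium equation on either side of $x_0$ with boundary datum $c(T-t)^{-1/(p-1)}$, and one then compares with the scalable family $W_C$ of pure-PME self-similar solutions of Remark \ref{rem-p<m} (scalable because dropping the nonnegative reaction both preserves the subsolution property and restores scaling invariance, so $C$ can be taken small for the initial comparison). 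Without this step, or an equivalent spreading of \eqref{x0} from the point $x_0$ to a region, your comparisons for (2) and (3) cannot be initialized.

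A secondary but real problem is the boundedness of $B(u)$ from the right when $p\ge m>1$. Finite speed of propagation gives a compact support at each $t<T$, but the interface speed is driven by the pressure gradient, which is not controlled up to the blow-up time, and a bounded travelling wave cannot be a global supersolution of an equation whose reaction forces blow-up; so ``finite propagation plus a travelling-wave supersolution'' does not confine $B(u)$. The paper instead invokes the localization result for global reaction from \cite{SamarskiiBook87}, proved by intersection comparison with self-similar profiles near the free boundary, and observes that the same argument applies near the right-hand interface here; an argument of that strength is needed.
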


We remark that due to the lack of symmetry in the problem it is not clear the existence of the point assumed in the statement. In general we can prove that $B(u)=[x_1,\infty)$ for some $-\infty\le x_1<\infty$ if $p<m$, and $B(u)$ is bounded if $p=m$.

\

\noindent\textsc{Organization of the paper}: We characterize the critical exponents,  Theorem~\ref{teo-exponents}, in Sections~\ref{sect-globalexp} and~\ref{sect-fujitaexp}. The grow-up rates, Theorems~\ref{teo-guprates-p=1} and~\ref{teo-guprates-p<1} are proved in Section~\ref{sect-guprates}, while the   blow-up rates, Theorem~\ref{teo-buprates} is proved in Section~\ref{sect-buprates}. Finally we devote Section~\ref{sect-bupsets} to describe the blow-up sets, Theorem~\ref{teo-bupsets}.
%

\section{Blow-up versus global existence}\label{sect-globalexp}

We prove in this section that the global existence exponent is $p_0=1$. First it is obvious that if $0<p\le1$ every solution to problem \eqref{1.1} is global. Just use  comparison with the flat supersolution
\begin{equation}\label{flat}
U'=U^p, \qquad U(0)=\|u_0\|_\infty.
\end{equation}

\begin{rem}
  Though in the case $p<1$ there is in general no uniqueness, and therefore no comparison (the reaction is not Lipschitz), we always can compare with a supersolution which is a maximal solution of the equation, like the function $U$ in \eqref{flat} is, see \cite{dPV91}.
\end{rem}

In order to complete the proof of the first item in Theorem~\ref{main-teo}
we  observe that all the solutions have grow-up if $p\le1$.

\begin{lema}\label{lem-global-growup}
Let $u$ be a solution of \eqref{1.1}. If $p\le1$  then
$$
u(x,t)\to\infty
$$
uniformly in compact sets.
\end{lema}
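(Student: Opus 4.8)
The plan is to bound $u$ from below by an explicit subsolution that already diverges uniformly on compact sets; the conclusion then follows by comparison. Since for $p<1$ the reaction is not Lipschitz and comparison is delicate, I would understand this bound in the sense of the Remark above, comparing with the minimal solution and using that every solution of \eqref{1.1} dominates it, cf. \cite{dPV91}. The guiding idea is that the reaction in \eqref{1.1} lives on all of $\mathbb{R}^+$ and keeps pumping mass there at the flat rate of \eqref{flat}, while the porous medium diffusion then carries this growth into $\mathbb{R}^-$; accordingly I would split the line at the origin and argue in two stages.

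Stage one, growth on $\mathbb{R}^+$. Fixing a target compact $[\varepsilon,M]\subset(0,\infty)$, I would impose a zero Dirichlet condition at a point $\delta_0\in(0,\varepsilon)$, which only lowers the datum and hence produces a subsolution of \eqref{1.1} solving $v_t=(v^m)_{xx}+v^p$ to the right of $\delta_0$. On this region I would look for a subsolution whose amplitude follows the flat rate, $\tau(t)\sim t^{1/(1-p)}$ if $p<1$ and $\tau(t)=e^{ct}$ if $p=1$, and whose spatial profile is tuned to defeat the diffusion. The relevant balance is between the curvature loss $(\underline v^m)_{xx}\sim\tau^m/\ell^2$ on the shoulder of the profile and the reaction gain $\sim\tau^p$: when $m\le p$ this loss is harmless and a quasi-stationary plateau of fixed shape suffices, whereas when $m>p$ the profile must spread, $\ell(t)\gtrsim\tau^{(m-p)/2}$, and one is led to a genuinely self-similar subsolution $\underline v(x,t)=\tau(t)\,\phi\!\left(x/\ell(t)\right)$ with $\phi\ge0$ compactly supported solving the resulting profile ODE. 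In either case $\tau\to\infty$ and the core of the profile eventually covers $[\varepsilon,M]$, so $u\to\infty$ uniformly on compacts of $\mathbb{R}^+$; in particular $u(\delta,t)\ge g(t)\to\infty$ for some fixed $\delta>0$.

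Stage two, propagation to $\mathbb{R}^-$. On $(-\infty,\delta)$ the coefficient $a$ vanishes, so there $u_t\ge(u^m)_{xx}$ with the boundary bound $u(\delta,t)\ge g(t)$. I would then compare $u$ from below with a subsolution of the pure porous medium equation carrying the growing Dirichlet datum $g(t)$ at $x=\delta$: a boundary source that tends to infinity forces the solution to infinity on every compact subset of $\mathbb{R}^-$ as well (no hypothesis on $u_0$ at $-\infty$ is needed for mere divergence, only for the sharp rate). Together the two stages give $u\to\infty$ uniformly on compacts of $\mathbb{R}$.

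The hard part is Stage one in the degenerate, slow-diffusion range $m>p$ (in particular $m>1$). There a fixed-profile, growing-amplitude ansatz is impossible, because the diffusion loss grows like $\tau^m$ and eventually swamps the reaction gain $\tau^p$; the subsolution is therefore forced to spread, and one must actually produce a nonnegative compactly supported profile solving the associated ODE and control its free interface. A secondary nuisance is the jump of $a$ at $x=0$, which prevents using a single smooth self-similar subsolution across the whole line and is precisely what makes the two-stage splitting, together with the non-uniqueness caveat for $p<1$, the convenient way to organize the argument.
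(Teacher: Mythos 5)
Your strategy is viable, but it is a genuinely different route from the paper's, which disposes of this lemma in one line: a solution of the problem with localized reaction $a(x)=\mathds{1}_{(-L,L)}$, translated so that its reaction interval sits inside $(0,\infty)$, is automatically a subsolution of \eqref{1.1} because $\mathds{1}_{(0,2L)}\le\mathds{1}_{(0,\infty)}$, and uniform grow-up on compact sets for that problem is already proved in \cite{FdP}; comparison finishes the argument. What you propose instead is a self-contained two-stage construction --- a growing compactly supported (self-similar when $m>p$) subsolution on the half-line where the reaction acts, followed by propagation into $\mathbb{R}^-$ via the porous medium equation with an unbounded Dirichlet datum --- and this is essentially the machinery the paper deploys later, in Section~\ref{sect-guprates}, to get the \emph{sharp} grow-up rates (the four-piece glued profile for $p<1\le m$, the exponential self-similar profiles for $p=1<m$, and the boundary-value comparison on $\mathbb{R}^-$). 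So your approach buys more (quantitative lower bounds, and it correctly isolates that no decay hypothesis on $u_0$ at $-\infty$ is needed for mere divergence), but at the cost of the hard ODE shooting/gluing construction that you explicitly defer; the paper's citation shortcut avoids that entirely for this lemma. Two details you should still make explicit if you carry your plan out: for $m>1$ and $u_0$ supported in $\mathbb{R}^-$ you must first invoke the penetration property of the porous medium equation to find a time $t_0$ at which $u(\cdot,t_0)$ is positive on the initial support of your subsolution (the paper does exactly this elsewhere), and for $p<1$ the lower comparison should be justified on the positivity set of $u$ rather than via the minimal solution, which can be trivial and would then give nothing.
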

\begin{proof}
We only note that this occurs for the solutions to the problem  if the reaction is localized in a bounded interval, $a(x)=\mathds{1}_{(-L,L)}$, see \cite{FdP}, and any solution to that problem (translated) is a subsolution to our problem.
\end{proof}

We now show that for $p>1$ there exist solutions that blow up in finite time provided the initial value is large in some sense.

\begin{lema}
If $p>\max\{m,1\}$ problem \eqref{1.1} has blow-up solutions.
\end{lema}
\begin{proof}
We observe that $u$ is a supersolution to the Dirichlet problem
$$
\left\{
\begin{array}{ll}
w_t=(w^m)_{xx}+w^p, \qquad &x\in (A,B),\, t>0\\
w(A,t)=w(B,t)=0,\\
w(x,0)=w_0(x),
\end{array}\right.
$$
for any interval $(A,B)\subset (0,\infty)$. Use then the results in \cite{SamarskiiBook87}.
\end{proof}

\begin{lema}\label{bump<m}
If $1<p\le m$ there exist blow-up solutions.
\end{lema}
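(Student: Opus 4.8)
The comparison used in the previous lemma is unavailable in this range. When $1<p<m$ the pressure equation is sublinear (the Lane--Emden exponent $p/m$ is strictly less than $1$), so the Dirichlet problem $w_t=(w^m)_{xx}+w^p$ on any bounded interval has a stable positive stationary state and all its solutions are global, however large the data; the borderline $p=m$ blows up only on sufficiently long intervals. Thus a bounded reaction interval cannot by itself force blow-up for $1<p\le m$, and we must exploit that the reaction acts on the whole half-line $(0,\infty)$. The plan is to build a self-similar subsolution whose support spreads to the right, staying inside $(0,\infty)$, and whose amplitude blows up in finite time.

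First I would look for a subsolution of the form
\[
\underline u(x,t)=(T-t)^{-\alpha}\,g\big((x-x_0)(T-t)^{\gamma}\big),\qquad x_0>0,\ g\ge0.
\]
Matching the reaction with the time derivative forces $\alpha=\frac1{p-1}$, while matching the diffusion term forces $\gamma=\frac{m-p}{2(p-1)}\ge0$; note that $\gamma=0$ exactly when $p=m$. Writing $\xi=(x-x_0)(T-t)^{\gamma}$, the inequality $\underline u_t\le(\underline u^m)_{xx}+\underline u^p$ reduces, on the support where $a\equiv1$, to the profile condition
\[
\alpha\,g-\gamma\,\xi\,g'\le (g^m)''+g^p .
\]
I would seek $g$ supported on a half-interval $\xi\in[0,R]$ with $g(0)=0$, so that the support in $x$ is $[x_0,\,x_0+R(T-t)^{-\gamma}]\subset(0,\infty)$: the left interface stays fixed at $x_0$ while the right interface spreads into the reaction region as $t\to T^-$ (it merely stands when $p=m$). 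For $p=m$ this $g$ is the explicit, compactly supported regional blow-up profile of Samarskii et al.\ \cite{SamarskiiBook87}, translated into $(0,\infty)$; for $1<p<m$ one constructs $g$ directly from the profile inequality.

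It then remains to check that $\underline u$ is a genuine weak subsolution of \eqref{1.1} and to conclude. Away from the interfaces the profile inequality gives the claim, and since $\underline u>0$ only for $x>0$ we indeed have $a(x)\equiv1$ there. At the two free boundaries the pressure $\underline u^m$ switches on with nonnegative slope, so $(\underline u^m)_{xx}$ carries a nonnegative singular part, which is favourable for the subsolution inequality; this is the standard way compactly supported porous-medium barriers are handled. Choosing $u_0\ge\underline u(\cdot,0)$, which only requires $u_0$ to be large on a bounded set, and comparing, we obtain $u\ge\underline u$, so $u$ must blow up no later than $T$.

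The hard part will be the construction for the genuinely degenerate range $1<p<m$: one must produce a compactly supported profile $g$ satisfying the profile inequality together with $g(0)=0$ and $\gamma>0$, and simultaneously ensure that the spreading support never reaches $\{x<0\}$, where the reaction switches off. At $p=m$ this difficulty disappears, since $\gamma=0$ and the explicit standing profile is available, so only the translation into $(0,\infty)$ and the interface check are needed.
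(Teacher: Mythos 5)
Your ansatz is exactly the paper's: a self-similar subsolution $\underline u(x,t)=(T-t)^{-\alpha}f\big(x(T-t)^{-\beta}\big)$ with $\alpha=\frac1{p-1}$, $\beta=\frac{p-m}{2(p-1)}\le0$, a nonnegative profile vanishing at the left edge of its support so that the rightward-spreading support stays inside $(0,\infty)$ where $a\equiv1$, and the usual sign check at the two free boundaries. The problem is that you stop precisely where the content of the lemma begins: the existence of a compactly supported profile $g$ with $g(0)=0$ satisfying the profile equation in the range $1<p<m$ is first asserted (``one constructs $g$ directly from the profile inequality'') and then explicitly deferred as ``the hard part''. That existence statement \emph{is} the proof; nothing you have written rules out the a priori possibility that every shooting from $g(0)=0$ produces a profile that stays positive on all of $(0,\infty)$, in which case no compactly supported subsolution of this form exists. (For $p=m$ your fallback is fine — and in fact for $p=m$ the Dirichlet comparison of the preceding lemma already works on a subinterval of $(0,\infty)$ of length greater than $\pi$, since the energy/concavity argument applies once the first eigenvalue drops below $1$; so the genuinely open case is exactly $1<p<m$, the one you leave blank.)

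What you need to supply is the shooting argument the paper uses. Consider $(f^m)''-\beta\xi f'+f^p-\alpha f=0$ with $f(0)=0$ and shooting parameter $(f^m)'(0)=\mu$, and suppose for contradiction that for every large $\mu$ the profile stays positive on $(0,\infty)$. Rescale $g_k(\xi)=k^{-m}f^m\big(k^{\frac{m-p}2}\xi\big)$ with $k=\mu^{\frac{p+m}2}$, so that $g_k(0)=0$, $g_k'(0)=1$ and $g_k$ solves a perturbation of the Emden--Fowler equation with coefficients of size $k^{1-p}$. The associated energy
$E=\frac12(g_k')^2+\frac{m}{p+m}g_k^{\frac{p+m}m}-\frac{\alpha m}{1+m}k^{1-p}g_k^{\frac{m+1}m}$
is nonincreasing because $\beta\le0$ — this is exactly where $p\le m$ enters — and bounded below, giving uniform bounds on $g_k$ and $g_k'$. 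Letting $k\to\infty$ one obtains a nonnegative limit $G$ solving $G''+G^{p/m}=0$, $G(0)=0$, $G'(0)=1$, whose solution crosses the axis at a finite point with nonzero slope; this contradicts positivity. Hence some $\mu_0$ yields a profile vanishing at a finite $\xi_0$ with $(f^m)'(\xi_0^-)\le0$, and truncation by zero gives the subsolution you describe. Without this (or some substitute construction), the proposal is a correct plan but not a proof.
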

\begin{proof}
We construct a self-similar subsolution
$$
\underline u(x,t)=(T-t)^{-\alpha} f(\xi)\qquad \xi=x(T-t)^{-\beta},
$$
satisfying $u(0,t)=0$. The self-similar exponents are given by
$$
\alpha=\frac1{p-1},\qquad \beta=\frac{p-m}2 \alpha,
$$
and the self-similar profile satisfies
\begin{equation}\label{eq.profiles}
(f^m)''-\beta \xi f'+f^p-\alpha f=0, \qquad f(0)=0.
\end{equation}

Using $(f^m)'(0)=\mu$ as shooting parameter we claim that there exists some  $\mu_0>0$ such that the corresponding profile $f_0$ satisfies
$$
f_0(\xi)>0 \quad \mbox{in } (0,\xi_0)\quad \mbox{and } \quad f_0(\xi_0)=0,
$$
for some $\xi_0>0$. This gives the desired blow-up subsolution with profile
$$
f(\xi)= \left\{
\begin{array}{ll}
f_0(\xi), \quad&\xi\in(0,\xi_0) \\
0, &\mbox{otherwise}
\end{array}\right.
$$
Then, if $u_0(x)>\underline u(x,0)$ the solution of \eqref{1.1} blows up.

In order to prove the claim we argue by contradiction, assuming that for every large $\mu$ the corresponding profiles $f_\mu$ are positive in $(0,\infty)$. Given any of such profiles with $\mu>1$ we take
$k=\mu^{\frac{p+m}2}$  and consider the function
$$
g_k(\xi)=\frac1{k^m} f^m(k^{\frac{m-p}2}\xi).
$$
It satisfies the initial value problem
$$
\left\{
\begin{array}{l}
g_k''+g_k^{p/m}-k^{1-p} \left( \beta\xi (g_k^{1/m})'-\alpha g_k^{1/m}\right)=0,\qquad\xi>0, \\
g_k(0)=0,\\
g_k'(0)=1.
\end{array}\right.
$$

We define the energy of the system at a point $\xi$ as
$$
E(\xi)=\frac12(g_k')^2+ V(g_k), \qquad V(s)= \frac{m}{p+m}s^{\frac{p+m}m}-\frac{\alpha m}{1+m} k^{1-p} s^{\frac{m+1}m}.
$$
Multiplying the equation by $g_k'$ we get that
$$
E'(\xi)=\frac\beta m k^{1-p}\xi g_k^{\frac{1-m}m} (g_k')^2\le0,
$$
since $\beta\le0$.  Thus,
$$
E(\xi)\le E(0)=\frac12.
$$
Also, calculating the minimum of the  potential $V$ we have
$$
E(\xi)\ge-ck^{-(p+m)}\ge -c.
$$
Since $p>1$ this implies that there exists two constants $C_1,C_2$ depending on $m$ and $p$  such that
$$
0\le g_k\le C_1,\qquad |g_k'|\le C_2.
$$
Hence, letting $k\to\infty$ we have that $g_k$ converges  uniformly in compact sets to a non negative function $G$. It is clear that $G$ satisfies
%
$$
\left\{
\begin{array}{l}
G''+G^{p/m}=0,\qquad \xi>0,\\
G(0)=0,\\
G'(0)=1.
\end{array}\right.
$$
However the solution of the above problem  crosses the axis at some finite point with non-zero slope. This is a contradiction and the claim is proved.
\end{proof}

\section{Fujita exponent}\label{sect-fujitaexp}

In this section we prove that the Fujita exponent is $p_c=m+2$, that is, all solutions blow up if $1<p\le m+2$, and if $p>m+2$ not all solutions do so.
In this last range $p>m+2$, it is easy to see that small initial data produce global solutions, by comparison with the global supersolutions corresponding to the case $a(x)=1$, see for instance the book \cite{SamarskiiBook87}. In fact they tend to zero for $t\to\infty$.

We divide the proof of blow-up below $p_c$ in three cases, $1<p\le m$, $m<p<m+2$ and $p=m+2$, the most difficult case being the last one.

\begin{lema}
If $1<p\le m$ then all solutions blow up in finite time.
\end{lema}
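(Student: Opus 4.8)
The plan is to reduce the statement to a single comparison: I will show that \emph{every} nontrivial solution eventually lies above a translate of the compactly supported blow-up subsolution $\underline u$ constructed in Lemma~\ref{bump<m}, and then invoke comparison. The point that makes this work for the whole range $1<p\le m$ is that the self-similar exponent there is $\beta=\frac{p-m}{2}\alpha\le0$, so the subsolution can be made simultaneously \emph{low and narrow} at its initial time by choosing the blow-up time $T$ large; this is precisely the feature absent for the bounded localized reaction $a=\mathds 1_{(-L,L)}$ (whose Fujita exponent $m+1$ would not cover all of our range), so the half-line geometry is essential.

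First I would produce a quantitative positivity bound. Assuming $u_0\not\equiv0$ (otherwise $u\equiv0$), the reaction term is nonnegative, so $u$ is a supersolution of the pure porous medium equation $v_t=(v^m)_{xx}$ with the same datum, and $u\ge v$ by comparison (legitimate here since $p>1$ makes $u^p$ locally Lipschitz). Using the spreading of the support of $v$ (immediate from the Gaussian kernel when $m=1$, and from the known expansion of the positivity set of the Barenblatt/PME flow when $m\neq1$), I obtain a time $t_1>0$, an interval $I=(A,B)\subset(0,\infty)$ whose length $B-A$ can be made as large as desired by taking $t_1$ large, and a constant $\delta>0$ with $u(x,t_1)\ge\delta$ for $x\in I$.

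Next I place the subsolution. Since its support sits in $x>0$ where $a\equiv1$, the right translate $\underline u(x-A,t)$ is again a subsolution of \eqref{1.1}, its support $(A,\,A+\xi_0(T-t)^{\beta})$ remaining inside $(0,\infty)$. I then fix $T$ large so that at $t=0$ the translated subsolution has peak $T^{-\alpha}<\delta$ and support contained in $I$: when $p<m$ we have $\beta<0$ and the width $\xi_0T^{\beta}\to0$, while for $p=m$ the width $\xi_0$ is fixed and one simply takes $I$ with $B-A>\xi_0$ as allowed by the previous step. Consequently $u(\cdot,t_1)\ge\underline u(\cdot-A,0)$ everywhere, and comparison gives $u(x,t_1+t)\ge\underline u(x-A,t)$, which becomes infinite as $t\to T^-$; hence $u$ blows up no later than $t_1+T$.

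The main obstacle is the quantitative positivity step: one must control the positivity set of an arbitrary solution of a porous medium flow with finite speed of propagation well enough to guarantee not merely that $u>0$ somewhere in $(0,\infty)$, but that it exceeds a fixed $\delta$ on an interval wide enough to house the subsolution — the delicate case being $p=m$, where the subsolution width cannot be shrunk and one must instead manufacture an arbitrarily long positivity interval by letting $t_1\to\infty$.
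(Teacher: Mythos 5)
Your argument is exactly the paper's: put a translate of the self-similar subsolution of Lemma~\ref{bump<m} below the solution after enough time, using that for $p<m$ the support shrinks with large $T$ (since $\beta<0$), and for $p=m$ using the penetration property of the porous medium flow to obtain a long enough positivity interval. You have merely spelled out the positivity step that the paper states in one line, so the proposal is correct and essentially identical in approach.
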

\begin{proof}
We only have to check that the self-similar subsolution constructed in Lemma \ref{bump<m} can be put below any solution if we let pass enough time.
\begin{enumerate}
\item It is clear when $p<m$ that we can do it since $\underline u(x,0)$ is small taking $T$ large, as well as its support is small, due to the fact that $\beta<0$.
\item For $p=m$ we note that $\underline u(x,0)$ is still small if $T$ is large but it has a fixed support $[0,\xi_0]$ since $\beta=0$. Nevertheless, using the penetration property of the solutions of the porous medium equation we obtain  that there exists $t_0>0$ such that the support of $u(\cdot,t_0)$ contains any interval.
\end{enumerate}
\end{proof}

\begin{lema}\label{p<m+2}
If $m<p<m+2$ then all solutions blow up in finite time.
\end{lema}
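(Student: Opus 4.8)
The plan is to prove that no nontrivial global solution can exist by a Kaplan-type eigenfunction argument carried out on a finite interval $(0,R)\subset\{x>0\}$ whose length $R$ is then sent to infinity. The freedom to enlarge $R$ without ever leaving the reaction region is precisely what the half-line support of $a$ provides, and it is what separates this case (Fujita exponent $m+2$) from the bounded reaction $a=\mathds{1}_{(-L,L)}$ (Fujita exponent $m+1$). Fix $R$, let $\phi_R>0$ be the first Dirichlet eigenfunction of $-\partial_{xx}$ on $(0,R)$ normalized by $\int_0^R\phi_R=1$, so that $-\phi_R''=\lambda_R\phi_R$ with $\lambda_R=(\pi/R)^2$, $\phi_R(0)=\phi_R(R)=0$ and $\phi_R'(0)>0>\phi_R'(R)$, and set $F(t)=\int_0^R u(x,t)\phi_R(x)\,dx$.

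First I would differentiate $F$ and integrate by parts twice in $x$. Since $a\equiv1$ on $(0,R)$ and the boundary contributions $-[u^m\phi_R']_0^R=u^m(0,t)\phi_R'(0)-u^m(R,t)\phi_R'(R)$ are nonnegative, this yields the differential inequality
\[
F'(t)\ \ge\ -\lambda_R\int_0^R u^m\phi_R+\int_0^R u^p\phi_R .
\]
Writing $X(t)=\int_0^R u^p\phi_R$, Jensen's inequality (with $\phi_R\,dx$ a probability measure and $p>1$) gives $X\ge F^p$, while Hölder's inequality together with $m<p$ gives the one-sided control $\int_0^R u^m\phi_R\le X^{m/p}$. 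Hence $F'\ge X-\lambda_R X^{m/p}\ge\tfrac12 X\ge\tfrac12 F^p$ as soon as $X\ge(2\lambda_R)^{p/(p-m)}$, that is, as soon as
\[
F(t)\ \ge\ K_R:=(2\lambda_R)^{1/(p-m)}\sim R^{-2/(p-m)} .
\]

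The remaining, and main, point is to exhibit a time at which $F$ exceeds this threshold. Because the reaction is nonnegative, $u$ is a supersolution of the pure porous medium equation on $\mathbb{R}$, so $u\ge v$ where $v$ solves $v_t=(v^m)_{xx}$ with $v(\cdot,0)=u_0$. For nontrivial $u_0\in L^1$ the solution $v$ is, for large $t$, of Barenblatt type: its conserved mass $\mu_0=\|u_0\|_1$ spreads over an interval of width $\sim t^{1/(m+1)}$ with height $\sim t^{-1/(m+1)}$. Choosing $t_R\sim R^{m+1}$ so that this spreading interval has length comparable to $R$, a fixed fraction of the mass lies in the bulk of $(0,R)$ where $\phi_R\sim R^{-1}$, and therefore $F(t_R)\ge\int_0^R v(\cdot,t_R)\phi_R\gtrsim\mu_0 R^{-1}$. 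Comparing with the threshold, $R^{-1}\gg K_R\sim R^{-2/(p-m)}$ for $R$ large precisely when $2/(p-m)>1$, i.e. when $p<m+2$. Thus for $R$ large enough $F(t_R)>K_R$; since $F$ is then strictly increasing it stays above $K_R$, the inequality $F'\ge\tfrac12 F^p$ persists, and $F$, hence $u$, blows up in finite time.

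The hard part is the spreading lower bound of the previous paragraph: one must quantify, uniformly in $R$, that a definite portion of the mass of the comparison function reaches the central region of $(0,R)$ by time $t_R\sim R^{m+1}$, so that $F(t_R)\gtrsim R^{-1}$ rather than the weaker $R^{-2}$ one would get from mass concentrated near the endpoint. It is exactly in balancing this gain against the eigenvalue threshold $K_R\sim R^{-2/(p-m)}$ that the borderline $p=m+2$ appears, which is why the critical case must be treated separately in the next lemma. Equivalently, the same exponent balance can be obtained by the nonlinear capacity (test function) method with a parabolic scaling $t\sim R^{\sigma}$, where a nontrivial global solution would force $\int\!\!\int u^p\varphi\to0$; there one needs a scale $\sigma$ with $p-1<\sigma<(p+m)/(p-m)$, and such a $\sigma$ exists if and only if $p<m+2$.
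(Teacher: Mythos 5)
Your argument is correct, but it takes a genuinely different route from the paper. The paper's proof is the Levine--Sacks concavity argument: with the energy $E_v(t)=\tfrac12\int|(v^m)_x|^2-\tfrac m{p+m}\int_0^\infty v^{p+m}$, any solution with $E_u(t_0)<0$ at some time blows up; one then computes the energy of the Barenblatt subsolution \eqref{barenblatt}, $E_B(t)=c_1t^{-\frac{2m+1}{m+1}}-c_2t^{-\frac{p+m-1}{m+1}}$, which turns negative for large $t$ exactly when $p<m+2$, and concludes by comparison. You replace the energy criterion by a Kaplan eigenfunction inequality on $(0,R)$ with $R\to\infty$, and the role of the Barenblatt energy computation is played by the competition between the spreading estimate $F(t_R)\gtrsim R^{-1}$ at $t_R\sim R^{m+1}$ and the eigenvalue threshold $K_R\sim R^{-2/(p-m)}$; both proofs thus rest on the same dimensional balance between the $t^{1/(m+1)}$ diffusive scale and the reaction, and both need the pure-diffusion flow from below. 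What your version buys is elementarity: no energy functional and no appeal to the (nontrivial) blow-up criterion of \cite{LevineSacks84}. What it costs is (i) the quantitative lower bound on the mass reaching the bulk of $(0,R)$, which you rightly flag as the real content and which for $m\le1$ requires the fast-diffusion/heat-kernel analogues of the Barenblatt asymptotics (the paper's proof has the same caveat, handled via \cite{HP} and a Gaussian), and (ii) the H\"older step $\int_0^R u^m\phi_R\le X^{m/p}$, which genuinely uses $m<p$ and so confines the argument to exactly the range of this lemma --- harmless here, since $1<p\le m$ is treated separately, but it means your method does not subsume the previous lemma the way the energy argument would. Your closing of the loop (once $F\ge K_R$ one has $F'>0$, so the threshold condition persists and $F'\ge\tfrac12F^p$ forces finite-time blow-up of $F\le\|u(\cdot,t)\|_\infty$) is sound.
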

\begin{proof}
The proof is the same as for the global reaction and is an easy consequence of the energy argument of \cite{LevineSacks84}, also called concavity argument. In fact, defining the energy of a function $v$ as
 \begin{equation}\label{energy}
 E_v(t)=\dfrac12\int_{-\infty}^\infty|(v^m)_x|^2-\frac m{p+m}\int_0^\infty v^{p+m},
 \end{equation}
 we have that if for a solution $u$ to \eqref{1.1} there exists some $t_0$ such that $E_u(t_0)<0$ then $u$ blows up in finite time. Now we consider the Barenblatt function
\begin{equation}\label{barenblatt}
 B(x,t;D) = t^{-\frac1{m+1}}\left(
D - kx^2t^{-\frac2{m+1}}\right)_+^{\frac1{m-1}},
 \end{equation}
 where $k =\frac{m-1}{2m(m+1)}$, $D > 0$. It is a subsolution to our equation and it satisfies, for some constants $c_1,c_2$ depending only on $m$, $p$ and $D$,
$$
E_B(t)=c_1t^{-\frac{2m+1}{m+1}} - c_2t^{-\frac{p+m-1}{m+1}},
$$
which is negative for $t$ large provided $p<m+2$. The final step is a standard comparison argument: we make $B(x,1;D)$ small  by  taking $D$ small, so that it can be put below $u_0$; this implies $u(x,t)\ge B(x,t+1;D)$ for $t>0$; let $t_1$ be such that $E_B(t_1)<0$; let $v$ be the solution corresponding to the initial value $B(\cdot,t_1;D)$, which by the above energy argument blows up in finite time; since $u\ge v$ so does $u$. In the case $m<1$ we need the behaviour at infinity of every solution, see \cite{HP}, since the function \eqref{barenblatt}  is positive, while for $m=1$ a Gaussian is used instead of a Barenblatt function.
\end{proof}

We observe that the fact that the integral in the reaction term is performed only in $(0,\infty)$ does not affect  the original argument. In \cite{FdPV06} we used the fact that the integral in $(0,L)$ produces a different time power term if $L$ is finite, and so the Fujita exponent is  different in that case.

\begin{lema}
  If $p=m+2$ then all solutions blow up in finite time.
\end{lema}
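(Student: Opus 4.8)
The plan is to keep using the energy criterion behind Lemma~\ref{p<m+2}: since $E_u$ is nonincreasing along the flow (multiplying \eqref{1.1} by $(u^m)_t$ and using $a=\mathds{1}_{(0,\infty)}$ gives $\frac{d}{dt}E_u(t)=-m\int u^{m-1}u_t^2\le0$), it is enough to exhibit, for an arbitrary nontrivial datum, one time $t_0$ with $E_u(t_0)<0$. The new difficulty is that at the critical exponent the two powers of $t$ in $E_B(t)$ coincide: with $p=m+2$ one has $\frac{2m+1}{m+1}=\frac{p+m-1}{m+1}$, so
$$E_B(t)=\bigl(c_1(D)-c_2(D)\bigr)\,t^{-\frac{2m+1}{m+1}},$$
and waiting no longer changes the sign. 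Rescaling the profile $(D-k\xi^2)_+^{1/(m-1)}$ in \eqref{barenblatt} shows $c_1(D)\sim D^{\theta_1}$ and $c_2(D)\sim D^{\theta_2}$ with $\theta_2-\theta_1=\frac{m+1}{m-1}>0$; hence $E_B<0$ \emph{for every} $t$ as soon as $D$ is large enough. The subcritical trick of placing a small Barenblatt below $u_0$ therefore fails, since negative energy now forces $D$ large, i.e. a large comparison function.

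The idea to overcome this is to feed the reaction into the diffusion and let the mass grow without bound. Arguing by contradiction, assume $u$ is global. One can still place a fixed Barenblatt below the datum, $u(x,t)\ge B(x-x_c,t+1;D_0)$, and then estimate the total mass $M(t)=\int_{\mathbb{R}}u(\cdot,t)$ through
$$M'(t)=\int_0^\infty u^p\,dx\ge \int_0^\infty B(x-x_c,t+1;D_0)^p\,dx\ge c\,(t+1)^{-1},$$
where the last bound uses precisely $p=m+2$ (any other exponent produces a different power of $t$). Consequently $M(t)\ge c\log t\to\infty$; this logarithmic growth of the mass is the hallmark of the critical case.

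Next I would convert the growing mass into a Barenblatt lower bound with diverging $D$. Since $au^p\ge0$, $u$ is a supersolution of the pure porous medium equation, so comparing $u(\cdot,t_1+\cdot)$ with the PME solution issuing from $u(\cdot,t_1)$ and invoking the asymptotic stabilization of PME solutions towards the Barenblatt of their own mass gives, for $s$ large,
$$u(x,t_1+s)\ge B(x,s+\tau;D_1),\qquad D_1\sim M(t_1)^{\frac{2(m-1)}{m+1}}\gtrsim(\log t_1)^{\frac{2(m-1)}{m+1}}.$$
Choosing $t_1$ so large that $D_1$ exceeds the threshold of the first paragraph makes $E_{B(\cdot,\cdot;D_1)}$ negative at every time; then the final comparison step of Lemma~\ref{p<m+2} applies verbatim (take $v$ the solution with datum $B(\cdot,s_1+\tau;D_1)$, which blows up because its initial energy is negative, and observe $u(\cdot,t_1+s_1)\ge v(\cdot,0)$), so $u$ blows up, contradicting global existence.

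The main obstacle is this last conversion: turning the merely logarithmic, and a priori only integral, growth of the mass into a \emph{pointwise} lower bound by a Barenblatt whose parameter $D$ genuinely diverges. This requires a quantitative, uniform-on-expanding-sets stabilization estimate for the porous medium flow. Two further points need separate care, exactly as in Lemma~\ref{p<m+2}: for $m=1$ the Barenblatt must be replaced by a Gaussian (with the mass computation redone via the heat kernel), and for $m<1$ the comparison profiles are everywhere positive, so one must again use the prescribed behaviour of $u_0$ at infinity to control the fast-diffusion solution from below.
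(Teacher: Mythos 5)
Your route is genuinely different from the paper's. The paper follows Galaktionov: assuming $u$ is global, it rescales by $t^{1/(m+1)}$, extracts a monotone limit profile $f$ of the rescaled flow, and reaches a contradiction either from the nonexistence of positive solutions of the limit ODE (via the monotone quantity $\mathcal E=(f^m)'+\alpha\xi f$) or, when $f$ is somewhere infinite, from the concavity argument. You instead exploit the critical identity $\int_0^\infty B^{m+2}(x,t;D)\,dx\sim t^{-1}$ to get $\int_{\mathbb R}u(\cdot,t)\ge c\log t$, and then try to trade the divergent mass for a Barenblatt minorant with divergent parameter $D$, whose energy \eqref{energy} is negative. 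The two computations underpinning this are correct for $m>1$: $E_B(t)=t^{-\frac{2m+1}{m+1}}\bigl(c_1D^{\theta_1}-c_2D^{\theta_2}\bigr)$ with $\theta_2-\theta_1=\frac{m+1}{m-1}>0$, and $D\sim M^{\frac{2(m-1)}{m+1}}$, so large mass does force negative energy.

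The step you explicitly leave open is, however, the crux: converting $\int u(\cdot,t_1)\ge c\log t_1$ into $u(\cdot,t_1+s)\ge B(\cdot,s+\tau;D_1)$ with $D_1\to\infty$. This is not a fatal gap, but it must actually be closed, and the tool is the Friedman--Kamin/V\'azquez stabilization theorem (see \cite{vazquez}): the pure porous medium flow $\tilde u$ issued from $u(\cdot,t_1)\in L^1$ satisfies $s^{1/(m+1)}\|\tilde u(\cdot,s)-B(\cdot,s;D_M)\|_\infty\to0$, with $D_M$ determined by the conserved mass, and $u(\cdot,t_1+s)\ge\tilde u(\cdot,s)$ since $au^p\ge0$. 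Because for any $D_1<D_M$ one has $(D_M-y)^{1/(m-1)}-(D_1-y)^{1/(m-1)}\ge c(D_1,D_M)>0$ on $[0,D_1]$, the uniform convergence does yield $\tilde u(\cdot,s)\ge B(\cdot,s;D_1)$ for $s$ large; the waiting time depends badly on $t_1$, but that is harmless under the contradiction hypothesis that $u$ is global. You should also justify $M'(t)=\int_0^\infty u^p$ (vanishing flux at infinity for $L^1$ data), and carry out the cases $m=1$ (Gaussian, heat semigroup) and $m<1$ (everywhere-positive Barenblatt, for which negative energy corresponds to \emph{small} $D$, again matching large mass, and for which the pointwise minorization needs the Herrero--Pierre tail control as in Lemma~\ref{p<m+2}). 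With these points filled in, your argument is a valid and arguably more elementary alternative to the paper's proof; what it costs is the appeal to the asymptotic theory of the porous medium equation, which the paper's self-contained phase-plane/energy analysis avoids.
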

\begin{proof}
We use the method introduced in \cite{Galaktionov94} to prove blow-up for the critical exponent in the case $a(x)=1$, but here the nonsymmetry of the problem makes things more involved.  The argument goes like this: assuming by contradiction that the solution is global,  we rescale and pass to the limit in time, thus obtaining a solution to some problem for which we prove nonexistence. 

Let $u$ be a global solution, and let $t_0\ge1$ and $D$ be such that $u(x,t_0)\ge B(x,t_0;D)$, where $B$ is given by \eqref{barenblatt} (if $m\ne1$, for $m=1$ we use instead a Gaussian like in the proof of Lemma \ref{p<m+2}).  We define the rescaled function
$$
v(\xi,\tau)=t^\alpha u(x,t),\qquad \xi=xt^{-\alpha},\quad\tau=\log t,\quad\alpha=\frac1{m+1}.
$$
We have that $v$ is a solution, for $\tau>\tau_0=\log t_0$, of the equation
\begin{equation}\label{v-crit}
v_\tau=(v^m)_{\xi\xi}+\alpha(\xi v)_\xi+a(\xi)v^{m+2}.
\end{equation}
If $g$ is the solution to equation \eqref{v-crit} with $g(\xi,\tau_0)=B(\xi,1;D)$, by comparison we have that $v\ge g$ for every $\tau>\tau_0$, and in particular $g$ is globally defined in $\tau$. For the special form of the initial value, it is easy to see that $g$ is nondecreasing in $\tau$, and therefore there exists the limit
$$
\lim_{\tau\to\infty}g(\xi,\tau)=f(\xi)\in[0,\infty].
$$
We claim the following alternative:

$a)$ $f$ is locally bounded. Thus we can pass to the limit in \eqref{v-crit}, by means of a Lyapunov functional,  to get that $f$ is a positive solution of
\begin{equation}\label{eq.perfil}
(f^m)''+\alpha (\xi f)'+\rho(\xi) f^{m+2}=0 \qquad \xi\in \mathbb{R},
\end{equation}
see \cite{Galaktionov94}.
Now we observe that the function
$$
\mathcal{E}(\xi)=(f^m)'+\alpha \xi f
$$
satisfies $\mathcal{E}'(\xi)=-\rho(\xi)f^{m+2}(\xi)$, so it is constant for $\xi<0$ and decreasing for $\xi>0$. Then, if we assume   $\mathcal E(0)=\mathcal E_0> 0$, we have that
$$
(f^m)'(\xi)\ge \mathcal E(\xi) \ge \mathcal E_0, \qquad \xi<0.
$$
This implies that there exists a point $\xi_1<0$ such that $f(\xi_1)=0$ and $(f^m)'(\xi_1)\ne0$. Therefore $\mathcal E(0)\le 0$, and there exists some $\xi_2>0$ such that $\mathcal E(\xi_2)=\mathcal E_2<0$. Exactly as before
$$
(f^m)'(\xi)\le \mathcal E(\xi) \le \mathcal E_2, \qquad \xi>\xi_2,
$$
so there exists a point $\xi_3>\xi_2$ such that $f(\xi_3)=0$, $(f^m)'(\xi_3)\ne0$. This gives a contradiction and $f$ cannot exist.

\

$b)$ There exists $\xi_0$ such that $f(\xi_0)=\infty$. Then $g$ is large in a nontrivial interval and this would imply that it blows up in a finite time. This is again a contradiction, and the theorem would be proved.

We have that $f$ satisfies equation \eqref{eq.perfil} in any interval in which it is bounded. It is clear that $f$ cannot have any minima since at such a point we would have from the equation $(f^m)''<0$. This implies
\begin{equation}\label{limsup}
\lim\limits_{\xi\to\xi_0^-}f(\xi)=\limsup\limits_{\xi\to\xi_0^-}(f^m)'(\xi)=\infty.
\end{equation}

Assume $\xi_0>0$. If $f$ is bounded in some interval $(\xi_0-\delta,\xi_0)$, $\delta\le\xi_0$, then $f$ is increasing in that interval with
$$
(f^m)'(\xi)\le\mathcal E(\xi_0-\delta/2),\qquad \xi_0-\delta/2<\xi<\xi_0.
$$
This is a contradiction and thus $f(\xi)=\infty$ for every $0\le\xi\le\xi_0$.
Moreover, if $f$ is bounded in  $\xi<0$, we have
$$
(f^m)'(\xi)+\alpha\xi f(\xi)=c<0,
$$
by the above. Thus by \eqref{limsup}, there is a sequence $\xi_j\to0^-$ such that $|\xi_j|f(\xi_j)\to\infty$. The same argument works from the left to the right, assuming $\xi_0<0$. In conclusion $f$ is large in some interval $|\xi|\le\xi_*$, that could be small, but it satisfies that $\xi_*f(\xi_*)$ is large.

Let us now show that in this situation the function $g$ blows up in finite time.
By  the monotonicity of $g$ in time we have that for any large constant $A_*>0$ there exists $M>0$, $\xi_M>0$ and $\tau_M$ such that $M\xi_M \ge A_*^{3/2}$ and $g(\xi,\tau)\ge M $ for every $|\xi|\le \xi_M$, $\tau\ge\tau_M$. Now we argue as in \cite{FdPV06}. Let $z(x,t)=e^{-\alpha\tau} g(\xi,\tau)$ be the function $g$ in the original variables, and define $h(x,t+e^{\tau_M})$ the solution
of \eqref{1.1} with initial datum
$$
W(x)=\lambda^{-1}(A-\lambda^{-2}x^2)_+
$$
where
$$
A=(\xi_M M)^{2/3}, \qquad \lambda=e^{\alpha\tau_M} (\xi_M^2/M)^{1/3}.
$$
It is clear that $W(x)\le z(x,e^{\tau_M})$, since
$$
\begin{array}{l}
z(x,e^{\tau_M})=e^{-\alpha\tau_M} g(\xi,\tau_M)\ge e^{-\alpha\tau_M}M \qquad\text{for } |x|\le e^{-\alpha\tau_M}\xi_M, \\ [2mm]
W(x)\le W(0)=\lambda^{-1}A=e^{-\alpha\tau_M}M, \\ [2mm]
supp(W)=\{|x|\le \lambda A^{1/2}\}=\{|x|\le e^{-\alpha\tau_M}\xi_M\}.
\end{array}
$$
Moreover,
$$
E_h(0)=\lambda^{-(2m+1)} A^{2m+1/2} \Big( c_1-c_2 A^2\Big),
$$
for some $c_1,c_2$ depending only on $m$. This is negative for $A>A_*=A_*(m)$.
Thus $h$ blows up in finite time, and by comparison $z$, or which is the same $g$, also blows up. This ends the proof.
\end{proof}


\section{Grow-up rates}\label{sect-guprates}

The aim of this section is to study  the speed  at which the global unbounded (grow-up) solutions to problem (1.1) tend to infinity. We therefore consider the range $p\le1$. In order to avoid nonuniqueness issues when $p<1$ we assume in that case that the initial value is positive for $x>0$, that is where the non-Lipschitz reaction applies.

As we have said in the Introduction, the upper estimate of the grow-up rate is given by comparison with the function in \eqref{flat}. In the case of global reaction $a(x)=1$ this is sharp if $p<1$ or $m<1$. In fact we have for $t$ large
\begin{equation}\label{decay-a=1}
u(x,t)\sim \left\{
\begin{array}{ll}
t^{\frac1{1-p}}, \qquad &p<1, \\
 e^t \qquad &m<1=p,\\
t^{-\frac12} e^t, & m=1=p,\\
e^{\frac2{m+1} t}, & m>1=p,
\end{array}\right.
\end{equation}
see \cite{AE86,dPV90,vazquez}.

On the other hand, when $a(x)=\mathds{1}_{(-L,L)}$ the rates are proved in \cite{FdP}. Though in that situation the global existence exponent is different, $p_0=\max\{1,\frac{m+1}2\}$, we quote the results proved in \cite{FdP} in our range $p\le1$:

$i)$ if $p\le1<m$  then
\begin{equation}
u(x,t)\sim t^{\frac1{m+1-2p}},
\end{equation}
in compact sets.

$ii)$  if $m<p<1$ then
\begin{equation}\label{gup-rate-a=L}
u(x,t)\sim
\left\{
\begin{array}{ll}
t^{\frac{1}{1-p}},\qquad & \text{for } |x|<L,\\
t^{\frac1{1-m}},&  \text{for } |x|>L,
\end{array}\right.
\end{equation}
provided that the initial datum satisfies
\begin{equation}\label{eq.comportamiento.dato}
|x|^2 u_0^{1-m}(x)\sim 1.
\end{equation}

$iii)$ if  $m<p=1$ then
\begin{equation}\label{gup-rate-a=L_p1}
u(x,t)\sim
\left\{\begin{array}{ll}
e^t,\qquad & \text{for } |x|<L,\\
t^{\frac1{1-m}},&  \text{for } |x|>L,
\end{array}\right.
\end{equation}
provided that the initial datum satisfies
\begin{equation}\label{eq.comportamiento.dato_p1}
  |x|^2 u_0^{1-m}(x)\sim \log(x).
\end{equation}

$iv)$ if $p=m=1$ then
\begin{equation}
\lim_{t\to\infty}\frac{\log u(x,t)}{t}=\omega(L)\in(0,1).
\end{equation}

\

We prove in this paper that for problem~\eqref{1.1} the rate can be that corresponding to global reaction or to reaction localized in a bounded interval, or none of them, depending on the sign of $p-m$. We can also have a different rate inside or outside the region where the reaction applies when $p>m$, like in the case $a(x)=\mathds{1}_{(-L,L)}$.

\subsection{\underline{Case $p= 1$}}

\

Though the reaction is linear this is the more involved case. We consider separately  the three cases according to $m$ being larger, equal or smaller than~1.

The proof of the grow-up rate follows by comparison with special selfsimilar subsolutions and supersolutions.
We construct such functions in the form
\begin{equation}\label{selsimlar_p1}
w(x,t)=e^{\alpha t} f(x e^{-\beta t}),
\end{equation}
where necessarily
$$
\beta=\frac{m-1}{2}\alpha.
$$
Also, by \eqref{decay-a=1} we consider only $\alpha\le 2/(m+1)$.

The profile $f$ will be given by matching  two functions,
\begin{equation}\label{profile-f}
f(\xi)=\left\{
\begin{array}{ll}
\psi(\xi),\qquad &\xi\ge0,\\
\phi(-\xi),\qquad &\xi\le0,
\end{array}\right.
\end{equation}
where $\psi$ and $\phi$ are the truncation by zero of the solutions of the initial value problems, for some $\lambda\in\mathbb{R}$,
\begin{equation}\label{psi}
\left\{
\begin{array}{l}
(\psi^m)''+\beta \xi \psi'+(1-\alpha) \psi= 0,\qquad \xi>0,\\
\psi(0)=1,\\
\psi'(0)=\lambda,
\end{array}\right.
\end{equation}
\begin{equation}\label{phi}
\left\{
\begin{array}{l}
(\phi^m)''+\beta \xi \phi'-\alpha \phi= 0,\qquad \xi>0,\\
\phi(0)=1,\\
\phi'(0)=-\lambda.
\end{array}\right.
\end{equation}

We start with $m>1=p$.

\subsubsection{\underline{Slow diffusion, $m>1$}.}\label{p=1<m}
The existence of solutions with compact support for equations  of the above type  has been studied in \cite{GP}. Let us consider, as in that  paper, the problem for some $\xi_0>0$ given,
\begin{equation}\label{GP}
\left\{
\begin{array}{ll}
(g^m)''+\beta \xi g'-q g=0, \qquad & \xi<\xi_0,\\
g(\xi_0)=(g^m)'(\xi_0)=0.
\end{array}\right.
\end{equation}
It is proved in \cite{GP},
\begin{teo}\label{fun-g} Let $\beta>0$. There exists a continuous solution $g$ to problem~\eqref{GP} such that $g(0)>0$ for $2\beta+q>0$; $g(0)=0$ for $2\beta+q=0$; and if $2\beta+q<0$ there exists a point $\xi_1\in(0,\xi_0)$ with $g(\xi_1)=0$. Moreover, in the first case, $g'(0)<0$ if $\beta+q>0$; $g'(0)=0$ if $\beta+q=0$; and $g'(0)>0$ if $\beta+q>0$. Finally
\begin{equation}\label{xi0}
g(\xi)\sim(\xi_0-\xi)^{\frac1{m-1}}\qquad\text{for }\; \xi\to\xi_0^-.
\end{equation}
\end{teo}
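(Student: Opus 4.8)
The plan is to integrate the degenerate equation backward from the free boundary $\xi=\xi_0$ and then to extract every sign statement at $\xi=0$ from two exact first integrals. Throughout $m>1$ and $\beta>0$. First I would produce a nonnegative solution emanating from $\xi_0$. Writing $\eta=\xi_0-\xi$ and inserting the ansatz $g\sim C\eta^{1/(m-1)}$, the two terms $(g^m)''$ and $\beta\xi g'$ are both of order $\eta^{(2-m)/(m-1)}$ while $qg$ is of higher order; balancing the first two forces $C^{m-1}=\beta\xi_0(m-1)/m$, a positive constant precisely because $\beta>0$ and $m>1$. To make this rigorous I would set $g=C\eta^{1/(m-1)}(1+w(\eta))$ and solve for $w$ by a contraction mapping in a weighted sup-norm on a small interval $(0,\delta)$; this yields a local solution with the asymptotics \eqref{xi0} and automatically realizes $g(\xi_0)=(g^m)'(\xi_0)=0$. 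Where $g>0$ the equation is a regular second-order ODE, so this solution extends uniquely and stays bounded as $\xi$ decreases, up to the first point $\xi_*\ge0$ at which it vanishes (or down to $\xi=0$ if there is none).

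Next I would introduce the two first integrals. Set $\Phi(\xi)=(g^m)'(\xi)+\beta\xi g(\xi)$; differentiating and using the equation gives $\Phi'=(\beta+q)g$, and the boundary data give $\Phi(\xi_0)=0$, whence $\Phi(\xi)=-(\beta+q)\int_\xi^{\xi_0}g$. Set also $\Psi(\xi)=\xi\Phi(\xi)=\xi(g^m)'+\beta\xi^2g$; a direct computation yields
$$
\Psi'(\xi)=(g^m)'+(2\beta+q)\,\xi g,\qquad \Psi(0)=\Psi(\xi_0)=0.
$$
Integrating $\Psi'$ across the positivity interval and using $\int(g^m)'=-g(0)^m$ produces the master identity
$$
g(0)^m=(2\beta+q)\int_0^{\xi_0}\xi g\,d\xi,
$$
valid whenever $g$ stays positive on $(0,\xi_0)$, the boundary contributions vanishing because of the $\xi$- and $\xi^2$-weights in $\Psi$.

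Since $\int_0^{\xi_0}\xi g>0$, the master identity gives $g(0)>0$ when $2\beta+q>0$ and $g(0)=0$ when $2\beta+q=0$, while $2\beta+q<0$ makes the right-hand side negative, which is impossible; hence in that case $g$ must vanish at some $\xi_1\in(0,\xi_0)$. To justify that $g$ really reaches $\xi=0$ positive in the first two cases, I would evaluate $\Psi$ at a hypothetical first interior zero $\xi_*>0$: there $\Psi(\xi_*)=\xi_*(g^m)'(\xi_*)$ with $(g^m)'(\xi_*)\ge0$ (as $g$ grows out of $\xi_*$ to the right), and the identity on $(\xi_*,\xi_0)$ forces $(2\beta+q)\int_{\xi_*}^{\xi_0}\xi g=-\Psi(\xi_*)\le0$, contradicting $2\beta+q>0$; for $2\beta+q=0$ one has $\beta+q=-\beta<0$, so $\Phi$ is strictly decreasing with $\Phi(\xi_0)=0$, giving $\Phi(\xi_*)>0$ and hence $\Psi(\xi_*)>0$, again contradicting $\Psi(\xi_*)=0$. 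Thus $\xi_*=0$ in both cases.

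For the derivative at the origin, note that in the case $2\beta+q>0$ we have $g(0)>0$, so $\xi=0$ is a regular point and $\Phi(0)=(g^m)'(0)=m\,g(0)^{m-1}g'(0)$ has the sign of $g'(0)$, whereas $\Phi(0)=-(\beta+q)\int_0^{\xi_0}g$ has the sign of $-(\beta+q)$; comparing, $g'(0)<0,\ =0,\ >0$ according as $\beta+q>0,\ =0,\ <0$ (the last case correcting the evident misprint in the statement). The asymptotics \eqref{xi0} are exactly the output of the interface construction. The clean part of the argument is everything built on the two identities $\Phi'=(\beta+q)g$ and $\Psi'=(g^m)'+(2\beta+q)\xi g$; I expect the genuinely technical obstacle to be the local construction at the free boundary, namely proving the exact power law \eqref{xi0} and closing the contraction estimate in the presence of the porous-medium degeneracy at $g=0$.
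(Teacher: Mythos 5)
A point of order first: the paper does not prove this theorem at all. It is quoted from Gilding--Peletier \cite{GP} and used as a black box, so there is no in-paper argument to compare yours against. Judged on its own merits, your proof is correct in structure and, as far as I can tell, close in spirit to the classical one. The interface balance $g\sim C(\xi_0-\xi)^{1/(m-1)}$ with $C^{m-1}=\beta\xi_0(m-1)/m$ is the right local model for \eqref{xi0}, and the two identities $\Phi'=(\beta+q)g$ and $\Psi'=(g^m)'+(2\beta+q)\xi g$ with $\Psi=\xi\Phi$ do deliver every sign statement cleanly: the master identity $g(0)^m=(2\beta+q)\int_0^{\xi_0}\xi g$ gives the trichotomy at $\xi=0$, the evaluation $\Psi(\xi_*)=\xi_*(g^m)'(\xi_*)\ge0$ against $-\Psi(\xi_*)=(2\beta+q)\int_{\xi_*}^{\xi_0}\xi g$ excludes interior zeros when $2\beta+q\ge0$, and $\Phi(0)=-(\beta+q)\int_0^{\xi_0}g$ gives the sign of $g'(0)$, including your correct reading $g'(0)>0$ iff $\beta+q<0$ of the misprinted last clause.

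The one real gap is the sentence ``this solution extends uniquely and stays bounded as $\xi$ decreases.'' Uniqueness and continuation are clear where $g>0$, but boundedness is not free: the natural energy $E=\frac12((g^m)')^2-\frac{qm}{m+1}g^{m+1}$ satisfies $E'=-\beta m\xi g^{m-1}(g')^2\le0$, which is monotone the wrong way when you integrate leftwards from $\xi_0$, so it cannot be invoked. Without an a priori bound you cannot assert that the maximal interval of positivity ends either at $\xi=0$ or at a genuine zero, and none of your integral identities can be integrated over the whole interval. The fix is elementary and lives inside your own framework: integrating $\Phi$ once more from $\xi$ to $\xi_0$ gives
$$
g^m(\xi)=\int_\xi^{\xi_0}\bigl[(\beta+q)(s-\xi)+\beta s\bigr]\,g(s)\,ds\le C\int_\xi^{\xi_0}g(s)\,ds,
$$
and the Osgood-type inequality $-h'\le (Ch)^{1/m}$ for $h(\xi)=\int_\xi^{\xi_0}g$ shows $h^{1-1/m}$ grows at most linearly away from $\xi_0$, hence $g$ is bounded on all of $[0,\xi_0]$. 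The same representation shows $g^m$ is Lipschitz, so $(g^m)'=\Phi-\beta\xi g$ extends continuously to a first zero $\xi_*$; this is the (unstated) regularity you use when you claim $(g^m)'(\xi_*)\ge0$ there. With these two additions, and granting the standard contraction argument at the degenerate interface, the proof is complete.
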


Translating this result to our problems \eqref{psi} and \eqref{phi}, where $q$ takes the values, respectively, $q=\alpha-1<0$ and $q=\alpha>0$, we obtain the following results.

\begin{cor}\label{cor-solucion-soporte-compacto}
\begin{enumerate}
\item For each $\alpha>0$ there exists a unique $\lambda_-(\alpha)>0$ such that problem \eqref{phi} with $\lambda=\lambda_-(\alpha)$ has a decreasing solution with compact support.

\item Problem \eqref{psi} has solutions with compact support for some $\lambda$ if and only if $\alpha>1/m$, the solution being unique for each $\alpha$  given, and thus $\lambda=\lambda_+(\alpha)$. Moreover, $\lambda_+(\alpha)>0$ if $\alpha<2/(m+1)$; and $\lambda_+(2/(m+1))=0$.
\end{enumerate}
\end{cor}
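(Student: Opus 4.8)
The plan is to deduce both items from Theorem~\ref{fun-g} by exploiting the scaling invariance of the equation in \eqref{GP}. Since we are in the regime $m>1$, so that $\beta=\frac{m-1}2\alpha>0$ and the hypotheses of that theorem hold, the equation $(g^m)''+\beta\xi g'-qg=0$ is invariant under the rescaling $g\mapsto g_\sigma$, $g_\sigma(\xi)=\sigma^{2/(1-m)}g(\sigma\xi)$, for every $\sigma>0$; this maps a solution on $(0,\xi_0)$ to a solution on $(0,\xi_0/\sigma)$ and multiplies the endpoint-flux condition by a positive constant. Hence, whenever the profile produced by Theorem~\ref{fun-g} satisfies $g(0)>0$, there is exactly one $\sigma$, namely $\sigma=g(0)^{(1-m)/2}$, for which $g_\sigma(0)=1$; this $g_\sigma$ is then a compactly supported solution of \eqref{psi} or \eqref{phi} (according to the value of $q$) with $g_\sigma'(0)=\sigma^{(3-m)/(1-m)}g'(0)$, whose sign coincides with that of $g'(0)$. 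Conversely, any compactly supported solution taking value $1$ at the origin has, by \eqref{xi0}, exactly the terminal behaviour prescribed in \eqref{GP}, so by backward uniqueness of that problem it must be one of these rescaled profiles; normalising the origin value to $1$ selects a single representative, which yields both existence and uniqueness of the corresponding $\lambda$.

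For item (1) I set $q=\alpha$, so that \eqref{GP} becomes \eqref{phi}, and compute $2\beta+q=m\alpha>0$ and $\beta+q=\frac{m+1}2\alpha>0$. The first inequality gives $g(0)>0$, so the normalisation above yields a compactly supported profile $\phi$ with $\phi(0)=1$; the second gives $g'(0)<0$, hence $\phi'(0)=-\lambda_-(\alpha)$ with $\lambda_-(\alpha)>0$. To see that $\phi$ is decreasing I would argue directly on \eqref{phi}: at any interior critical point $\xi_c$ with $\phi(\xi_c)>0$ the equation forces $(\phi^m)''(\xi_c)=\alpha\phi(\xi_c)>0$, so every critical point is a strict local minimum of $\phi$; since a function cannot have two consecutive minima without an intervening maximum, there is at most one critical point, and the boundary values $\phi(0)=1>0=\phi(\xi_0)$ rule it out. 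Thus $\phi'<0$ throughout $(0,\xi_0)$.

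For item (2) I set $q=\alpha-1$, so that now $2\beta+q=m\alpha-1$ and $\beta+q=\frac{(m+1)\alpha-2}2$. By Theorem~\ref{fun-g} the sign of $2\beta+q$ decides whether the profile reaches the origin positively: if $\alpha>1/m$ then $g(0)>0$ and one obtains a compactly supported $\psi$ exactly as above; if $\alpha=1/m$ then $g(0)=0$, while if $\alpha<1/m$ the profile already vanishes at some interior $\xi_1>0$, so in either case no rescaling can produce a solution positive at the origin, and by the converse discussion no compactly supported solution exists. This establishes the equivalence with $\alpha>1/m$ and, by the same scaling/uniqueness reasoning, the uniqueness of $\lambda_+(\alpha)$. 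Finally the sign of $\lambda_+(\alpha)$ equals that of $g'(0)$, which by the second part of Theorem~\ref{fun-g} is governed by $\beta+q=\frac{(m+1)\alpha-2}2$: this is negative precisely when $\alpha<\frac2{m+1}$, giving $g'(0)>0$ and hence $\lambda_+(\alpha)>0$, and it vanishes at $\alpha=\frac2{m+1}$, giving $\lambda_+(2/(m+1))=0$.

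The main obstacle I anticipate is the converse rigidity step: turning ``Theorem~\ref{fun-g} produces a profile with $g(0)>0$'' into ``\emph{every} compactly supported solution is a rescaling of it.'' This requires knowing that a compactly supported solution necessarily vanishes with the exact rate \eqref{xi0} and with zero flux at the edge of its support, and that the terminal value problem \eqref{GP} has a unique solution propagating backward from $\xi_0$ as long as it stays positive, where the equation is nondegenerate. Granting this, the scaling bookkeeping is routine, and the sign computations $2\beta+q=m\alpha-1$ and $\beta+q=\tfrac{(m+1)\alpha-2}2$ are exactly what pin down the thresholds $1/m$ and $2/(m+1)$.
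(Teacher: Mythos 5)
Your proposal is correct and follows essentially the same route as the paper, which obtains the corollary by ``translating'' Theorem~\ref{fun-g} with $q=\alpha$ and $q=\alpha-1$; your sign computations $2\beta+q=m\alpha$, $\beta+q=\tfrac{m+1}{2}\alpha$ (resp.\ $m\alpha-1$, $\tfrac{(m+1)\alpha-2}{2}$) and the scaling normalisation $g(0)=1$ are exactly the intended bookkeeping, and the uniqueness/rigidity step you flag is indeed what is supplied by \cite{GP} (and re-derived later in the paper via the phase plane). The only slip is cosmetic: the normalising factor should be $\sigma=g(0)^{(m-1)/2}$ rather than $g(0)^{(1-m)/2}$, which does not affect the sign of $g_\sigma'(0)$ or any conclusion.
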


If we find some $\alpha\in(1/m,2/(m+1))$ such that  $\lambda_-(\alpha)=\lambda_+(\alpha)$, we will obtain a solution $w$ with frofile $f$ defined in $\mathbb{R}$ which has compact support. But we are also interested in subsolutions, and these are obtained constructing profiles with compact support $[-a,b]$ with a bad behaviour at the interfaces $(f^m)'(-a)>0$, $(f^m)'(b)<0$. On the other hand, positive profiles will serve as supersolutions.

Thus, in order to study in more detail the solutions to the equation in \eqref{GP} we introduce the variables
\begin{equation}\label{variable}
X=\frac{\xi g'}{g},\qquad Y=\frac1m \xi^2 g^{1-m},\qquad \eta=\log\xi.
\end{equation}
We also fix the value $g(0)=1$ and consider the different values of $g'(0)$. We obtain the differential system,
$$
\left\{
\begin{array}{l}
\dot{X}=X(1-mX)+Y(q-\beta X),\\
\dot{Y}=Y(2-(m-1)X),
\end{array}\right.
$$
defined in the half-plane $Y\ge0$, where $\dot{X}=dX/d\eta$. As we have said only the values  $q=\alpha$ and $q=\alpha-1$ are of interest, with $\alpha\in(1/m,2/(m+1))$. We have two finite critical points
$$
P_1=(0,0),\qquad P_2=(1/m,0)
$$
(if $q=\alpha-1$ there exists a third critical point but it lies in the lower half-plane), and three critical points at infinity
$$
\Lambda_1=(-\infty,\infty),\qquad \Lambda_2=(-\frac{q}{\beta},\infty),\qquad\Lambda_3=(\infty,\infty).
$$

The point $P_1$ is an unstable node: we have a trajectory $\Gamma_0$ escaping this point from $\eta=-\infty$ along the vector $(q,1)$, and a family of trajectories $\Gamma_\kappa$, $\kappa\neq0$, behaving near the origin like
\begin{equation}\label{cerca_cero}
X\sim \kappa \sqrt Y.
\end{equation}
The first one produces a profile $g$ with $g'(0)=0$. The profile corresponding to each $\Gamma_\kappa$ satisfies $g'(0)=\kappa/\sqrt m$. The point $P_2$ is a saddle and plays no role at this stage.

Now fix $q=\alpha-1<0$.
We first observe that defining the energy associated to the problem
$$
E_g(\xi)=\frac12((g^m)')^2+\frac{1-\alpha}{1+m}g^{1+m},
$$
it satisfies
$$
E_g'(\xi) =-\beta m \xi g^{m-1} (g)'^2\le0.
$$
Therefore $g$ is bounded, and all the trajectories starting at $P_1$ must go to one the points at infinity $\Lambda_1$ or $\Lambda_2$. In fact $\Lambda_3$ is unstable.

The profiles satisfying \eqref{xi0} correspond to trajectories entering the point $\Lambda_1$ linearly, since they satisfy
$$
\lim_{\xi\to\xi_0}X(\xi)=-\infty,\quad\lim_{\xi\to\xi_0}Y(\xi)=\infty,\quad
\lim_{\xi\to\xi_0}\frac{Y(\xi)}{X(\xi)}=-D<0.
$$

Using  the equation
\begin{equation}\label{dYdX}
\frac{dY}{dX}=\frac{Y(2-(m-1)X)}{X(1-mX)+Y(q-\beta X)},
\end{equation}
we get that the only possible behaviours near $\Lambda_1$ are
$$
Y\sim -X,\qquad Y\sim |X|^{\frac{m-1}m}.
$$

Thus from Corollary \ref{cor-solucion-soporte-compacto} we get  that the trajectory $\Gamma_{\kappa^+}$ with $\kappa^+=\lambda_+(\alpha)\sqrt m$ joins  $P_1$ with $\Lambda_1$  satisfying $Y\sim-DX$ near $\Lambda_1$, and it is the unique trajectory with that behaviour at infinity.
All the other trajectories joining these two points enter $\Lambda_1$  below $\Gamma_{\kappa^+}$, and above this trajectory near the origin. This implies that the corresponding profiles have slopes at the origin $g'(0)<\lambda_+(\alpha)$. Observe  that this implies that $g$ vanishes at some point $b<\infty$ with $g(\xi)\sim(b-\xi)^{\frac1m}$, so $(g^m)'(b)<0$.

On the other hand, the trajectories with $\kappa>\kappa^+$ must go to $\Lambda_2$. The corresponding profiles are positive with
$$
g(\xi)\sim
\xi^{-\frac{1-\alpha}\beta}
$$
for $\xi$ large.

\begin{figure}[!ht]\begin{center}
\includegraphics[scale=.4]{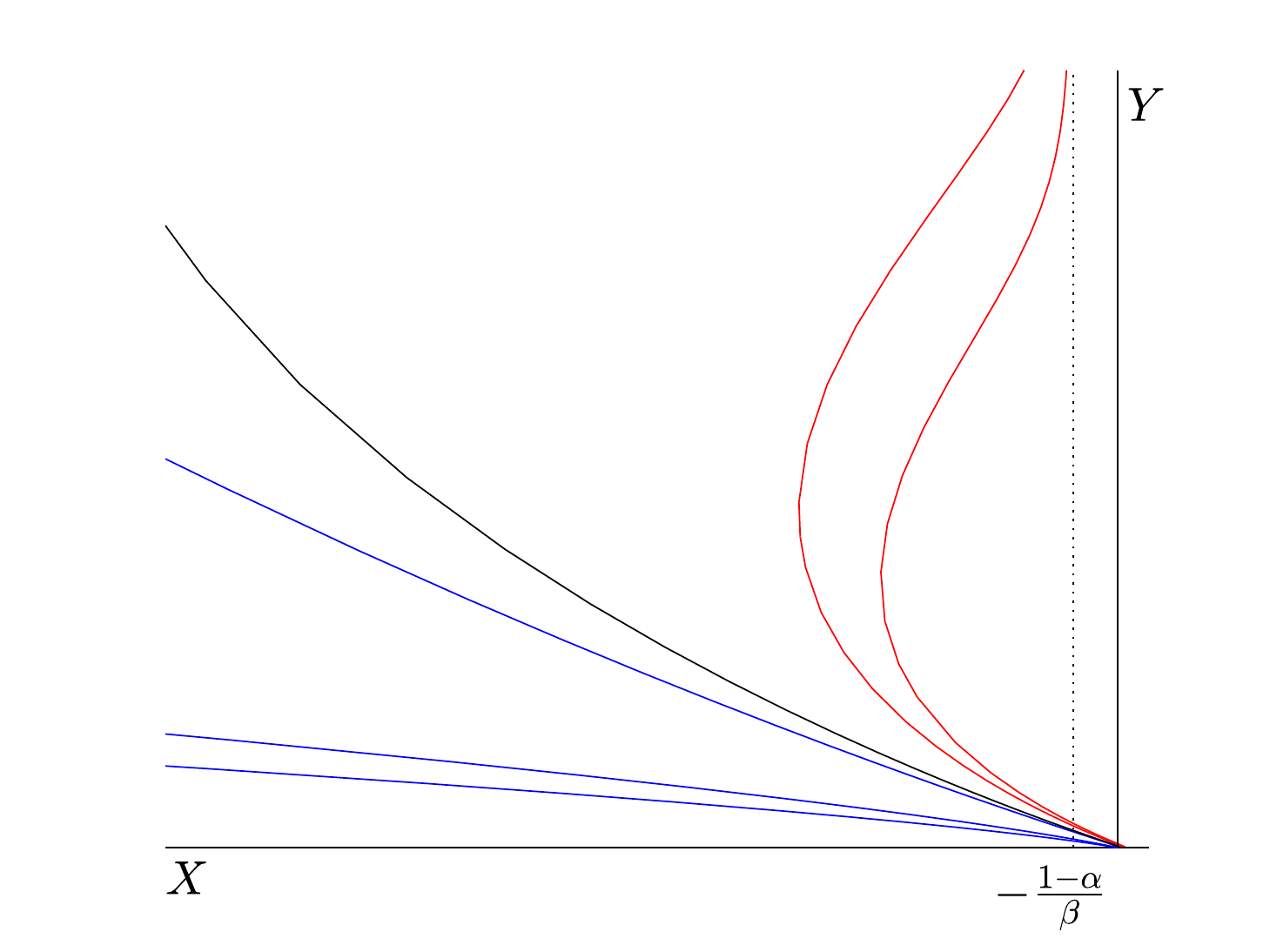}
\includegraphics[scale=.4]{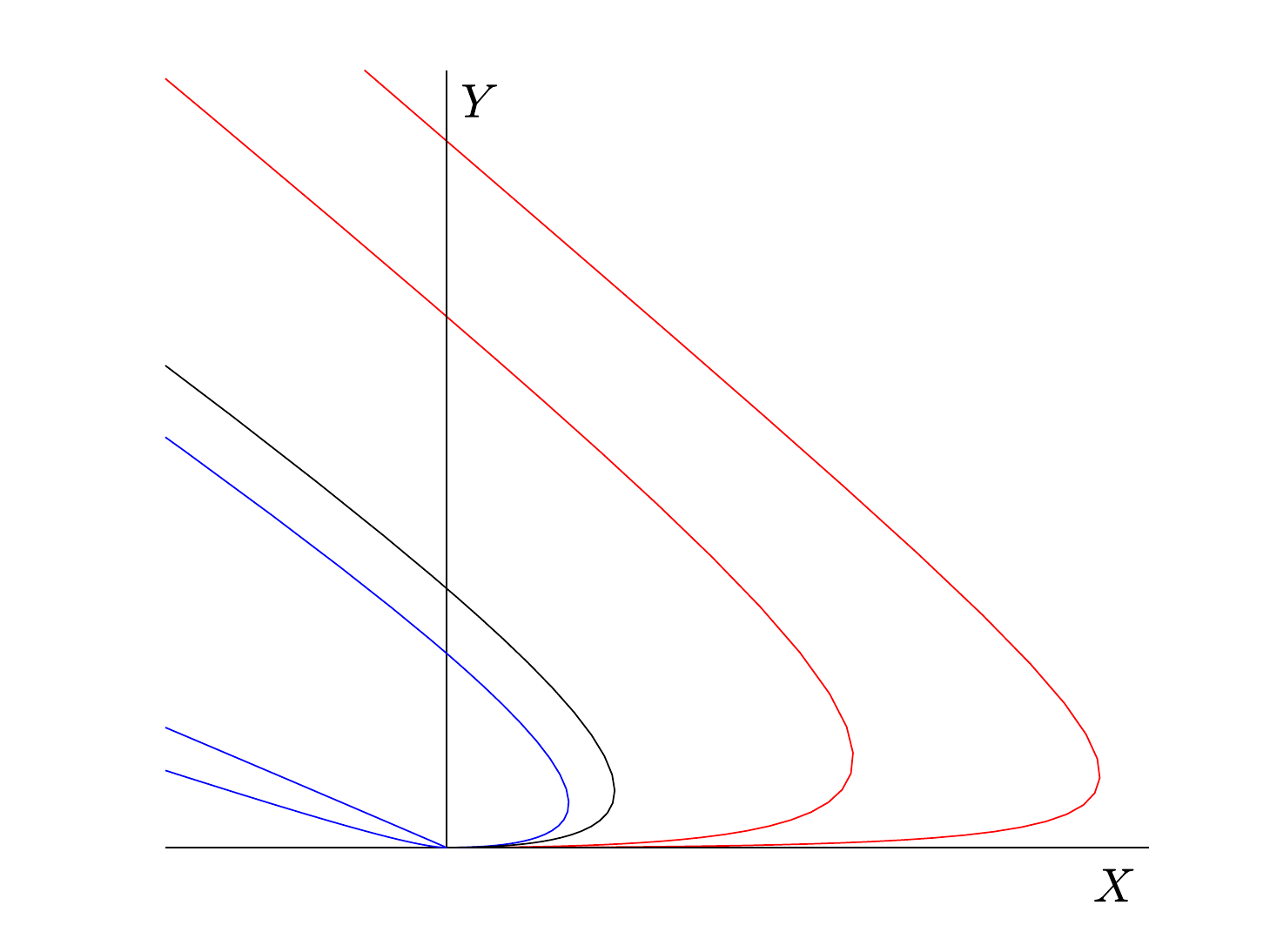}
\caption{Trajectories in the phase-plane for $q=\alpha-1$ (on the right a zoom at the origin). The black line enters $\Lambda_1$ linearly; the blue lines go to $\Lambda_1$ like $Y\sim |X|^{\frac{m-1}m}$; the red lines go to $\Lambda_2$.}
\label{fig.gup}
\end{center}
\end{figure}

In summary we have proved the following result.

\begin{lema}\label{lem-psi} Let $\psi$ be a solution of \eqref{psi} with some $\alpha\in(1/m,2/(m+1))$. There exists some $\lambda_+=\lambda_+(\alpha)>0$ such that
\begin{enumerate}
\item If $\lambda<\lambda_+$ there exists $\xi_0<\infty$ such that $\psi(\xi_0)=0>(\psi^m)'(\xi_0)$.
\item If $\lambda=\lambda_+$ there exists $\xi_0<\infty$ such that $\psi(\xi_0)=(\psi^m)'(\xi_0)=0$.
\item If $\lambda>\lambda_+$ the solution $\psi$ is positive  and $\psi\sim
\xi^{-\frac{1-\alpha}\beta}$ for $\xi$ large.
\end{enumerate}
\end{lema}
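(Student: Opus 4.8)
The plan is to run a shooting argument on the initial slope $\lambda=\psi'(0)$, reading the fate of $\psi$ directly from the phase portrait of the $(X,Y)$ system obtained above. Since $\psi(0)=1$ and $\psi'(0)=\lambda$ is finite, the orbit attached to $\psi$ leaves the unstable node $P_1$, and by \eqref{cerca_cero} it is exactly the trajectory $\Gamma_\kappa$ with $\kappa=\lambda\sqrt m$; this gives a bijection between values $\lambda>0$ and orbits emanating from $P_1$. Because $\alpha\in(1/m,2/(m+1))$, Corollary~\ref{cor-solucion-soporte-compacto} furnishes the distinguished value $\lambda_+=\lambda_+(\alpha)>0$; I write $\kappa^+=\lambda_+\sqrt m$ for the associated separatrix $\Gamma_{\kappa^+}$, and the three items of the lemma will correspond to $\kappa<\kappa^+$, $\kappa=\kappa^+$ and $\kappa>\kappa^+$.

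The next step is to convert each admissible endpoint of an orbit into a statement about the interface of $\psi$, by inserting the asymptotics into $X=\xi\psi'/\psi$ and $Y=\frac1m\xi^2\psi^{1-m}$. An orbit reaching $\Lambda_1$ along the linear branch $Y\sim-DX$ corresponds to \eqref{xi0}, so $\psi(\xi_0)=0$ with $\psi\sim(\xi_0-\xi)^{1/(m-1)}$ and hence $(\psi^m)'(\xi_0)=0$, which is item (2). An orbit reaching $\Lambda_1$ along the other admissible branch $Y\sim|X|^{(m-1)/m}$ forces $\psi$ to vanish at a finite $b$ like $\psi\sim(b-\xi)^{1/m}$, so that $\psi^m$ is comparable to $b-\xi$ and $(\psi^m)'(b)<0$, which is item (1). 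Finally an orbit tending to $\Lambda_2$ produces a positive profile with $\psi\sim\xi^{-(1-\alpha)/\beta}$, which is item (3). That these are the only possibilities---$\Lambda_3$ being excluded and the two stated branches at $\Lambda_1$ exhausting the local behaviour there---was already established from the boundedness of the energy and from equation \eqref{dYdX}.

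It then remains to match the parameter range of $\kappa$ with the endpoint of $\Gamma_\kappa$, and this is where the real work lies. Let $S_2$ be the set of $\kappa>0$ whose orbit ends at $\Lambda_2$ (positive profiles) and $S_1$ the set whose orbit ends at $\Lambda_1$; by the energy bound every orbit belongs to one of them. Using that distinct orbits of the autonomous system cannot cross, together with the ordering of the $\Gamma_\kappa$ near $P_1$ given by \eqref{cerca_cero} (larger $\kappa$ sitting lower in $Y$ for a fixed $X$), one shows that $S_2$ is an up-set, i.e. $\kappa\in S_2$ and $\kappa'>\kappa$ imply $\kappa'\in S_2$, and that it is open; hence there is a single threshold $\kappa_*=\inf S_2=\sup S_1$. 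The orbit at the threshold must be the one entering $\Lambda_1$ linearly, which by the uniqueness part of Corollary~\ref{cor-solucion-soporte-compacto} is precisely $\Gamma_{\kappa^+}$, so $\kappa_*=\kappa^+$. Consequently every $\kappa<\kappa^+$ lands in $S_1$ but, not being the separatrix, enters $\Lambda_1$ via the branch $Y\sim|X|^{(m-1)/m}$---item (1); the value $\kappa=\kappa^+$ gives the linear entry---item (2); and every $\kappa>\kappa^+$ lands in $S_2$---item (3). I expect the main obstacle to be exactly this separatrix step: one must rule out both that some orbit with $\kappa<\kappa^+$ escapes to $\Lambda_2$ and that some orbit with $\kappa>\kappa^+$ reaches $\Lambda_1$, which requires combining the non-crossing property with the sharp local description near $\Lambda_1$ coming from \eqref{dYdX}, so that the linear and the $Y\sim|X|^{(m-1)/m}$ entries are genuinely separated by $\Gamma_{\kappa^+}$.
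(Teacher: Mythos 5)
Your argument is correct and is essentially the paper's own: Lemma~\ref{lem-psi} is stated there precisely as a summary of this phase-plane analysis --- the classification of entries into $\Lambda_1$ and $\Lambda_2$ via the energy bound and equation~\eqref{dYdX}, the identification of the separatrix $\Gamma_{\kappa^+}$ with the compactly supported profile of Corollary~\ref{cor-solucion-soporte-compacto}, and the ordering of the orbits $\Gamma_\kappa$ near $P_1$ given by~\eqref{cerca_cero}. The openness/non-crossing step you flag as the main obstacle is handled at the same (terse) level of detail in the paper, so nothing is missing relative to the source.
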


We comment by passing what is the behaviour in the case $\alpha=1/m$. If we trace back the unique trajectory entering $\Lambda_1$ linearly, we see that it goes to $P_2$. In fact integrating the equation between $\xi$ and $a$ we get
$$
-(g^m)'(\xi)=\beta \xi g(\xi)-\beta \int_\xi^a g(s)ds.
$$
A second integration gives
$$
g^m(\xi)=\beta \xi \int_\xi^ag(s)ds.
$$
Then $\lim_{\xi\to0} Y(\xi) =0$ trivially, while
$$
\lim_{\xi\to0} X(\xi)=\lim_{\xi\to0}\frac{\xi(g^m)'(\xi)}{mg^m(\xi)}=
\frac1m\lim_{\xi\to0}\frac{\xi\left(-\beta \xi g(\xi)+\beta \int_\xi^a g(s)ds\right)}{\beta \xi \int_\xi^ag(s)ds}=\frac1m.
$$
Therefore all the trajectories starting at $P_1$ must go to $\Lambda_1$ like $Y\sim |X|^{\frac{m-1}m}$. We obtain profiles with a bad interface behaviour for every value of $\lambda$.

\

The phase-space \eqref{variable} is studied in the same way in the case $q=\alpha>0$.
Following the same argument for problem \eqref{phi} we obtain:

\begin{lema}\label{lem-phi} Let $\phi$ be a solution of \eqref{phi} with some $\alpha\in(0,2/(m+1))$. There exist  $\lambda_-=\lambda_-(\alpha)>0$ such that:
\begin{enumerate}
\item If $\lambda>\lambda_-$ there exists $\xi_1<\infty$ such that $\phi(\xi_1)=0>(\phi^m)'(\xi_1)$.
\item If $\lambda=\lambda_-$ there exists $\xi_1<\infty$ such that $\phi(\xi_1)=(\phi^m)'(\xi_1)=0$.
\item If $\lambda<\lambda_-$ the solution $\phi$ is positive and unbounded.
\end{enumerate}
Moreover $\lambda_-(\alpha)=K /\sqrt{\alpha}$.
\end{lema}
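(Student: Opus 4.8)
The plan is to run the phase-plane analysis of \eqref{variable} exactly as in the proof of Lemma~\ref{lem-psi}, now with $q=\alpha>0$, reading off the three regimes from the trajectories that leave $P_1$. The finite critical points are again $P_1=(0,0)$, an unstable node, and $P_2=(1/m,0)$, a saddle that plays no role; in the half-plane $Y\ge0$ only these and the points at infinity $\Lambda_1,\Lambda_2,\Lambda_3$ are relevant. Through $\phi(0)=1$, $\phi'(0)=-\lambda$, a solution of \eqref{phi} is the trajectory escaping $P_1$ with $X\sim\kappa\sqrt Y$, where $\kappa=-\sqrt m\,\lambda<0$ by \eqref{cerca_cero}, so that raising $\lambda$ sweeps $\kappa$ down from $0$ to $-\infty$. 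The existence and uniqueness of the threshold are already furnished by Corollary~\ref{cor-solucion-soporte-compacto}(1): $\lambda_-(\alpha)$ is the unique value for which \eqref{phi} has a decreasing, compactly supported profile, i.e.\ the separatrix joining $P_1$ to $\Lambda_1$ with the linear entry $Y\sim -DX$, the good interface $(\phi^m)'(\xi_1)=0$ being built into Theorem~\ref{fun-g}. This is part~(2).

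For part~(1) I would use that trajectories do not cross, so the family is monotone in $\kappa$ and depends continuously on it. Integrating \eqref{dYdX} near $\Lambda_1$ shows, exactly as for $\psi$, that $\Lambda_1$ admits only the two entry modes $Y\sim -DX$ and $Y\sim|X|^{(m-1)/m}$; the separatrix realizes the first, while every trajectory on its compact-support side enters through the second, which forces $\phi(\xi)\sim(\xi_1-\xi)^{1/m}$ and hence $\phi(\xi_1)=0>(\phi^m)'(\xi_1)$. A steeper initial descent means $\kappa$ more negative, i.e.\ larger $\lambda$, and by the ordering this is the compact-support side; thus that side is precisely $\lambda>\lambda_-$, giving~(1).

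Part~(3), $\lambda<\lambda_-$, is where the real work lies and the step I expect to be the main obstacle. The energy $E_\phi=\frac12((\phi^m)')^2-\frac{\alpha m}{m+1}\phi^{m+1}$ is still nonincreasing since $\beta>0$, but its potential is now negative, so it no longer bounds $\phi$ from above, which is exactly what permits unbounded profiles. The clean fact I would lean on is that a globally positive profile has no interior maximum: at any point where $\phi'=0$ and $\phi>0$ the equation gives $(\phi^m)''=\alpha\phi>0$, so every interior critical point is a strict minimum. Hence such a $\phi$ is eventually monotone; an eventually decreasing branch is excluded because the far-field point $\Lambda_2$, where $q-\beta X=0$, sits at $X=\alpha/\beta=2/(m-1)>0$, that is at $\phi'>0$, so there is no decaying exit, and a putative finite positive limit, fed back into the equation, contradicts the monotonicity of $(\phi^m)'$. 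Therefore a profile with $\lambda<\lambda_-$, which by the ordering never reaches $\Lambda_1$, is eventually increasing and unbounded. The delicate point is the behaviour at $\Lambda_2$: for $q=\alpha$ this point is resonant, its abscissa $2/(m-1)$ coinciding with the line $X=2/(m-1)$ on which $\dot Y=0$, so the power law $\xi^{2/(m-1)}$ degenerates; since the statement only requires positivity and unboundedness, I would argue qualitatively through the no-maximum principle rather than chase the exact rate.

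Finally, the dependence of $\lambda_-$ on $\alpha$ comes from scaling. Setting $\eta=\sqrt\alpha\,\xi$ and $\Psi(\eta)=\phi(\xi)$ turns \eqref{phi} into the $\alpha$-free problem $(\Psi^m)''+\frac{m-1}2\,\eta\Psi'-\Psi=0$ with $\Psi(0)=1$ and $\Psi'(0)=-\lambda/\sqrt\alpha$. Since this reduced problem is independent of $\alpha$, its threshold slope is a number $\sigma_*=\sigma_*(m)$, and the matching $-\lambda/\sqrt\alpha=\sigma_*$ determines $\lambda_-(\alpha)$ as a fixed multiple of $\sqrt\alpha$, the constant $K=|\sigma_*|$ depending only on $m$, which fixes the asserted $\alpha$-scaling of $\lambda_-$.
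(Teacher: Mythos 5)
Your phase-plane treatment of the trichotomy is essentially what the paper intends: its own proof of Lemma~\ref{lem-phi} consists of the single remark that the phase space \eqref{variable} ``is studied in the same way in the case $q=\alpha>0$'', and your items (1)--(3) fill this in correctly --- $P_1$ as the source with $\kappa=-\sqrt m\,\lambda$, the separatrix entering $\Lambda_1$ linearly identified with the compactly supported decreasing profile of Corollary~\ref{cor-solucion-soporte-compacto}(1), the sublinear entry $Y\sim|X|^{(m-1)/m}$ forcing $\phi\sim(\xi_1-\xi)^{1/m}$ and hence $(\phi^m)'(\xi_1)<0$, and, for $\lambda<\lambda_-$, the no-interior-maximum observation together with the location of $\Lambda_2$ at $X=q/\beta=2/(m-1)>0$ to rule out a decaying exit. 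You are also right that $\Lambda_2$ is resonant for $q=\alpha$ (its abscissa lies on the line $\dot Y=0$) and that only positivity and unboundedness, not the exact far-field rate, are needed; the only loose end there is the eventually-increasing-but-bounded alternative, which is dispatched by the same integrated identity you invoke for the decreasing branch.

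The last step, however, does not prove the statement as written. Your rescaling $\eta=\sqrt\alpha\,\xi$, $\Psi(\eta)=\phi(\xi)$ is computed correctly and reduces \eqref{phi} to $(\Psi^m)''+\tfrac{m-1}2\eta\Psi'-\Psi=0$, $\Psi(0)=1$, $\Psi'(0)=-\lambda/\sqrt\alpha$, i.e.\ to problem \eqref{phi} with $\alpha=1$; the threshold relation is therefore $\lambda_-(\alpha)/\sqrt\alpha=\lambda_-(1)$, that is $\lambda_-(\alpha)=K\sqrt\alpha$. This is the \emph{reciprocal} power of the claimed $\lambda_-(\alpha)=K/\sqrt\alpha$, yet you assert that it ``fixes the asserted $\alpha$-scaling''. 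It does not: as it stands your argument contradicts the ``moreover'' clause rather than establishing it. The scaling itself is airtight (and $\lambda_-(\alpha)\to0$ as $\alpha\to0$ is the only sensible limit, since the absorption term that produces the tangential touchdown disappears), so the formula in the statement is very likely a misprint for $K\sqrt\alpha$; but you must say so explicitly and prove the corrected identity, not paper over the sign of the exponent.
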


We now study the matching.

\begin{lema}\label{lema-alpha*}
There exists a unique $\alpha_*\in(1/m,2/(m+1))$ such that $\lambda_-(\alpha_*)=\lambda_+(\alpha_*)$.
\end{lema}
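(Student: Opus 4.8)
The plan is to obtain $\alpha_*$ as the unique crossing point of the two shooting thresholds $\lambda_-(\alpha)$ and $\lambda_+(\alpha)$ furnished by Lemmas~\ref{lem-phi} and~\ref{lem-psi}, by a continuity and monotonicity argument. Set $h(\alpha)=\lambda_-(\alpha)-\lambda_+(\alpha)$ on the interval $I=(1/m,2/(m+1))$. I would first record that $h$ is continuous on $I$: both thresholds depend continuously on $\alpha$, since the trajectories of the planar system associated to \eqref{variable} depend continuously on the parameters $q\in\{\alpha,\alpha-1\}$ and $\beta=\tfrac{m-1}2\alpha$, and each threshold is selected by the separatrix that enters the point $\Lambda_1$ linearly, which moves continuously with the field.

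Next I would determine the sign of $h$ at the two endpoints of $I$. At the right endpoint, Corollary~\ref{cor-solucion-soporte-compacto} gives $\lambda_+(\alpha)\to\lambda_+(2/(m+1))=0$, whereas $\lambda_-$ remains strictly positive by Lemma~\ref{lem-phi}; hence $h(\alpha)\to\lambda_-(2/(m+1))>0$. At the left endpoint I claim $\lambda_+(\alpha)\to+\infty$ as $\alpha\to 1/m^+$. This is precisely the degeneration described after Lemma~\ref{lem-psi}: at $\alpha=1/m$ the separatrix entering $\Lambda_1$ linearly has disappeared, every trajectory leaving $P_1$ reaches $\Lambda_1$ along $Y\sim|X|^{(m-1)/m}$ and produces a bad interface for all $\lambda$, so the selecting slope $\kappa^+=\lambda_+\sqrt m$ must escape to infinity as $\alpha\downarrow 1/m$. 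Since $\lambda_-$ stays bounded there, $h(\alpha)\to-\infty$. By the intermediate value theorem there is some $\alpha_*\in I$ with $h(\alpha_*)=0$, i.e. $\lambda_-(\alpha_*)=\lambda_+(\alpha_*)$, which yields existence.

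For uniqueness I would prove that $h$ is strictly monotone on $I$. The factor $\lambda_-$ is given explicitly by Lemma~\ref{lem-phi} and is monotone in $\alpha$, so the whole issue is the strict monotonicity of $\alpha\mapsto\lambda_+(\alpha)$; granted this, and with the directions of monotonicity of $\lambda_\pm$ opposing each other (which the explicit form of $\lambda_-$ makes transparent), $h$ is strictly monotone and the zero is unique. I expect the monotonicity of $\lambda_+$ to be the main obstacle. The natural route is a comparison/sliding argument for the phase-plane system \eqref{dYdX}: as $\alpha$ increases, the field varies monotonically through both $q=\alpha-1$ and $\beta$, and one wants to conclude that the selecting trajectory $\Gamma_{\kappa^+}$ joining $P_1$ to $\Lambda_1$ with the behaviour $Y\sim-DX$ slides monotonically, forcing $\kappa^+$, and hence $\lambda_+$, to be strictly monotone.

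The delicate point is exactly this monotone dependence of the selecting separatrix on $\alpha$. Because the finite saddle $P_2$, the node $P_1$ and the critical point $\Lambda_2=(-q/\beta,\infty)$ all move with $\alpha$, the ordering of two trajectories for nearby parameters does not follow from a single monotone shift of the field and must be handled with care, for instance by a Gronwall-type estimate along the orbits of \eqref{dYdX}, or by excluding tangential crossings of the two separatrices and propagating a strict ordering from near $\Lambda_1$ back to the origin. Once this strict monotonicity is established, existence and uniqueness of $\alpha_*$ follow as above.
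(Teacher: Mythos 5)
Your existence argument is essentially the paper's: both set up a continuous function of $\alpha$ measuring the difference $\lambda_+-\lambda_-$ and apply the intermediate value theorem, with the sign at $\alpha=2/(m+1)$ coming from $\lambda_+(2/(m+1))=0$. At the left endpoint the paper argues a little differently: it does not claim $\lambda_+(\alpha)\to\infty$, only that the single profile of \eqref{psi} with $\lambda=\lambda_-(1/m)$ has a bad interface at $\alpha=1/m$ (where no compactly supported solution of \eqref{psi} exists at all), and that this open condition persists for $\alpha=1/m+\varepsilon$ by continuous dependence, which already gives $\lambda_-(\alpha)<\lambda_+(\alpha)$ there. Your stronger claim that $\lambda_+(\alpha)\to+\infty$ as $\alpha\downarrow 1/m$ would need a compactness step (a bounded sequence of selecting slopes would converge to a good trajectory at $\alpha=1/m$, contradicting the degeneration); it is plausible but not supplied, and it is more than is needed.

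The genuine gap is in uniqueness. Your route hinges on strict monotonicity of $\alpha\mapsto\lambda_+(\alpha)$, which you yourself identify as ``the delicate point'' and do not prove: the sliding or Gronwall argument for the separatrix of \eqref{dYdX} is left entirely open, so the proof is incomplete precisely at its crucial step. The surrounding logic is also internally inconsistent: $\lambda_-(\alpha)=K/\sqrt{\alpha}$ is decreasing, and by your own endpoint analysis $\lambda_+$ runs from $+\infty$ at $1/m$ down to $0$ at $2/(m+1)$, so if $\lambda_+$ is monotone it is decreasing as well --- the two thresholds do not move in opposing directions, and a difference of two decreasing functions need not be monotone, so even granting monotonicity of $\lambda_+$ the uniqueness would not follow. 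The paper avoids the phase plane entirely here: if matched profiles existed for $\alpha_1<\alpha_2$, the corresponding compactly supported self-similar solutions $w_1,w_2$ of \eqref{1.1} can be ordered at time zero by a time shift, $w_1(\cdot,0)\ge w_2(\cdot,-t_1)$; parabolic comparison then forces $f_1(0)e^{\alpha_1 t}\ge f_2(0)e^{\alpha_2(t-t_1)}$ for all $t>0$, which fails for $t$ large. You should replace the monotonicity program by this comparison argument, or else actually carry out the monotone dependence of the separatrix, which is considerably harder.
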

\begin{proof}
Define the continuous function $h(\alpha)=\lambda_+(\alpha)-\lambda_-(\alpha)$. It is clear that $h(2/(m+1))<0$. On the other hand,
taking $\lambda=\lambda_-(1/m)$ in \eqref{phi} and \eqref{psi} we obtain a profile which crosses the axis at some positive point with bad interface behaviour. Therefore by continuous dependence of the profile with respect to the parameter $\alpha$ we have the same behaviour for $\alpha=\varepsilon+1/m$. This implies $h(\varepsilon+1/m)>0$. Then there exists $\alpha_*\in(1/m,2/(m+1))$ with $h(\alpha_*)=0$. The uniqueness follows by comparison. Indeed, if we assume that $h(\alpha_1)=0=h(\alpha_2)$ with $\alpha_1<\alpha_2$ we have that the solutions $w_1$ with profile $f_1,f_2$ given in \eqref{profile-f} with $\alpha=\alpha_1,\alpha_2$ satisfy
$$
w_1(x,0)=f_1(x)>e^{-\alpha_2 t}f_2(xe^{\beta_2 t})=w_2(x,-t_1)
$$
for some $t_1>0$.
This implies  $w_1(x,t)\ge w_2(x,t-t_1)$ for any $t>0$. In particular at  $x=0$ this means $f_1(0)e^{\alpha_1 t}\ge f_2(0)e^{\alpha_2 (t-t_1)}$, which is impossible if  $t$ is large.
\end{proof}

\begin{teo} Let $u$ be the solution to problem \eqref{1.1} with $p=1<m$.
Let $\alpha_*$ be given in Lemma \ref{lema-alpha*} and define $\gamma_*=\frac{2(1-\alpha_*)}{(m-1)\alpha_*}$.
\begin{enumerate}
\item If there exists some $1<\gamma<\gamma_*$ such that $u_0(x) \sim x^{-\gamma}$ as $x\to\infty$, then
$$
C_1 e^{\alpha(\gamma) t}\le u(x,t)\le C_2 e^{\alpha(\gamma) t},
$$
where
$$
\alpha(\gamma)=\frac{2}{2+\gamma(m-1)}.
$$
\item If $\limsup_{x\to\infty}x^{-\gamma_*}u_0(x)<\infty$, then for all $\varepsilon>0$
$$
C_1 e^{\alpha_* t}\le u(x,t)\le C(\varepsilon) e^{(\alpha_*+\varepsilon) t}.
$$
\item If $u_0$ has support bounded from the right then
$$
C_1 e^{\alpha_* t}\le u(x,t)\le C_2 e^{\alpha_* t}.
$$
The above estimates are uniform in compact subsets of $\mathbb R$ for $t$ large.
\end{enumerate}
\end{teo}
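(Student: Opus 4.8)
The plan is to establish both the upper and lower bounds in all three cases by comparison with the self-similar functions $w(x,t)=e^{\alpha t}f(xe^{-\beta t})$ of \eqref{selsimlar_p1}, whose profile $f$ is assembled as in \eqref{profile-f} from a right branch $\psi$ solving \eqref{psi} and a left branch $\phi$ solving \eqref{phi}, where $\beta=\frac{m-1}2\alpha>0$. The exponent $\alpha$ is dictated by the decay of $u_0$ at $+\infty$: by Lemma~\ref{lem-psi}(3) a positive right branch decays like $\psi(\xi)\sim\xi^{-\gamma(\alpha)}$ with $\gamma(\alpha)=\frac{1-\alpha}\beta=\frac{2(1-\alpha)}{(m-1)\alpha}$, and inverting this relation gives exactly $\alpha=\alpha(\gamma)=\frac2{2+\gamma(m-1)}$, with the monotonicity $\gamma<\gamma_*\Longleftrightarrow\alpha(\gamma)>\alpha_*$. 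Throughout I use that, since $p=1$, the reaction is Lipschitz and the standard comparison principle for \eqref{1.1} applies, and that on any compact set $f(xe^{-\beta t})\to f(0)=1$ as $t\to\infty$, so that a super/subsolution of this form produces precisely a rate $e^{\alpha t}$ uniform on compact sets.

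For the upper bounds I construct positive supersolutions, recalling from Lemma~\ref{lema-alpha*} that $\lambda_+(\alpha)<\lambda_-(\alpha)$ exactly when $\alpha>\alpha_*$. In case (1), where $\alpha(\gamma)>\alpha_*$, I pick a single slope $\lambda\in(\lambda_+(\alpha),\lambda_-(\alpha))$: then $\psi$ is positive with $\psi\sim\xi^{-\gamma}$ (Lemma~\ref{lem-psi}(3)) and $\phi$ is positive (Lemma~\ref{lem-phi}(3)), and the common slope makes $f$ a $C^1$ profile with continuous flux $(f^m)'$ across $\xi=0$, so $w$ is an exact weak solution of \eqref{1.1}. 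Its right tail matches $u_0\sim x^{-\gamma}$ and its left branch tends to $+\infty$, so a time shift $w(\cdot,t_0)$ with $t_0$ large dominates $u_0$ and comparison gives $u(x,t)\le w(x,t+t_0)\le C_2e^{\alpha(\gamma)t}$. In case (2) the exact matching is impossible at $\alpha_*$ (there $\lambda_+=\lambda_-$), so I work at $\alpha_*+\varepsilon>\alpha_*$: the corresponding supersolution decays like $\xi^{-\gamma(\alpha_*+\varepsilon)}$ with $\gamma(\alpha_*+\varepsilon)<\gamma_*$, hence dominates any datum with $\limsup_{x\to\infty}x^{-\gamma_*}u_0<\infty$, yielding $u\le C(\varepsilon)e^{(\alpha_*+\varepsilon)t}$. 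In case (3) I use instead the compactly supported exact solution $w_*$ at $\alpha=\alpha_*$ from Corollary~\ref{cor-solucion-soporte-compacto} (both interfaces with vanishing flux, matched $C^1$); for $t_0$ large its support and amplitude dominate $u_0$ on the reaction side, while the remote left tail, governed by pure diffusion, stays bounded and does not affect the rate, giving $u\le C_2e^{\alpha_*t}$.

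For the lower bounds I construct subsolutions, using that $w$ is a subsolution when its flux jumps upward at $\xi=0$, i.e. $(f^m)'(0^+)\ge(f^m)'(0^-)$, equivalently $\lambda_\psi\ge\lambda_\phi$ for the slopes $\psi'(0)=\lambda_\psi$, $\phi'(0)=-\lambda_\phi$, and when it meets a free boundary with the bad sign $(f^m)'<0$. In case (1) I take $\psi$ positive with $\psi\sim\xi^{-\gamma}$ (a large slope $\lambda_\psi>\lambda_-(\alpha)$, Lemma~\ref{lem-psi}(3)) and $\phi$ compactly supported with $(\phi^m)'<0$ at its interface (slope $\lambda_\phi>\lambda_-(\alpha)$, Lemma~\ref{lem-phi}(1)), arranged so that $\lambda_\psi\ge\lambda_\phi$; the resulting $w$ is a genuine subsolution whose right tail matches $u_0$. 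After letting the solution become positive near the origin, a time shift $w(\cdot,\,\cdot-t_0)$ fits below $u$, so $u(x,t)\ge C_1e^{\alpha(\gamma)t}$. In cases (2) and (3) I use the compactly supported exact solution $w_*$ at $\alpha=\alpha_*$ as a subsolution: a small, concentrated time shift $w_*(\cdot,-t_0)$ lies below $u(\cdot,t_1)$ for suitable $t_1$, and comparison yields $u(x,t)\ge C_1e^{\alpha_*t}$. In all cases $f(xe^{-\beta t})\to f(0)=1$ gives the stated estimates uniformly on compact subsets of $\mathbb R$.

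The main difficulty is not the comparison itself but verifying that these matched profiles are bona fide weak sub/supersolutions of \eqref{1.1} across the two kinds of singularities at once: the corner at $\xi=0$, where the discontinuity of $a(x)$ together with the jump of $(f^m)'$ must have the correct sign to produce the right distributional inequality, and the free boundaries, where the degeneracy of $(u^m)_{xx}$ forces the interface condition $(f^m)'\lessgtr0$ to be read off precisely from the phase-plane analysis of Lemmas~\ref{lem-psi} and~\ref{lem-phi}. The borderline case (2) is the subtlest point: at $\alpha=\alpha_*$ one has $\lambda_+(\alpha_*)=\lambda_-(\alpha_*)$, so no positive supersolution with the critical tail $\xi^{-\gamma_*}$ exists, which is exactly why the upper bound there must be relaxed to $e^{(\alpha_*+\varepsilon)t}$ while the lower bound remains $e^{\alpha_*t}$.
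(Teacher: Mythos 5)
Your proposal is correct and follows essentially the same route as the paper: comparison with the matched self-similar profiles $e^{\alpha t}f(xe^{-\beta t})$ built from Lemmas \ref{lem-psi} and \ref{lem-phi}, using $w_\lambda$ with $\lambda\in(\lambda_+(\alpha),\lambda_-(\alpha))$ for the upper bounds in cases (1)--(2) (with $\alpha=\alpha_*+\varepsilon$ in case (2)), the compactly supported exact solution $w_*$ at $\alpha_*$ for case (3) and for the lower bounds in (2)--(3), and appropriate time shifts. The only (harmless) deviation is in the lower bound of case (1), where the paper simply takes the exact solution $w_-$ with $\lambda=\lambda_-(\alpha)$ (left branch with clean interface), whereas you build a genuine subsolution with a bad interface and a flux jump at $\xi=0$; both work.
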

\begin{proof}
The proof follows by comparison with the self-similar functions constructed before.  We define:

$\bullet$ $w_*$ the self-similar function given in \eqref{selsimlar_p1} with $\alpha=\alpha_*$. It is a solution to \eqref{1.1} with compact support
$$
\mbox{supp}(w(\cdot,t))=[-K_- e^{\beta t},K_+ e^{\beta t}].
$$

$\bullet$ For $\alpha\in(\alpha_*,2/(m+1))$, which implies $\lambda_+(\alpha)<\lambda_-(\alpha)$, we consider $w_-$ the self-similar function given in \eqref{selsimlar_p1} with $\lambda=\lambda_-(\alpha)$. It is a solution to \eqref{1.1} with support bounded from the left
$$
\mbox{supp}(w(\cdot,t))=[-K_1 e^{\beta t},\infty),\qquad \lim_{x\to\infty}x^{\frac{2(1-\alpha)}{(m-1)\alpha}}w(x,t)=K_2 e^t.
$$

In this case, we also consider $w_\lambda$ the self-similar function given in \eqref{selsimlar_p1} with $\lambda_+(\alpha)<\lambda<\lambda_-(\alpha)$. It is a positive solution to \eqref{1.1} such that
$$
\lim_{x\to\infty}x^{\frac{2(1-\alpha)}{(m-1)\alpha}}w_\lambda(x,t)=K_2 e^t, \qquad \lim_{x\to-\infty}w_\lambda(x,t)= \infty.
$$
We now consider the different cases in the statement of the theorem.

\begin{enumerate}
\item $u_0(x)\sim x^{-\gamma}$  with $1<\gamma<\gamma_*$. Taking $\alpha=\alpha(\gamma)\in(\alpha_*,2/(m+1))$ we have $\lambda_+(\alpha)<\lambda_-(\alpha)$, and the functions $u_0$, $w_-$ and $w_\lambda$ have the same behaviour at infinity. Then there exists $t_1$ large  enough such that
    $$
    w_-(x,-t_1)\le u_0(x) \le w_{\lambda}(x,t_1),
    $$
    and by comparison
    $$
    w_-(x,t-t_1)\le u(x,t) \le w_{\lambda}(x,t+t_1).
    $$
    The grow-up rate follows.
\item $u_0(x)\le x^{-\gamma_*}$. The lower bound follows by comparison with $w_*(x,t-t_1)$. For the upper bound we compare with $w_\lambda(x,t+t_1)$ with $\alpha=\alpha_*+\delta$, $\delta>0$ small.
\item $u_0$ with compact support. We compare from below as in the previous case, and from above with $w_*(x,t+t_1)$.
\end{enumerate}
\end{proof}

We observe that for any initial value $u_0$ the grow-up rate is always exponential, like for global reaction $a(x)=1$, but with an exponent strictly smaller $\alpha\le\alpha_*<2/(m+1)$. In the case  of a localized reaction, $a(x)=\mathds{1}_{(-L,L)}$, the grow-up was polynomial.

\

The second case to consider when $p=1$ is $m=1$, where things are more or less explicit.

\subsubsection{\underline{Linear diffusion $m=1$}}

\begin{lema}
Let $u$ be the solution to \eqref{1.1} with $p=m=1$. Then,
$$
C_\varepsilon e^{(1-\varepsilon)t}\le u(x,t)\le C_2 e^{t},
$$
uniformly in compact subsets of $\mathbb R$.
\end{lema}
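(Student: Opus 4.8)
The plan is to prove the two bounds separately, the upper one being essentially free and the lower one being the content.

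For the upper bound I would simply invoke comparison, which is now legitimate because for $m=p=1$ the equation in \eqref{1.1} is linear and the reaction Lipschitz. Since $a(x)\le 1$, the spatially homogeneous function $V(x,t)=\|u_0\|_\infty e^{t}$ satisfies $V_t-V_{xx}-a(x)V=(1-a(x))\|u_0\|_\infty e^{t}\ge0$ and $V(\cdot,0)\ge u_0$, so it is a supersolution and $u(x,t)\le C_2e^{t}$ with $C_2=\|u_0\|_\infty$. This is exactly \eqref{guprate-a=1}.

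For the lower bound I would specialize the self-similar subsolutions \eqref{selsimlar_p1}--\eqref{phi} to $m=1$, where $\beta=\frac{m-1}2\alpha=0$, so that $w(x,t)=e^{\alpha t}f(x)$ with $f$ as in \eqref{profile-f}. The profile problems \eqref{psi} and \eqref{phi} reduce to the constant-coefficient linear equations $\psi''+(1-\alpha)\psi=0$ and $\phi''-\alpha\phi=0$, which I can integrate explicitly: for $0<\alpha<1$,
\[
\psi(\xi)=\cos(\sqrt{1-\alpha}\,\xi)+\tfrac{\lambda}{\sqrt{1-\alpha}}\sin(\sqrt{1-\alpha}\,\xi),\qquad
\phi(\xi)=\cosh(\sqrt{\alpha}\,\xi)-\tfrac{\lambda}{\sqrt{\alpha}}\sinh(\sqrt{\alpha}\,\xi).
\]
The choice $\psi'(0)=\lambda=-\phi'(0)$ makes $f$ of class $C^1$ at the origin, and a direct computation shows that the one-sided values of $f''$ at $x=0$ ($f''(0^+)=\alpha-1$, $f''(0^-)=\alpha$) are compensated by the jump of $a$, so $w$ solves the equation across $x=0$. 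For any $\lambda>\sqrt{\alpha}$ the oscillatory $\psi$ has a first zero $b\in\big(\tfrac{\pi}{2\sqrt{1-\alpha}},\tfrac{\pi}{\sqrt{1-\alpha}}\big)$ with $\psi'(b)<0$, while $\phi$ has a zero $\xi_1$ with $\phi'(\xi_1)<0$; truncating $f$ by zero outside $[-\xi_1,b]$ then produces convex corners at both interfaces, hence a genuine subsolution of \eqref{1.1}.

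The virtue of the explicit formulas is that the support $[-\xi_1,b]$ can be made to contain any prescribed interval, since $b\to\infty$ as $\alpha\to1$ and $\xi_1\to\infty$ as $\lambda\to\sqrt{\alpha}^+$. Thus, given $\varepsilon>0$ and a compact set $K$, I fix $\alpha\in[1-\varepsilon,1)$ close enough to $1$ and $\lambda$ close enough to $\sqrt{\alpha}$ so that $K\subset\subset(-\xi_1,b)$. Because $u_0\ge0$ is nontrivial, the strong maximum principle gives $u(\cdot,t_*)>0$ for some $t_*>0$, hence $u(\cdot,t_*)\ge\delta f$ on the fixed compact set $[-\xi_1,b]$ for a small $\delta>0$. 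Since $\delta w$ is again a subsolution, comparison yields $u(x,t+t_*)\ge\delta e^{\alpha t}f(x)$, and using $\min_K f>0$ together with $\alpha\ge1-\varepsilon$ I obtain $u(x,t)\ge C_\varepsilon e^{(1-\varepsilon)t}$ on $K$, uniformly. The main obstacle is precisely this interplay between growth exponent and support size: the subsolution does not spread ($\beta=0$), so covering a large compact set forces $\alpha$ near $1$ — harmless since only the rate $(1-\varepsilon)$ is claimed — and the only delicate points are the elementary sign checks that guarantee the subsolution property at the two interfaces and at the origin where $a$ jumps.
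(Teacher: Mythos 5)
Your proof is correct and follows essentially the same route as the paper: the upper bound by comparison with $\|u_0\|_\infty e^{t}$, and the lower bound by a separated-variables subsolution $e^{\alpha t}f(x)$ with trigonometric/hyperbolic profile matched $C^1$ at the origin and truncated at its first zeros. The only (immaterial) difference is in covering an arbitrary compact set: the paper keeps $\alpha$ fixed and translates the subsolution to the right, whereas you enlarge its support by taking $\alpha\to 1^-$ and $\lambda\to\sqrt{\alpha}^{+}$ --- both work since only the rate $1-\varepsilon$ is claimed.
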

\begin{proof}
The upper estimate is given by comparison with the function in \eqref{flat}. For the lower bound we use again comparison, this time with  an exponential selfsimilar  function, see \eqref{psi}, \eqref{phi}. Since here $\beta=0$, we look for a function in separated variables
$$
w(x,t)=e^{\alpha t }f(x), \qquad 0<\alpha<1,
$$
where the profile $f$ satisfies
$$
\left\{
\begin{array}{ll}
f''+(1-\alpha) f=0, \qquad &x>0,\\
f''-\alpha f=0,& x<0, \\
f(0)=1.
\end{array}\right.
$$
This gives
$$
f(x)=\left\{
\begin{array}{ll}
C_1 e^{\sqrt{\alpha} x}+C_2e^{-\sqrt{\alpha}x} , &x<0, \\
C_3\sin(\sqrt{1-\alpha}\ x)+\cos(\sqrt{1-\alpha} \ x),\quad &x>0.
\end{array}\right.
$$
The matching condition at $x=0$ means
$$
C_1+C_2=1, \qquad C_3=\sqrt{\frac\alpha{1-\alpha}} (C_1-C_2).
$$
Notice that for any $x_-<0$ given we can take
$$
C_2=\frac{e^{\sqrt\alpha x_-}}{e^{\sqrt\alpha x_-}-e^{-\sqrt\alpha x_-}},
$$
so that $f(x_-)=0$. Moreover, for
$$
x_+=\frac1{\sqrt{1-\alpha}}\arctan\left(\frac{-1}{C_3}\right)\in \left(\frac{\pi}{2\sqrt{1-\alpha}},
\frac{\pi}{\sqrt{1-\alpha}}\right),
$$
we have $f(x)>0$ in $(0,x_+)$ and $f(x_+)=0$. This profile gives us a subsolution by the procedure of truncation by zero. We denote by $f_{x_-}$ this truncated profile.

Let now $u$ be a solution of \eqref{1.1}. Since the heat equation has infinite speed of propagation, we can assume without loss of generality that $u_0(x)>0$. Then there exists $t_1>0$ such that
$$
u_0(x)\ge e^{-\alpha t_1} f_{x_-}(x).
$$
By comparison we deduce $u(x,t)\ge w_{x_-}(x,t-t_1)$. We obtain the lower grow-up rate for compact subsets of $(-\infty,\frac{\pi}{2\sqrt{1-\alpha}})$ and for every $\alpha<1$.

Finally we observe that for $A>0$ the function $w_{x_-}(x-A,t-t_1)$ is also a subsolution to \eqref{1.1}, so we obtain the lower grow-up rate for any compact subset of $\mathbb R$.
\end{proof}

\

We end by considering  the case $m<1=p$.

\subsubsection{\underline{Fast diffusion $m<1$}.}
Here the rate is different for $x>0$ and for $x<0$, as in the case of a localized reaction, $a(x)=\mathds{1}_{(-L,L)}$.

We first show that the grow-up rate given in \eqref{decay-a=1} is sharp for compact subset of $\mathbb R^+$ by proving the lower bound.  To do that we compare with a subsolution in separated variables with compact support in $\mathbb R^+$,
$$
\underline u(x,t)=f(x) g(t).
$$
Notice that since $u$ has global grow-up, see Lemma \ref{lem-global-growup}, we have $u(x,t_0)\ge \underline u(x,0)$ for $t_0$ large enough, so then  the comparison of the initial data is granted by a time shift.

\begin{lema}\label{lema-exp-lower}
Let $u$ be a solution of \eqref{1.1} with $m<1=p$. Then,
$$
u(x,t)\ge c e^t
$$
uniformly in compact subset of $\mathbb R^+$.
\end{lema}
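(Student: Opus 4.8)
The plan is to build a weak subsolution with separated variables and compact support strictly inside $\mathbb{R}^+$, where $a\equiv1$ and the equation reads $u_t=(u^m)_{xx}+u$. Writing $\underline u(x,t)=f(x)g(t)$ and inserting it into this equation, the variables separate exactly if
$$
(f^m)''=-\mu f \quad\text{on } (x_-,x_+),\qquad g'=g-\mu g^m,
$$
for a common constant $\mu>0$. Thus I would first fix $\mu>0$ and produce a compactly supported profile $f>0$ on an interval $(x_-,x_+)\subset(0,\infty)$ solving the first ODE, and then solve the scalar ODE for $g$ and read off its rate.

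For the profile, set $h=f^m$, so that $h''=-\mu h^{1/m}$ with $1/m>1$. Multiplying by $h'$ gives the conserved energy $\tfrac12(h')^2+\mu\frac{m}{m+1}h^{(m+1)/m}=E$. The orbit issuing from $h=0$ with $h'>0$ rises to a maximum where $h'=0$ and returns to $h=0$ with the opposite slope, reaching the axis in a finite $x$-interval because the energy integral converges near $h=0$. This yields a bump $h$ vanishing at both endpoints with $(f^m)'(x_-)>0>(f^m)'(x_+)$; equivalently $f\sim(x-x_-)^{1/m}$ near the left endpoint and similarly on the right. Truncating $f$ by zero outside $[x_-,x_+]$ then produces a \emph{weak subsolution}: on the interior $\underline u$ solves the equation, while the jumps in $(f^m)'$ at the two interfaces have the correct sign, so the distributional $(\underline u^m)_{xx}$ acquires two nonnegative Dirac masses, which only helps the subsolution inequality. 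A scaling $x\mapsto\sigma x$, $f\mapsto Af$ changes $\mu$ into $A^{m-1}\sigma^2\mu$ and lets me place a compactly supported profile around any prescribed point of $(0,\infty)$.

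For the time factor I analyze $g'=g(1-\mu g^{m-1})$. Since $m<1$, the term $\mu g^{m-1}$ is small for large $g$, so any solution starting above the unstable equilibrium $g_*=\mu^{1/(1-m)}$ is increasing and unbounded. Writing $\frac{d}{dt}(e^{-t}g)=-\mu e^{-t}g^m$ and using $g(s)\le g(0)e^s$ shows $e^{-t}g(t)$ decreases to a limit $L=g(0)-\mu\int_0^\infty e^{-s}g^m(s)\,ds$, and the integral is finite because $e^{-s}g^m(s)\le g(0)^m e^{(m-1)s}$ is integrable. Choosing $g(0)$ large makes $L>0$, whence $g(t)\ge L e^t$ for all $t$. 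This is exactly where $m<1$ is used: the sublinear correction $-\mu g^m$ cannot destroy the exponential rate, so the natural $e^t$ lower bound survives.

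It remains to compare and to globalize. Fixing such $\underline u=fg$ with support in $(0,\infty)$, Lemma~\ref{lem-global-growup} gives $u(\cdot,t_0)\ge f\,g(0)=\underline u(\cdot,0)$ on the compact support for $t_0$ large; since $p=1$ the reaction is Lipschitz and the comparison principle applies, so $u(x,t+t_0)\ge f(x)g(t)\ge c\,e^t$ on the interior of the support. Translating the profile and covering a given compact set $K\subset\mathbb{R}^+$ by finitely many supports on whose middle portion $f\ge\delta>0$, and taking the minimum of the finitely many constants, yields $u(x,t)\ge c\,e^t$ uniformly on $K$. The main obstacle is the profile construction together with the ODE analysis of $g$: showing the compactly supported bump exists with the right (subsolution-compatible) interface slopes, and confirming that the $e^t$ rate is preserved under the $-\mu g^m$ correction.
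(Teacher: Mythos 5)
Your proof is correct and follows essentially the same route as the paper: a separated-variables subsolution $f(x)g(t)$ with compactly supported profile in $\mathbb{R}^+$ solving $(f^m)''=-\mu f$, the time factor solving $g'=g-\mu g^m$, and comparison after a time shift justified by global grow-up. Your treatment is in fact slightly more careful than the paper's at two points — the energy argument for the bump profile and the explicit monotone-limit argument showing $e^{-t}g(t)\to L>0$ where the paper only notes $g'\sim g$ — but these are refinements, not a different method.
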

\begin{proof}
Let $\phi$ be the solution to the problem
$$
\left\{
\begin{array}{l}
(\phi^m)''+ \phi=0,\quad \xi>0,\\
\phi(0)=0, \\
\phi'(0)=1.
\end{array}\right.
$$
Since $\phi^m$ is concave $\phi$ must vanish at some point $x_0<\infty$. Now we consider the rescaled function
$$
f(x)=A \phi(A^{\frac{1-m}2}x),
$$
which satisfies the same equation and vanishes at $x=x_0A^{-\frac{1-m}2}$. This is the spatial part of our subsolution. The time part $g$ is defined as the solution to
$$
\left\{
\begin{array}{l}
g'=g-g^m,\quad \xi>0,\\
g(0)>1.
\end{array}\right.
$$
We have $u(x,t+t_0)\ge f(x)g(t)$ for any $x>0$ and $t>0$. Since  $g'\sim g$ as $t\to\infty$, the comparison  gives the desired lower bound.
\end{proof}

In order to obtain the grow-up rate for $\mathbb R^-$, we note that by \eqref{decay-a=1} $u$ is a subsolution of the problem
$$
\left\{
\begin{array}{ll}
w_t=(w^m)_{xx},\qquad &x<0, t>0,\\
w(0,t)= C_1 e^t, & t>0,\\
w(x,0)=w_0(x), & x<0.
\end{array}\right.
$$
It is proved in \cite{FdP} that there exists a unique self-similar solution of exponencial type
$$
W(x,t)=e^t f(x e^{\frac{1-m}{2}t}),
$$
which is increasing in both variables $x$ and $t$. Moreover, for $|\xi|$ large
$$
f(\xi)\sim |\xi|^{\frac{-2}{1-m}}\left( \log|\xi|\right)^{\frac{1}{1-m}}.
$$
Then, if the initial datum satisfies
\begin{equation}\label{eq-condicion}
u_0(x)\sim |x|^{\frac{-2}{1-m}}\left( \log|x|\right)^{\frac{1}{1-m}},\qquad x\sim -\infty,
\end{equation}
we can take as a supersolution $\overline w (x,t)= A W(x,t)$. Notice that from the property $W_t\ge0$ we have
$$
\overline w_t -\overline w_{xx}=(A-A^m) w_t \ge 0,
$$
provided $A>1$. Moreover, taking $A$ large enough we get $\overline w(x,0)\ge u_0(x)$.

On the other hand, by Lemma \ref{lema-exp-lower} we have that $u$ is a supersolution to the problem
$$
\left\{
\begin{array}{ll}
w_t=(w^m)_{xx},\qquad &x<1, t>0,\\
w(1,t)= C_2 e^t, & t>0,\\
w(x,0)=w_0(x), & x<1.
\end{array}\right.
$$
and $\underline w (x,t)= A W(x,t)$ with $A$ small enough to have $\underline w(x,0)\le u_0(x)$ is a subsolution.

As a conclusion,
\begin{lema}
Let $u$ be a solution  of \eqref{1.1}  with $m<1=p$, such that the initial datum $u_0$ satisfies the condition \eqref{eq-condicion}. Then, for $x<0$
$$
u(x,t)\sim t^{\frac1{1-m}}.
$$
\end{lema}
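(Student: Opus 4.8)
The plan is to read off the polynomial rate for $x<0$ from the two comparisons already arranged above, by inserting the known far-field behaviour of the self-similar profile $f$ into $W(x,t)=e^{t}f(xe^{(1-m)t/2})$. Everything reduces to a single large-time asymptotic computation for $W$ once $u$ has been trapped between two (scaled, possibly translated) copies of $W$.

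First I would record the sandwich. On the half-line $\{x<0\}$ the function $\overline w=AW$ with $A>1$ large dominates $u$: it is a supersolution of the pure-diffusion problem there, since $\overline w_t-(\overline w^m)_{xx}=(A-A^m)W_t\ge0$ because $W_t\ge0$ and $A-A^m>0$ for $A>1$, $m<1$; its boundary trace $AW(0,t)$ exceeds the trace $C_1e^t\ge u(0,t)$ furnished by \eqref{decay-a=1}; and $\overline w(x,0)\ge u_0(x)$ thanks to the matching of the tails guaranteed by hypothesis \eqref{eq-condicion} together with the stated profile asymptotics. The comparison principle then gives $u\le AW$ for $x<0$. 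Symmetrically, a scaled and translated copy $\underline w(x,t)=aW(x-1,t)$ with $a$ small is a subsolution of the pure-diffusion problem on $\{x<1\}$, whose boundary trace at $x=1$ lies below $C_2e^t\le u(1,t)$, the latter supplied by Lemma \ref{lema-exp-lower}; hence $u\ge\underline w$ for $x<0$.

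The heart of the matter is then the asymptotics of $W$ at a fixed $x<0$. Since $\xi:=xe^{(1-m)t/2}\to-\infty$ as $t\to\infty$, I would substitute $f(\xi)\sim|\xi|^{-2/(1-m)}(\log|\xi|)^{1/(1-m)}$ into $W(x,t)=e^{t}f(\xi)$. The factor $|\xi|^{-2/(1-m)}=|x|^{-2/(1-m)}e^{-t}$ cancels exactly the leading $e^{t}$, while $\log|\xi|=\log|x|+\tfrac{1-m}{2}t\sim\tfrac{1-m}{2}t$, so that
\[
W(x,t)\sim |x|^{-2/(1-m)}\Big(\tfrac{1-m}{2}\Big)^{1/(1-m)}t^{1/(1-m)}.
\]
On a compact subset $[-b,-a]\subset\mathbb R^-$ the prefactor $|x|^{-2/(1-m)}$ stays between two positive constants and the correction $\log|x|$ is uniformly negligible, so the equivalence is uniform. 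Feeding this into the sandwich, and noting that the two enclosing copies of $W$ differ only in positive multiplicative constants, yields $u(x,t)\sim t^{1/(1-m)}$ uniformly on compact subsets of $\mathbb R^-$, as claimed.

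I expect the main obstacle to be justifying the two comparisons on the unbounded domains $\{x<0\}$ and $\{x<1\}$: the comparison principle for the fast-diffusion equation on a half-line is sensitive to the behaviour at $-\infty$, which is precisely why the hypothesis \eqref{eq-condicion} is imposed, so that $u_0$ matches the self-similar tail and the ordering of initial data survives at spatial infinity. The computation itself is routine once one notices the exact cancellation of $e^{t}$ against $|\xi|^{-2/(1-m)}$, which is exactly the mechanism turning the exponential boundary forcing at the edge of the reaction region into mere polynomial growth to its left.
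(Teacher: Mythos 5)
Your proposal is correct and follows essentially the same route as the paper: trap $u$ between $aW$ and $AW$ on the relevant half-lines using that $(A-A^m)W_t\ge 0$ determines the sub/supersolution sign, with \eqref{eq-condicion} ensuring the ordering of the data at $-\infty$, and then read off the rate from $f(\xi)\sim|\xi|^{-2/(1-m)}(\log|\xi|)^{1/(1-m)}$. Your explicit cancellation $e^{t}|\xi|^{-2/(1-m)}=|x|^{-2/(1-m)}$ and $\log|\xi|\sim\tfrac{1-m}{2}t$ is exactly the computation the paper leaves implicit.
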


\subsection{\underline{Case $p<1$}}

\

Here we distinguish between $m<p$ and $m\ge p$.

\subsubsection{\underline{Case $m<p$}.}

\begin{lema}
Let $m< p< 1$. If $u_0$ satisfies \eqref{eq.comportamiento.dato} for $x\sim -\infty$, then
$$
u(x,t)\sim \left\{
\begin{array}{ll}
t^{\frac1{1-p}}, \qquad &x>0, \\
t^{\frac1{1-m}},\qquad &x<0,
\end{array}\right.
$$
uniformly in compact sets.
\end{lema}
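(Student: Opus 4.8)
The plan is to split the analysis into the reaction region $x>0$, where I expect the global-reaction rate $t^{1/(1-p)}$, and the diffusion region $x<0$, where the smaller rate $t^{1/(1-m)}$ should appear precisely because $m<p$. The two regions communicate only through the boundary value at $x=0$, which the first step pins down.

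For $x>0$ the upper bound is immediate by comparison with the flat supersolution \eqref{flat}, giving $u(x,t)\le ct^{1/(1-p)}$ on all of $\mathbb R$. For the lower bound I would build a separated-variables subsolution supported inside $(0,\infty)$, in the spirit of Lemma \ref{bump<m} and of the $p=1$ construction: take $\underline u(x,t)=g(t)f(x)$, where $f\ge0$ is the truncation by zero of a bump solving $(f^m)''+f^p=0$ (so $f^m$ is concave and $f$ has compact support), normalized so that $\max f\le1$, and $g$ solves $g'=g^p-g^m$ with $g(0)\ge1$. A direct computation reduces the subsolution inequality to $(g^p-g^m)(f-f^p)\le0$, which holds because $g\ge1$ and $p>m$ force $g^p\ge g^m$, while $f\le1$ and $p<1$ force $f^p\ge f$. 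Since $g'\sim g^p$ for $g$ large, $g(t)\sim t^{1/(1-p)}$. Using the grow-up of Lemma \ref{lem-global-growup} to place $\underline u(\cdot,0)$ below $u(\cdot,t_1)$ after a time shift, together with the scaling $f\mapsto Af(A^{(p-m)/2}x)$ (which preserves the profile equation and widens the support as $A\to0$) and $x$-translations to cover any compact subset of $\mathbb R^+$, comparison yields $u(x,t)\ge ct^{1/(1-p)}$ there. Hence $u\sim t^{1/(1-p)}$ uniformly on compact subsets of $\mathbb R^+$; in particular $u(0,t)$ and $u(\varepsilon,t)$ are trapped between multiples of $t^{1/(1-p)}$ for small $\varepsilon>0$.

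For $x<0$ the coefficient vanishes, so $u$ solves the pure fast-diffusion equation $u_t=(u^m)_{xx}$ on the half-line, fed from $x=0$ by a boundary value of order $t^{1/(1-p)}$. The key object is the self-similar solution of power type
$$
W(x,t)=t^{\frac1{1-p}}F\big(xt^{\frac{p-m}{2(1-p)}}\big),\qquad x<0,
$$
whose profile solves $(F^m)''-\tfrac{p-m}{2(1-p)}\eta F'-\tfrac1{1-p}F=0$ with $F(0)>0$ and $F(\eta)\sim|\eta|^{-2/(1-m)}$ as $\eta\to-\infty$. A short computation shows this forces $W(0,t)\sim t^{1/(1-p)}$, $W(x,t)\sim|x|^{-2/(1-m)}t^{1/(1-m)}$ at fixed $x<0$, and the matching far-field decay $W(x,t_0)\sim|x|^{-2/(1-m)}$ of \eqref{eq.comportamiento.dato}; this is exactly the object built in \cite{FdP} for the exterior region of the localized problem, so I would quote from there its existence, uniqueness and monotonicity $W_t\ge0$. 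Because $m<1$, the monotonicity makes $AW$ a supersolution of $u_t=(u^m)_{xx}$ for $A\ge1$ (the defect being $(A-A^m)W_t\ge0$) and a subsolution for $A\le1$. Choosing $A$ large so that $AW$ dominates both $u(0,t)$ and $u_0$ gives the upper bound $u(x,t)\le AW(x,t)\sim t^{1/(1-m)}$; working on $\{x<\varepsilon\}$, where $u$ is a supersolution of pure fast diffusion with $u(\varepsilon,t)\ge ct^{1/(1-p)}$, and choosing $A$ small gives $u(x,t)\ge AW(x,t)\sim t^{1/(1-m)}$. On compact subsets of $\mathbb R^-$ the factor $|x|^{-2/(1-m)}$ is bounded above and below, so both estimates read $u\sim t^{1/(1-m)}$.

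The main obstacle is the $x<0$ analysis, and specifically the self-similar solution $W$: its existence together with the sign $W_t\ge0$ is what makes the two comparisons work, and I would rely on \cite{FdP} for it rather than reprove the phase-plane study. A secondary technical point is making rigorous the boundary control of $u$ at $x=0$ (resp. $x=\varepsilon$): the lower bound $u(\varepsilon,t)\ge ct^{1/(1-p)}$ must be imported from the $\mathbb R^+$ step, and one must check that the comparison principle for the fast-diffusion Dirichlet problem on a half-line applies, which is harmless here because the relevant solutions stay strictly positive (recall $u_0>0$ on $x>0$ is assumed in this range).
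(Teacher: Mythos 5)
Your proposal is correct and follows essentially the same route as the paper, which proves this lemma by repeating the $p=1$, $m<1$ argument (flat supersolution and a separated--variables compactly supported subsolution on $\mathbb R^+$, then the super/subsolutions $AW$ built from the self-similar profile of \cite{FdP} with $W_t\ge0$ on $\mathbb R^-$). Your only addition is to make explicit the adaptation of the subsolution to $p<1$, namely the profile equation $(f^m)''+f^p=0$ and the time factor $g'=g^p-g^m$ with the sign check $(g^p-g^m)(f-f^p)\le0$, which is exactly what the paper leaves implicit.
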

\begin{proof}
The proof follows in the same way as in the case $p=1$, using here the selfsimilar profile
$$
W(x,t)=t^{\frac{1}{1-p}} f(x t^{\frac{p-m}{2(1-p)}})
$$
constructed in \cite{FdP}, which is again increasing in both variables $x$ and $t$,  and that satisfies, for $|\xi|$ large,
$$
f(\xi)\sim |\xi|^{\frac{-2}{1-m}}.
$$
\end{proof}

\subsubsection{\underline{Case $m\ge p$}.}
\begin{lema}
Let $p<1\le m$. then
$$
u(x,t)\ge c t^{\frac1{1-p}}.
$$
uniformly in compact sets of $\mathbb R$.
\end{lema}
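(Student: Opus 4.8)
The upper bound $u(x,t)\le c\,t^{1/(1-p)}$ is immediate from comparison with the flat supersolution \eqref{flat}, so the content of the lemma is the matching lower bound. Since here $m\ge1>p$ we are genuinely in the regime $m>p$, and the plan is to produce a self-similar subsolution of \eqref{1.1} that grows at the reaction rate $t^{1/(1-p)}$ uniformly on compact sets, and then to compare. The essential difficulty, and the reason the bounded-interval construction of \cite{FdP} only delivers the slower rate $t^{1/(m+1-2p)}$, is that for a profile of fixed width the diffusion term scales like $g^m$ while the reaction scales like $g^p$; as $m>p$ the diffusion dominates for large time and ruins any fixed-support separated subsolution. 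Exploiting that here the reaction region is the whole half-line, I would instead let the support expand in time, so that the curvature of the profile, and hence the diffusion contribution, decays.

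Concretely, I would seek a subsolution of the form
$$
\underline u(x,t)=\tau^{\mu}\,F\!\left(x\,\tau^{-b}\right),\qquad \tau=t+t_0,\quad \mu=\frac1{1-p},\quad b=\frac{m-p}{2(1-p)}>0 .
$$
Inserting this into \eqref{1.1} and using $p\mu=\mu-1$, one checks that all three terms carry the same power $\tau^{\mu-1}$ precisely for the value of $b$ above, and the subsolution condition reduces to the profile inequality
$$
(F^m)''+b\,\zeta F'-\mu F+a(\zeta)F^{p}\ge0 \qquad\text{on }\{F>0\},
$$
with $a(\zeta)=\mathds{1}_{(0,\infty)}(\zeta)$. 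Because $b>0$, for every fixed $x$ the variable $\zeta=x\,\tau^{-b}\to0$ as $t\to\infty$; hence if $F$ is compactly supported with $F(0)>0$ then $\underline u(x,t)\sim\tau^{\mu}F(0)\sim c\,t^{1/(1-p)}$ uniformly on compact sets of $\mathbb{R}$, which is exactly the desired estimate.

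The heart of the proof is thus the construction of an admissible profile $F\ge0$, supported on some $[\zeta_-,\zeta_+]$ with $\zeta_-<0<\zeta_+$, positive in the interior and with $F(0)>0$. On $\zeta>0$ the reaction term is present and on $\zeta<0$ it is absent, so $F$ is obtained by matching at $\zeta=0$ solutions of the two second-order profile equations, exactly as in the $p=1$ analysis of Section~\ref{sect-guprates} and as in \cite{FdP}; a shooting/phase-plane argument in variables analogous to \eqref{variable} produces solutions that vanish at finite interfaces $\zeta_\pm$. Truncating by zero is admissible for a subsolution provided $(F^m)'(\zeta_+^-)\le0$ and $(F^m)'(\zeta_-^+)\ge0$, since then the flux $(F^m)'$ jumps upward across each free boundary, giving a nonnegative distributional contribution to $(F^m)''$; the same bookkeeping shows it suffices to match at $\zeta=0$ with $(F^m)'(0^+)\ge(F^m)'(0^-)$. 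Once $F$ is in hand, the comparison is routine: by the grow-up Lemma~\ref{lem-global-growup} the solution becomes arbitrarily large on compact sets, so $u(x,t_*)\ge\underline u(x,0)$ for some $t_*$; comparison (legitimate since $u_0>0$ for $x>0$, using the maximal/minimal solution framework of \cite{dPV91}) then yields $u(x,t+t_*)\ge\underline u(x,t)$, and the lower bound follows.

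The main obstacle is precisely this profile construction: one must show that the matched trajectory vanishes at finite $\zeta_\pm$ with the correct outward-decreasing flux sign on both sides while keeping $F(0)>0$, and one must control the degeneracy of $(u^m)_{xx}$ at the free boundaries when $m>1$. The switching off of the reaction for $\zeta<0$ is what distinguishes this from the symmetric bounded-interval case and forces the two-sided matching at $\zeta=0$; once the profile exists, everything else is standard comparison and the full-line estimate (including $x<0$) is obtained simultaneously because $\zeta\to0$ there as well.
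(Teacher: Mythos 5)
Your framework is exactly the paper's: the same self-similar ansatz $t^{\alpha}f(xt^{\beta})$ with $\alpha=1/(1-p)$ and $\beta=-\frac{m-p}{2}\alpha$ (your $b$ equals $-\beta$), the same reduction to the profile inequality $(f^m)''-\beta\xi f'-\alpha f+a(\xi)f^p\ge0$, the same observation that $\xi\to0$ on compact sets so that a compactly supported profile with $f(0)>0$ yields the rate, and a comparison to finish. The gap is that the one nontrivial step --- the existence of such an admissible profile --- is precisely the part you defer to ``a shooting/phase-plane argument analogous to \eqref{variable}'', and that argument is not carried out. It is not a routine transfer of the $p=1$ analysis: there the reaction was linear, so each half-line equation reduced cleanly to the two-dimensional system \eqref{variable}, whereas here the equation carries both $f^p$ and $f$ terms with $p<1$ (non-Lipschitz), and one would need a two-parameter matching (value and slope at $\xi=0$) together with control of the flux signs at both free boundaries. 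As written, the heart of the lemma is asserted rather than proved.

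The paper avoids shooting entirely by exploiting that a subsolution need only satisfy a differential \emph{inequality}: the profile is glued from four explicit pieces. On $[\xi_0,0]$ and $[0,\xi_1]$ it takes $f^m$ to be explicit quadratic polynomials in $\xi$, chosen so that $f^m$ and $(f^m)'$ vanish at the left interface $\xi_0$, match in a $C^1$ way at $\xi=0$, and satisfy the profile inequality by elementary monotonicity arguments (using $\beta\le0$ and taking the amplitude $A$ small); for $\xi>\xi_1$ it uses the truncation of the exact solution of the profile ODE with $g'(\xi_1)=0$, which is nonincreasing and positive. Because $\beta<0$, this subsolution has zero initial trace (its support shrinks and its amplitude vanishes as $t\to0^+$), so comparison starts at $t=0$ and no appeal to Lemma~\ref{lem-global-growup} or a time shift is required, although your version of that final step would also work. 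To complete your proof you must either execute the shooting argument in full or, more simply, replace it by an explicit piecewise construction of this kind.
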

\begin{proof}
We consider a subsolution in selfsimilar form
$$
w(x,t)=t^{\alpha} f(\xi),\qquad \xi=xt^\beta,
$$
where
$$
\alpha=\frac1{1-p},\qquad \beta = -\frac{m-p}{2}\alpha,
$$
and the selfsimilar profile satisfies
$$
\mathfrak{L}(f):=(f^m)''-\beta\xi f'+a(\xi)f^p-\alpha f\ge 0.
$$
We construct the profile gluing four functions. Let $A>0$ be a constant to be fixed and put $\xi_0=-\sqrt{2/\alpha} A^{\frac{m-1}{2m}}$.
\begin{enumerate}
\item For $\xi\le\xi_0$ we put $f_1(\xi)=0$.
\item For $\xi_0\le\xi\le0$ we define
$$
f_2^m(\xi)=
A+ \sqrt{2\alpha} A^{\frac{1+m}{2m}}\xi+\alpha A^{1/m}\frac{\xi^2}{2}.
$$
Notice that $f^m_2(\xi_0)=(f_2^m)'(\xi_0)=0$. Moreover since $\beta\le0$ and $f_2$ is non-decreasing
$$
\mathfrak{L}(f_2)\ge (f_2^m)''-\alpha f_2\ge \alpha A^{\frac1m}-\alpha f_2(0)=0.
$$
\item For $0\le \xi\le \xi_1=\sqrt{2\alpha} A^{\frac{1+m-2p}{2m}}$  we define
$$
f_3^m(\xi)=A+ \sqrt{2\alpha} A^{\frac{1+m}{2m}}\xi - (A^{p/m}-\alpha A^{1/m}) \frac{\xi^2}{2}.
$$
We have  $(f_3^m)(0)=(f_2^m)(0)=A$, $(f_3^m)'(0)=(f_2^m)'(0)$, so this function $f_3$ matches well with $f_2$. Also $f_3$ is increasing in $0<\xi<\xi_1$, with
$$
f_3(\xi_1)=
A^{\frac1m} \left(1+\alpha \frac{A^{\frac{1-p}{m}}}{1-\alpha A^{\frac{1-p}{m}}}\right)^{\frac1m}, \qquad (f_3)'(\xi_1)=0.
$$
Since $p<1$ we get that for $A$ small enough, both $\xi_1$ and  $f_3(\xi_1)$ are small. Hence, the function $f_3^p(\xi)-\alpha f_3(\xi)$ is increasing. Then
$$
\mathfrak{L}(f_3)\ge (f_3^m)''+f_3^p-\alpha f_3\ge
(f_3^m)''+f_3^p(0)-\alpha f_3(0)= 0.
$$
\item For $\xi>\xi_1$ we consider $f_4=g_+$, where $g$ is  the solution to the initial value problem
$$
\left\{
\begin{array}{ll}
(g^m)''-\beta \xi g'+g^p-\alpha g=0,\qquad & \xi>\xi_1,\\
g(\xi_1)=f_3(\xi_1),\\
g'(\xi_1)=0.
\end{array}\right.
$$
It is clear that if $f_3(\xi_1)<(1/\alpha)^\alpha$ then $g$ is nonincresing and positive for $\xi_1\le\xi<\xi_2\le\infty$.
\end{enumerate}
The final function putting together $f_i$, $i=1,\cdots,4$ is a subsolution to our problem with zero initial value. This gives the desired lower bound of the grow-up rate.
\end{proof}

\section{Blow-up rates}\label{sect-buprates}

\begin{proof}[Proof of Theorem \ref{teo-buprates}] The lower blow-up rate is obtained easily by (strict) comparison with the supersolution
$$
U(t)=C_p (T-t)^{-\frac1{p-1}}.
$$
Indeed, if we assume that there exists $t_0\in(0,T)$ such that
$$
\|u(\cdot,t_0)\|_\infty < U(t_0),
$$
it also holds
$$
\|u(\cdot,t_0)\|_\infty < U(t_0-\varepsilon)
$$
for some $\varepsilon>0$, which is a contradiction with the fact that $u$ blows up at time $T$.

In order to prove the upper blow-up rate we use a rescaling technique inspired in the work \cite{FS}.

Let us define
\begin{equation}\label{M}
M(t)=\max_{\mathbb R\times [0,t]} u(x,\tau),
\end{equation}
and consider, for any fixed $t_0\in(0,T)$, the increasing sequence of times
$$
t_{j+1}=\sup\{t\in(t_j,T)\,:\, M(t)=2M(t_j)\}.
$$
Observe that for this sequence we have $\|u(\cdot,t_j)\|_\infty= M(t_j)$. We also observe that since the reaction only takes place for $x>0$, we get that near the blow-up time the maximum of $u$ is achieved in $\mathbb R^+$. Therefore we can take $x_j\ge 0$  such that
$$
u(x_j,t_j)= M(t_j).
$$
We consider the sequence
$$
z_j=(t_{j+1}-t_j){M^{p-1}(t_j)}.
$$
Let us observe that if $z_j$ is bounded, we get that
$$
t_{j+1}-t_j\le cM^{1-p}(t_j)=c2^{j(1-p)}M^{1-p}(t_0).
$$
Performing the sum,
$$
T-t_0\le c M^{1-p}(t_0)\sum_{j=0}^\infty2^{j(1-p)}=c'M^{1-p}(t_0)\le c'\|u(\cdot,t_0)\|_\infty^{1-p},
$$
that is, the desired upper blow-up rate. Therefore, in order to arrive at a contradiction, we assume that there exists a subsequence of times, still denoted  $t_j$, such that
\begin{equation}\label{eq.times}
\lim_{j\to\infty}z_j=\infty.
\end{equation}
Now we define the functions
$$
\varphi_j(y,s)=\frac1{M_j} u(M_j^{\frac{m-p}2 }y+ x_j,M_j^{1-p}s+ t_j),
$$
for
$$
y\in \mathbb R, \qquad s\in I_j= (- t_j M_j^{p-1}, (T- t_j)M_j^{p-1}),
$$
where $M_j=M(t_j)$. Notice that $I_j\to \mathbb R$ as  $j\to\infty$ and  $\varphi_j$ is a solution to the equation
$$
(\varphi_j)_s=(\varphi_j^m)_{xx}+a(y-x_j M_j^{\frac{p-m}2}) \varphi_j^p,\qquad  (y,s)\in \mathbb{R}\times I_j.
$$
It also satisfies
$$
\varphi_j(0,0)=1 \quad\text{and}\quad
\varphi_j(y,s)\le 2 \quad \mbox{in } \mathbb R\times I_j.
$$
The uniform bounds for $\varphi_j$ impliy that $\varphi_j$ is H\"older continuous with uniform coeffcient. Since $\varphi_j(0,0)=1$ we have a uniform nontrivial lower bound for every $\varphi_j$, that is,
\begin{equation}\label{lower}
\varphi_j(y,0)\ge g(y)\ge0,
\end{equation}
for some nontrivial function $g$.

We claim that, under the assumption of Theorem \ref{teo-buprates}, each function $\varphi_j$ blows up at a finite time $S_j$ which is uniformly bounded, that is $S_j<S$. This is a contradiction with the fact that $\varphi_j(y,s)\le 2$ for $s\in (0,z_j)$. Indeed, since $z_j\to\infty$ we can take $j$ large such that $z_j> S$. Therefore \eqref{eq.times} can not be true and the blow-up rate follows.

In order to prove the claim we first observe that since $x_j\ge0$, we have that  $\varphi_j$ is a supersolution of the equation
\begin{equation}\label{eq-h}
h_s=(h^m)_{yy}+ a(y) h^p, \qquad  (y,s)\in \mathbb R\times (0,z_j).
\end{equation}
Therefore, for $p\le m+2$, we can apply Theorem \ref{teo-exponents} to get that the solution of the above equation with initial datum $h(x,0)=g(y)$ blows up at some time $S$. Then, by comparison, $\varphi_j$ blows up at time $S_j<S$.

For the case $p>m+2$ we need the extra hypothesis $\partial_t u\ge0$, which implies $(\varphi_j)_s\ge0$. Therefore $\varphi_j$ is a supersolution of the problem
\begin{equation}\label{polub}
\left\{
\begin{array}{ll}
w_s=(w^m)_{yy}, \qquad & (y,s)\in \mathbb R^+\times (0,z_j),\\
w(0,s)=1, \\
w(y,0)=0.
\end{array}\right.
\end{equation}
Since $w\le 1$ we can pass to the limit, by means of a Lyapunov functional, to get that $w(x,s)\to 1$ as $s\to\infty$ uniformly in compact sets of $\mathbb R^+$. Actually in the linear case $m=1$ the solution to problem~\eqref{polub} is explicit, while if $m\neq1$ the solution is the so-called Polubarinova-Kochina solution.   Notice that this behaviour is also true if we consider the  problem in $\mathbb R^-$. Therefore, by comparison $\varphi_j(y,s)\ge 1/2$ in $|y|<K$ and $s>s_K$, and then
$$
\varphi_j(y,s_K)\ge h_0(y)=\frac12(1-x^2/K^2)_+^{1/m}.
$$
Observe that the energy of $h_0$ given in  \eqref{energy} satisfies
$$
E_{h_0}=\dfrac12\int_{-\infty}^\infty|(h_0^m)_x|^2-\frac m{p+m}\int_0^\infty h_0^{p+m}= C_1K^{-1}-
C_2 K <0
$$
for $K$ large. Then, applying the concavity argument the solution of \eqref{eq-h} with initial datum $h_0$
blows up at finite time $S$ and by comparison $\varphi_j$ also blows up at a time $S_j<S$.
\end{proof}


\section{Blow-up sets}\label{sect-bupsets}

We prove here Theorem \ref{teo-bupsets}. We first consider the case $p\ge m$.

\begin{lema}
Let $u$ be a blow-up solution to \eqref{1.1} with compactly supported initial datum. Then $B(u)$ is bounded from the left. In fact,

i) $B(u)\subset \mathbb R^+$ if $p>m$;

ii) $B(u)\subset [-K,\infty)$ if $p=m$.
\end{lema}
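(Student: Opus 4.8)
The plan is to exploit that for $x<0$ equation \eqref{1.1} reduces to the pure porous medium equation $u_t=(u^m)_{xx}$, so that $u$ on the left half-line is governed by its trace at $x=0$, which is controlled by the blow-up rate. Since the reaction is supported in $\mathbb R^+$, the maximum of $u$ near $T$ is attained at some $x\ge0$, so Theorem~\ref{teo-buprates} gives
\begin{equation*}
u(0,t)\le\|u(\cdot,t)\|_\infty\le C(T-t)^{-\frac1{p-1}},\qquad t<T.
\end{equation*}
First I would therefore reduce the statement to constructing, on the strip $\{x<0,\ 0\le t<T\}$, a supersolution $W$ of the porous medium equation whose trace at $x=0$ dominates $C(T-t)^{-1/(p-1)}$, which lies above $u_0$ at $t=0$, and which stays finite (indeed vanishes) far to the left. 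Comparison on $\{x<0\}$ then forces $u\le W$ there, and the geometry of the support of $W$ pins down $B(u)$.

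The natural candidate is the self-similar function
\begin{equation*}
W(x,t)=(T-t)^{-\alpha}F(\xi),\qquad \xi=x(T-t)^{\beta},\quad \alpha=\frac1{p-1},\quad \beta=\frac{m-p}{2(p-1)},
\end{equation*}
the exponents being forced by balancing $W_t$ and $(W^m)_{xx}$. A direct computation gives
\begin{equation*}
W_t-(W^m)_{xx}=(T-t)^{-\alpha-1}\bigl[\alpha F-\beta\xi F'-(F^m)''\bigr],
\end{equation*}
so taking $F$ to solve $(F^m)''+\beta\xi F'-\alpha F=0$ on $\xi<0$, truncated to zero past its first zero, produces an exact weak solution—hence a supersolution—of the porous medium equation, provided the interface is of the good Barenblatt type $(F^m)'=0$. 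The decisive point is the sign of $\beta$: for $p>m$ one has $\beta<0$, while for $p=m$ one has $\beta=0$.

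Next I would construct $F$. For $p=m$ (so $\beta=0$) the profile equation is the explicit autonomous $(F^m)''=\alpha F$; shooting from the interface yields a compactly supported, increasing profile with support $[-K_0,0]$ and the good behaviour $F^m\sim(x+K_0)^{2m/(m-1)}$. For $p>m$ the analogous compactly supported profile on $\xi<0$ is obtained by the same phase-plane/shooting analysis as in Section~\ref{sect-guprates} (after $\eta=-\xi$ the equation is of the type \eqref{GP}, but with the convection sign reversed, $\beta<0$). In either case the scaling $F_\lambda(\cdot)=\lambda^{-2/(m-1)}F(\lambda\,\cdot)$ preserves both the equation and the interface, so choosing $\lambda$ small makes $F_\lambda(0)$ and the width of the support as large as needed; this guarantees $W(0,t)\ge u(0,t)$ and $W(\cdot,0)\ge u_0$, using that $u_0$ is compactly supported. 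Comparison then gives $u\le W$ on $\{x<0\}$. Finally I read off the support of $W$, namely $\{-R(T-t)^{-\beta}<x<0\}$: when $p=m$ this is the fixed interval $(-K,0)$, so $u\equiv0$ and in particular $B(u)\subset[-K,\infty)$; when $p>m$ one has $-\beta>0$, so $R(T-t)^{-\beta}\to0$ and for each fixed $x_*<0$ we get $W(x_*,t)=0$ for $t$ near $T$, whence $x_*\notin B(u)$ and $B(u)\subset\mathbb R^+$.

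The main obstacle is the profile analysis when $p>m$: unlike the $p=m$ case, the convection term $\beta\xi F'$ with $\beta<0$ forces a careful ODE study to produce a compactly supported profile with the good interface $(F^m)'(-R)=0$. A softer alternative, which also covers the linear diffusion case $m=1$ (where finite propagation fails), is to use instead the positive decaying profile $F(\xi)\sim|\xi|^{-2/(p-m)}$; then $W$ stays merely bounded—rather than zero—on each compact subset of $\mathbb R^-$, which still excludes every $x_*<0$ from $B(u)$. A secondary point is justifying the comparison up to $t=T$: one compares on $\{x<0\}\times[0,T-\varepsilon]$ using the trace bound at $x=0$ and lets $\varepsilon\to0$, which is legitimate precisely because the blow-up rate controls $u(0,t)$ for all $t<T$.
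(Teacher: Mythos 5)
Your overall strategy is exactly the paper's: control $u(0,t)$ by the upper blow-up rate of Theorem~\ref{teo-buprates}, regard $u$ as a subsolution of the porous medium equation on $\{x<0\}$ with prescribed trace at $x=0$, and dominate it by a backward self-similar supersolution $W(x,t)=(T-t)^{-\alpha}F\bigl(x(T-t)^{\beta}\bigr)$. The case $p=m$ (where $\beta=0$) is also handled as in the paper: the autonomous profile equation $(F^m)''=\alpha F$ has a compactly supported solution with good interface (Lemma~\ref{lem-phi} with $\beta=0$; note $m=p>1$ here, so slow diffusion is automatic), and the scaling $F\mapsto CF(C^{(1-m)/2}\cdot)$ enlarges both the support and the value at the origin as needed.

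The genuine gap is your primary plan for $p>m$. There you ask for a compactly supported profile on $\xi<0$ with the Barenblatt interface $(F^m)'=0$, whose support in $x$ is $\bigl(-R(T-t)^{(p-m)/(2(p-1))},0\bigr)$ and therefore \emph{shrinks} to $\{0\}$ as $t\to T$. No such supersolution exists: with the good interface condition the truncated $W$ is an exact weak solution of the porous medium equation near its left interface, and the positivity set of a solution cannot contract. Concretely, let $v$ solve the porous medium equation on $\{x<0\}$ with data $W(\cdot,0)$ and $W(0,\cdot)$; the retention property gives $v(x_*,t)>0$ for all later $t$ once $v(x_*,0)>0$, while comparison with $W$ would force $v(x_*,t)=0$ for $t$ near $T$ — a contradiction. (Equivalently, your conclusion ``$u(x_*,t)=0$ for $t$ near $T$'' already contradicts retention for any $u$ positive at $x_*$ at some earlier time.) A bad interface $(F^m)'(-R^+)>0$ does not rescue the construction either: the truncation then carries a positive singular measure in $(W^m)_{xx}$ at the interface, making $W$ a subsolution there rather than a supersolution. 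So the compact-support route is not merely delicate for $p>m$; it is impossible. Your ``softer alternative'' — the positive profile with $F(\xi)\sim|\xi|^{-2/(p-m)}$ as $\xi\to-\infty$, which shows only that $W$, hence $u$, stays \emph{bounded} on compact subsets of $\mathbb{R}^-$ (still enough to exclude every $x_*<0$ from $B(u)$) — is in fact the only viable route, and it is precisely what the paper does: the profile is obtained as the unique phase-plane orbit entering $(-\alpha/\beta,\infty)$, borrowed from \cite{Rocky}. With that substitution, together with the explicit solution in the linear case $m=1$, your argument coincides with the paper's proof.
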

\begin{proof}
Notice that by the upper blow-up rate, $u$ is a subsolution to the problem on the left half-line
\begin{equation}\label{eq.pbw}
\left\{
\begin{array}{ll}
w_t=(w^m)_{xx}, \qquad & x<0,\,0<t<T,\\
w(0,t)=C (T-t)^{-1/(p-1)},\\
w(x,0)=w_0(x),
\end{array}\right.
\end{equation}
provided that $C$ is large. For $m=1$ we have and explicit formula for $w$, and it is easy to see that $w$ is bounded for $x<0$, see for instance \cite{SamarskiiBook87}. For $m\ne1$ we use comparison with a selfsimilar solution in the form
$$
W(x,t)=(T-t)^{-\alpha} F(x (T-t)^{-\beta}), \qquad \alpha=\frac1{p-1},\ \ \beta=\frac{p-m}2 \alpha,
$$
where the profile $F$ satisfies the equation
$$
(F^m)''-\beta \xi F-\alpha F=0.
$$
Observe that for $p=m$, i.e. $\beta=0$, this equation is the same as for problem \eqref{phi}, so by Lemma \ref{lem-phi} there exists a profile $F_1$ with compact support and satisfying $F_1(0)=1$. By scaling $F(\xi)=CF(C^{\frac{1-m}2}\xi)$ is also a solution, with large support if $C$ is large. We then take $C$ large so as to have that the corresponding solution $W$ satisfies $W(x,0)\ge u_0(x)$ and by comparison we obtain the bound of $B(u)$.

For the case $p>m$ ($\beta>0$), we introduce as in Seccion \ref{p=1<m} the variables
\begin{equation}\label{variable}
X=\frac{|\xi| g'}{g},\qquad Y=\frac1m \xi^2 g^{1-m},\qquad \eta=\log|\xi|,
\end{equation}
to obtain the differential system,
$$
\left\{
\begin{array}{l}
\dot{X}=X(1-mX)+Y(\alpha+\beta X),\\
\dot{Y}=Y(2-(m-1)X).
\end{array}\right.
$$
It is easy to see that all the orbits in the second quadrant start at the origin and have three posible behaviours: they cross the vertical axis; or the horizontal variable goes to $-\infty$; or  $(X,Y)\to (-\alpha/\beta,\infty)$. The existence of a unique orbit joining the origin with $(-\alpha/\beta,\infty)$ is given in \cite{Rocky} for $m<1$, but the argument works as well for $m>1$. From this orbit we obtain a positive, increasing profile $F_1$  such that $F_1(0)=1$ and $F_1(\xi)\sim |\xi|^{-\alpha/\beta}$ for $\xi\sim -\infty$. Notice that, for $x<0$ and $t$ near $T$, we have
$$
W_1(x,t)=(T-t)^{-\alpha} F_1(x (T-t)^{-\beta})\sim |x|^{-\alpha/\beta},
$$
that is $W$ is bounded. Rescaling and comparison as before implies the same property for our solution, $u$ is bounded for $x<0$.
\end{proof}

\begin{rem}\label{rem-p<m}
Notice that for $p<m$ we  have again  an equation like \eqref{phi} for the profile.  This give us a family of selfsimilar solutions
$$
W_C(x,t)=C (T-t)^{-\frac1{p-1}} F_1(C^{\frac{1-m}{2}}x (T-t)^{\frac{m-p}{2(p-1)}})
$$
with global blow-up.
\end{rem}

\begin{lema}
Let $u$ be a blow-up solution to \eqref{1.1} with compactly supported initial datum. Then $B(u)$ is bounded from the right provided $p\ge m>1$.
\end{lema}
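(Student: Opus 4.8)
The plan is to trap $u$, for $x$ to the right of $\operatorname{supp}(u_0)$, beneath a self-similar supersolution that blows up only on a bounded set. Fix $x_0>\sup\operatorname{supp}(u_0)$ (in particular $x_0>0$); on $\{x>x_0\}$ the coefficient $a$ equals $1$ and the equation in \eqref{1.1} is $u_t=(u^m)_{xx}+u^p$. I look for
\[
W(x,t)=(T-t)^{-\alpha}F(\eta),\qquad \eta=(x-x_0)(T-t)^{-\beta},\qquad \alpha=\tfrac1{p-1},\ \ \beta=\tfrac{p-m}2\,\alpha\ge0,
\]
which is an exact solution of $u_t=(u^m)_{xx}+u^p$ as soon as the profile solves
\[
(F^m)''-\beta\eta F'-\alpha F+F^p=0,\qquad \eta>0.
\]
What I need is a positive decreasing profile with $F(0)$ prescribed large and the correct decay: for $p>m$ the behaviour $F(\eta)\sim c\,\eta^{-\alpha/\beta}=c\,\eta^{-2/(p-m)}$, so that $W(x,t)\to c\,(x-x_0)^{-2/(p-m)}$ stays bounded for each fixed $x>x_0$ as $t\to T^-$; for $p=m$ (hence $\beta=0$) a compactly supported profile, so that $W(\cdot,t)$ is supported in a fixed bounded interval for every $t<T$.

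Granting such a profile, the conclusion is immediate by comparison on $\{x>x_0\}$. The choice of $x_0$ gives $u(x,0)=0\le W(x,0)$ there, while on the lateral boundary the blow-up rate of Theorem~\ref{teo-buprates} yields $u(x_0,t)\le\|u(\cdot,t)\|_\infty\le C(T-t)^{-\alpha}=W(x_0,t)$ once $F(0)\ge C$. Since $p>1$ the reaction is Lipschitz and comparison applies (with a good interface when $p=m$), so $u\le W$ on $\{x>x_0\}$. For $p>m$ this bounds $\limsup_{t\to T}u(x,t)\le c\,(x-x_0)^{-2/(p-m)}<\infty$ for every $x>x_0$, and for $p=m$ it forces $u\equiv0$ to the right of $x_0+R$, with $R$ the length of $\operatorname{supp}(F)$. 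In either case $B(u)$ is bounded from the right.

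The substance of the argument is therefore the profile. Following the phase-plane method of Section~\ref{p=1<m} I would pass to $X=\eta F'/F$, $Y=\frac1m\eta^2F^{1-m}$ and, to absorb the reaction, a third variable $Z=F^{p-1}$, obtaining
\[
\dot X=X(1-mX)+Y(\alpha+\beta X)-YZ,\qquad \dot Y=Y(2-(m-1)X),\qquad \dot Z=(p-1)XZ.
\]
The tail of the profile lives on the invariant plane $Z=0$, where the system is precisely the one met in the left-bound lemma and where the critical point $(-\alpha/\beta,\infty)$ supplies the decay $F\sim\eta^{-\alpha/\beta}$; the centre of the profile sits at $Z>0$, where the reaction acts, and one must produce the orbit joining the two. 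When $p=m$ we have $\beta=0$ and the problem collapses to the planar Hamiltonian $G''=\alpha G^{1/m}-G$ for $G=F^m$, whose homoclinic loop to the origin is the compactly supported regional profile. I expect the main obstacle to be exactly this connecting orbit with the prescribed decay --- the genuinely three-dimensional feature created by the reaction, the departure from and return to $Z=0$ being the new point. A secondary but real difficulty is reaching $F(0)\ge C$: a constant (or pure power) barrier is useless, since any constant supersolution is capped at $\alpha^{1/(p-1)}$, so one truly needs a profile with $(F^m)''(0)<0$; for $p>m$ the scaling of the equation provides the freedom, whereas for $p=m$ the regional profile has a fixed maximal height and one must invoke the sharp form of the blow-up constant to dominate the boundary data.
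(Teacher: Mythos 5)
Your overall strategy is different from the paper's, and it contains a genuine gap that you yourself flag but do not close: the existence of the self-similar barrier profiles. The paper disposes of this lemma in three lines by observing that near the right-hand free boundary the equation coincides with the global-reaction equation $u_t=(u^m)_{xx}+u^p$, for which the localization of the support of blow-up solutions when $p\ge m>1$ is known (\cite{SamarskiiBook87}, proved by intersection comparison with self-similar profiles \emph{in a neighbourhood of the interface}); since $m>1$ gives finite speed of propagation, the support of $u(\cdot,t)$ stays in a fixed bounded set up to time $T$ and a fortiori $B(u)$ is bounded from the right. Note that the hypothesis $m>1$ is what makes this work, and your argument never uses it --- a warning sign.

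The concrete obstructions in your construction are the following. For $p=m$ the regional profile (the homoclinic loop of $G''=\alpha G^{1/m}-G$) has a \emph{fixed} maximal height, and the profile equation has no scaling invariance that raises it (replacing $F$ by $CF(C^{(1-m)/2}\cdot)$ changes the coefficient of the reaction term), so you cannot arrange $F(0)\ge C$ for the unknown constant $C$ in the upper blow-up estimate; the comparison on the lateral boundary $x=x_0$ therefore does not close. For $p>m$ the situation is worse: the terms $(F^m)''$, $\alpha F$ and $F^p$ scale with three different powers, so again there is no one-parameter family of decaying profiles with arbitrarily large $F(0)$; for large $F(0)$ the shooting is dominated near $\eta=0$ by $(F^m)''\approx -F^p$ and the profile typically crosses zero with $(F^m)'<0$, which is useless as a supersolution. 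Moreover, at the endpoint $m=1$ of the admissible range the Giga--Kohn theorem excludes any nonconstant bounded positive backward self-similar profile in one dimension, so the object you need simply fails to exist there and its existence for $m$ close to $1$ is at best delicate. In short, the ``main obstacle'' you identify is the entire content of the proof in your approach, and it is not merely technical; the paper's route through localization of the interface (available precisely because $m>1$) avoids it altogether.
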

\begin{proof}
We only have to notice that  the support of the blow-up solutions to the equation with global reaction,
$a(x)=1$,
are bounded if $p\ge m>1$, see \cite{SamarskiiBook87}. The proof of this result uses the intersection comparison technique with self-similar profiles in a neighborhood of the free boundary. Then the same result holds for our equation near the right-hand free boundary.
\end{proof}

\begin{lema}
Let $u$ be a blow-up solution to \eqref{1.1} with $p<m$ and a compactly supported initial datum. Assume also that there exists $x_0\in\mathbb R$ satisfying \eqref{x0}. Then $B(u)=\mathbb R$.
\end{lema}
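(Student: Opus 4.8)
The plan is to show that the blow-up at $x_0$ provided by \eqref{x0} is carried by the diffusion to every point of the line. Since $a\ge0$, every solution $u$ of \eqref{1.1} is a supersolution of the pure porous medium equation $w_t=(w^m)_{xx}$, so it suffices to trap from below a subsolution of \emph{this} equation that blows up everywhere. I will treat the two half-lines $\{x<x_0\}$ and $\{x>x_0\}$ separately, the first being the delicate one because there the reaction gives no help; the point $x_0$ itself is already in $B(u)$ by \eqref{x0}.

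On $\{x<x_0\}$ I look for a self-similar subsolution
$$
\underline w(x,t)=C(T-t)^{-\alpha}H\big(C^{\frac{1-m}2}(x_0-x)(T-t)^{\sigma}\big),\qquad \alpha=\tfrac1{p-1},\ \ \sigma=\tfrac{m-p}{2(p-1)}>0,
$$
where $H$ solves the same profile ODE as in \eqref{phi} (the one of Remark~\ref{rem-p<m}). By Lemma~\ref{lem-phi} this ODE has a positive, decreasing, compactly supported profile with $H(0)>0$ and $H(\zeta_0)=(H^m)'(\zeta_0)=0$, so its truncation by zero is a genuine weak solution of $w_t=(w^m)_{xx}$ on $\{x<x_0\}$, hence a subsolution of \eqref{1.1}. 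Two features are decisive. First, $\underline w$ attains its amplitude at the right endpoint, $\underline w(x_0,t)=C\,H(0)\,(T-t)^{-\alpha}$, so taking $C\le c/H(0)$ and using \eqref{x0} gives $\underline w(x_0,t)\le u(x_0,t)$ for $t$ near $T$. Second, for every fixed $x<x_0$ the argument of $H$ tends to $0$ as $t\to T^-$, so $\underline w(x,t)\to C\,H(0)\,(T-t)^{-\alpha}=\infty$; thus a single $\underline w$ blows up at every point of $(-\infty,x_0)$.

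It remains to slide $\underline w$ under $u$ at one time. Fix $t_1<T$; since $u$ blows up at $x_0$ it is positive and continuous near $x_0$ at time $t_1$, say $u(\cdot,t_1)\ge\varepsilon>0$ on $|x-x_0|\le\delta$. For $m\ge1$ the support of $\underline w(\cdot,t_1)$ is $[\,x_0-\zeta_0C^{(m-1)/2}(T-t_1)^{-\sigma},\,x_0\,]$ and its height is $C\,H(0)\,(T-t_1)^{-\alpha}$, both of which shrink as $C\to0$; hence for $C$ small $\underline w(\cdot,t_1)$ is a tiny bump inside $\{|x-x_0|\le\delta\}$ of height below $\varepsilon$, so $\underline w(\cdot,t_1)\le u(\cdot,t_1)$. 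The comparison principle for $w_t=(w^m)_{xx}$ on $\{x<x_0\}\times(t_1,T)$ — with the ordering on $x=x_0$ from \eqref{x0}, on the moving left interface from $\underline w=0\le u$, and at $t=t_1$ just established — yields $u\ge\underline w$, whence $u$ blows up at every $x<x_0$. The symmetric construction on $\{x>x_0\}$, where moreover the reaction only helps, gives blow-up there too, and therefore $B(u)=\mathbb R$.

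The main obstacle is precisely the matching of the initial data. For $p<m$ the natural self-similar blow-up profiles grow like $|\xi|^{2/(m-p)}$ at infinity, so the whole-line solutions $W_C$ of Remark~\ref{rem-p<m} can never be placed below the compactly supported $u$; this is why the comparison must be organised as a one-sided problem anchored at $x_0$ through \eqref{x0}, using a profile with compact support so that the far boundary contributes nothing. The same growth makes the fast-diffusion range $m<1$ more delicate: there $C^{(m-1)/2}\to\infty$ as $C\to0$ and compactly supported profiles are unavailable, so one instead exploits the strict positivity of $u$ (infinite speed of propagation) together with a profile decaying at the correct polynomial rate to run the identical one-sided comparison.
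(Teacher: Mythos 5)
Your argument is correct and follows essentially the paper's own route: both anchor a one-sided comparison at $x_0$ via \eqref{x0} and slide beneath $u$ a compactly supported self-similar solution of the pure porous medium equation with global blow-up (the $W_C$ of Remark~\ref{rem-p<m}); your version is, if anything, more careful about the initial-time ordering, which the paper glosses over for compactly supported $u_0$. The only superfluous part is your closing concern about $m\le 1$: since blow-up requires $p>1$, the hypothesis $p<m$ already forces $m>1$, so the fast-diffusion case never arises here.
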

\begin{proof}
Thanks to the hypothesis at $x_0$ we have that $u$ is a supersolution to the problem defined on the left of $x_0$,
$$
\left\{
\begin{array}{ll}
w_t=(w^m)_{xx}, \qquad & x<x_0,\,0<t<T,\\
w(x_0,t)=C_1 (T-t)^{-1/(p-1)},\\
w(x,0)=w_0(x),
\end{array}\right.
$$
as well as  to the problem on the right, $x>x_0$. Moreover, the self-similar solution given in Remark \ref{rem-p<m} is a subsolution if we choose $C$ small enough such that $W_C(x,0)<u_0(x)$. Comparison ends the proof.
\end{proof}
The same argument allows to prove that if $p=m$ the blow-up set contains some nontrivial interval, thus concluding the proof of Theorem~\ref{teo-bupsets}.

To finish this section we remark that, in the range $p<m$, but without the hypothesis of the existence of $x_0$, it is easy to see that $B(u)$ is unbounded at least from the right. In fact if we assume that at some point $u(x_1,t)<M$, then $u$ is a subsolution to
$$
\left\{
\begin{array}{ll}
z_t=(z^m)_{xx}+a(x) z^p, & x<x_1,\, t>0,\\
z(x_1,t)=M, \\
z(x,0)=z_0(x).
\end{array}\right.
$$
On the other hand, the stationary solution of the equation with $z(0)=K$ and $z'(0)=0$, is  supersolution of the problem if $K$ is large enough. Then by comparison $u$ must be bounded in $(-\infty,x_1)$. Therefore if $u$ is bounded in some interval $(x_1,\infty)$ then it cannot blow up.



\end{document}